\newcommand{\blind}{1}
\DeclareMathOperator{\E}{E}
\DeclareMathOperator{\Var}{Var}
\DeclareMathOperator{\Cov}{Cov}
\newtheorem{theorem}{Theorem}[section]
\newtheorem{definition}[theorem]{Definition}
\newtheorem{assumption}[theorem]{Assumption}
\newtheorem{proposition}[theorem]{Proposition}
\newtheorem{lemma}[theorem]{Lemma}
\newtheorem{corollary}[theorem]{Corollary}
\newtheorem{remark}[theorem]{Remark}
\newtheorem{example}[theorem]{Example}
\newcommand \ens[1]{\left\{ #1\right\}}
\newcommand \R{\mathbb R}
\newcommand \PP{\mathbb P}
 \newcommand{\Es}[1]{\mathbb E\left[#1\right]}
\newcommand \Z{\mathbb Z}
\newcommand \abs[1]{\left|#1\right|}
\newcommand{\pr}[1]{\left(#1\right)}
\newcommand{\1}{\mathbf{1}}
\newcommand{\e}{{\operatorname{e}}}
\newcommand{\pe}[1]{\left\lfloor #1 \right\rfloor}
\renewcommand{\leq}{\leqslant}
\renewcommand{\geq}{\geqslant}
\newcommand*{\defeq}{\mathrel{\rlap{%
                     \raisebox{0.3ex}{$\m@th\cdot$}}%
                     \raisebox{-0.3ex}{$\m@th\cdot$}}%
                     =}
\newcommand*{\eqdef}{=\mathrel{\rlap{%
                     \raisebox{0.3ex}{$\m@th\cdot$}}%
                     \raisebox{-0.3ex}{$\m@th\cdot$}}%
                     }
\begin{document}

\def\spacingset#1{\renewcommand{\baselinestretch}%
{#1}\small\normalsize} \spacingset{1}


\if1\blind
{
  \title{\bf   Change-point tests for the tail parameter of
 Long Memory Stochastic Volatility time series}
  \author{Annika Betken\thanks{Research supported by the German National Academic Foundation and Collaborative Research Center SFB 823 {\em Statistical modelling of nonlinear dynamic processes}}\hspace{.2cm}\\
    Faculty of Mathematics,
Ruhr-Universit\"at Bochum\\
    and \\
    Davide Giraudo\footnotemark[1]\hspace{.2cm}\\
    Faculty of Mathematics,
Ruhr-Universit\"at Bochum\\
    and \\
  Rafa{\l} Kulik\\
Department of Mathematics and Statistics,
University of Ottawa\\
}
  \maketitle
} \fi

\if0\blind
{
  \bigskip
  \bigskip
  \bigskip
  \begin{center}
    {\LARGE\bf Change-point tests for the tail parameter of
 Long Memory Stochastic Volatility time series}
\end{center}
  \medskip
} \fi

\bigskip
 \begin{abstract}
We consider a change-point test based on the Hill estimator to test for structural changes in the tail index of Long Memory Stochastic Volatility time series.
In order to determine the asymptotic distribution of the corresponding test statistic, we prove  a uniform
reduction principle for the tail empirical process
in a two-parameter Skorohod space.  It is shown that such a process displays a dichotomous behavior according to an interplay between the Hurst parameter, i.e., a parameter characterizing the dependence in the data, and the tail index.
Our theoretical results are accompanied by simulation studies
and the  analysis of  financial time series with regard to  structural changes in the tail index.
 \end{abstract}

\noindent%
{\it Keywords:}  stochastic volatility; long-range dependence; change-point tests; tail empirical process;  heavy tails; chaining
\vfill

\newpage
\spacingset{1.5} 

\section{Introduction and motivation}

The tail behavior of the marginal distribution 
of time series  is of major relevance for statistics in applied  sciences such as econometrics and hydrology, where heavy-tailed data  occurs frequently.
More precisely,  time series from finance such as the log returns of exchange rates and  stock market indices display heavy tails; see \cite{mandelbrot:1963}.
Furthermore, drastic events like the financial crisis in 2008 substantiate the   importance of studying time series models that underlie financial data.
Against this background, the identification of changes in the tail behavior of data-generating stochastic processes, 
that result in an increase or decrease in the probability of extreme events, is of utmost interest. In particular, the analysis of the tail behavior of financial  data 
 may pave the way for a corresponding adjustment of risk management for capital investments
and, therefore, prevent from huge capital losses. 
Indeed, there is empirical evidence that 
the tail behavior of financial time series may change over time: \cite{quintos:2001} identify changes in the tail of Asian stock market indices,  \cite{galbraith:zernov:2004} find evidence for changes in the tail behavior of returns on U.S. equities, and \cite{werner:upper:2004} detect structural breaks in high-frequency data of Bund future returns.

\subsection{Tail index estimation and change-point problem}

  Let  $X_j$, $j\in \mathbb{N}$, be a stationary time series whose marginal tail distribution function $\bar{F}$ is regularly varying with index $-\alpha$, $\alpha>0$, i.e., 
$\PP\pr{X>x}=x^{-\alpha}L(x)$, where $L$ is slowly varying at infinity. 
Since the tail behavior of $X_j$, $j\in \mathbb{N}$, 
is primarily determined by 
 the value of the tail index $\alpha$, identifying a change in the tail of data-generating processes corresponds to testing for a change-point in this parameter.
 
In particular, this means that,   
given a set of observations $X_1, \ldots, X_n$ with
$\PP\pr{X_j>x}=x^{-\alpha_j}L(x)$, $j=1,\ldots,n$,
we aim at deciding on the testing problem $\pr{H, A}$ with
\begin{align*}
H: \ &\alpha_1=\cdots=\alpha_n
\intertext{and}
A: \
&\alpha_1=\cdots=\alpha_{k}\not=\alpha_{k+1}=\cdots=\alpha_n\\
&\text{for some } \ k\in\left\{1, \ldots, n-1\right\}.
\end{align*}

Test statistics that are designed for identifying structural changes in the tail index are naturally derived from an estimation of the tail  index $\alpha$. For some general results on tail index estimation see \cite{drees:1998a} and \cite{drees:1998b}.  
In this article, we focus on two estimators that are motivated by the fact that for a random variable $X$ with tail index $\alpha$
\begin{equation*}
\lim\limits_{u\rightarrow \infty}\Es{\log\left(\frac{X}{u}\right)\left|\right. X>u}=
\lim\limits_{u\rightarrow \infty}\frac{\Es{\log\left(\frac{X}{u}\right)\1\ens{ X>u }}}{\PP\pr{X>u}}
=\frac{1}{\alpha}\eqdef\gamma.
\end{equation*}
When we are given  a set of  observations $X_1, \ldots, X_n$, an approximation of the unknown distribution of $X$  by its empirical analogue gives the following  estimator for the tail index:
\begin{equation}\label{eq:tail_index_estimate}
\widehat{\gamma}\defeq\frac{1}{\sum_{j=1}^n\1\ens{ X_j>u_n}}\sum_{j=1}^n\log\left(\frac{X_j}{u_n}\right)\1\ens{X_j>u_n}\;,
\end{equation}
where $u_n$, $n\in \mathbb{N}$, is a sequence with $u_n\to\infty$ and $n\bar{F}\pr{u_n}\to\infty$.
Replacing the deterministic levels $u_n$ in the formula for $\widehat{\gamma}$ by $X_{n:n-k_n}$ for some $k_n$, $1\leq k_n\leq n-1$, where $X_{n:n}\geq X_{n:n-1}\geq \ldots \geq X_{n:1}$ are the  order statistics of the sample $X_1, \ldots, X_n$, yields the Hill estimator
\begin{equation*}
\widehat{\gamma}_{\rm Hill}=\frac{1}{k_n}\sum\limits_{i=1}^{k_n}\log \left(\frac{X_{n:n-i+1}}{X_{n:n-k_n}}\right).
\end{equation*}
As the most popular  estimator for the tail index, established in \cite{hill:1975},
the Hill estimator has been widely studied in the literature.
Its limiting distribution was obtained under various model assumptions,  including linear processes (\cite{resnick:starica:1997}),  $\beta$-mixing processes (\cite{drees:2000}), and Long Memory Stochastic Volatility models
(\cite{kulik:soulier:2011}).
The first article that establishes a theory for change-point tests that are based on the Hill estimator seems to be \cite{quintos:2001}.
While \cite{quintos:2001} consider independent, identically distributed observations, $\text{ARCH}$- and $\text{GARCH}$-type processes,
\cite{kim:lee:2011} and \cite{kim:lee:2012} extend their results to 
$\beta$-mixing processes and residual-based change-point tests for $\text{AR}(p)$ processes with heavy-tailed innovations.
In contrast, we study change-point tests for the tail index of Long Memory Stochastic Volatility time series 
based on the two estimators $\widehat{\gamma}$ and $\widehat{\gamma}_{\rm Hill}$. 
In fact, our results are the first to consider the change-point problem for stochastic volatility models and time series with long-range dependence.

To motivate the design of  test statistics for deciding on the change-point problem $\pr{H, A}$, we temporarily assume that
the change-point location is known, i.e., for a given $k\in \left\{1, \ldots, n-1\right\}$ we consider the testing problem $\pr{H, A_k}$ with
\begin{align*}
A_k: \
&\alpha_1=\cdots=\alpha_{k}\not=\alpha_{k+1}=\cdots=\alpha_n.
\end{align*}
For  this testing problem, 
change-point tests have been considered
in 
\cite{phillips:loretan:1990} 
and \cite{koedijk:schafgans:devries:1990}. 
In order to decide on $\pr{H, A_k}$, we compare an estimation $\widehat{\gamma}_k$ of the tail index based on the
observations $X_1, \ldots, X_k$ to an estimation  $\widehat{\gamma}_n$ of the tail index based on the whole sample $X_1, \ldots, X_n$.
This idea leads to studying the following test statistic
\begin{align*}
\Gamma_{k, n}=\frac{k}{n}\left|\frac{\widehat{\gamma}_{k}}{\widehat{\gamma}_{n}}-1\right|.
\end{align*}

Under the assumption that the change-point location is unknown under the alternative,
it seems natural to
consider the statistic $\Gamma_{k,n}$   for every potential change-point location $k$
and to decide in favor of the alternative hypothesis $A$ if the maximum of its values exceeds a predefined threshold.
As a result, a change-point test for the testing problem $\pr{H, A}$ that rests upon the estimator $\widehat{\gamma}$ defined by \eqref{eq:tail_index_estimate} bases test decisions on the values of the statistic
\begin{equation}\label{eq:defi_of_test_statistic}
\Gamma_n\defeq\sup\limits_{t\in [t_0,1]}t\left|\frac{\widehat{\gamma}_{\lfloor nt\rfloor}}{\widehat{\gamma}_n}-1\right|
\end{equation}
with  $t_0\in (0,1)$ and with the sequential version of $\widehat{\gamma}$ defined by
\begin{align}\label{eq:estimator}
\widehat{\gamma}_{\lfloor nt\rfloor}\defeq\frac{1}{\sum_{j=1}^{\lfloor nt\rfloor}
\1\{X_j>u_n\}}\sum_{j=1}^{\pe{nt}}\log\left(\frac{X_j}{u_n}\right)\1\{X_j>u_n\}.
\end{align}

Likewise, a test statistic based on the Hill estimator is given by
\begin{align*}
\widetilde \Gamma_n\defeq\sup\limits_{t\in [t_0, 1]}t\left|\frac{\widehat{\gamma}_{\rm Hill}(t)}{\widehat{\gamma}_{\rm Hill}(1)}-1\right|\;
\end{align*}
with the sequential version of $\widehat{\gamma}_{\rm Hill}$ defined by
\begin{equation*}
\widehat{\gamma}_{\rm Hill}(t)\defeq\frac{1}{\lfloor k_nt\rfloor}\sum\limits_{i=1}^{\lfloor k_nt\rfloor}\log \left(\frac{X_{\lfloor nt \rfloor:\lfloor nt\rfloor-i+1}}{X_{\lfloor nt\rfloor :\lfloor nt\rfloor-k_{\lfloor nt\rfloor}}}\right).
\end{equation*}

In this context, the most comprehensive  theory for change-point tests   is presented in \cite{hoga:2017}. The author considers a number of  test statistics based on different tail index estimators  and derives their
 asymptotic distributions under the assumption of $\beta$-mixing data generating processes.

In the following, we derive the asymptotic distribution of both estimators, i.e., $\widehat{\gamma}_{\lfloor nt\rfloor}$ and $\widehat{\gamma}_{\text{Hill}}(t)$, and the corresponding tests statistics, i.e., $\Gamma_n$ and $\widetilde \Gamma_n$, under the hypothesis of stationary time series data. For this purpose, we first prove a limit theorem for the tail empirical process of Long Memory Stochastic Volatility time series  in two parameters. This limit theorem does not necessarily relate to the change-point context. It can therefore be considered of independent interest and, thus, as the main theoretical result of our work.
Our theoretical results are accompanied by simulation studies. 
As an empirical application of our tests, we consider Standard \& Poor's 500 daily closing index  covering the period from January 2008 to December 2008, the year of the financial crisis. We identify a change in the data at exactly one day after Lehman Brothers filed for bankruptcy protection, an event which is thought to have played a major role in the unfolding of the crisis in 2007 –- 2008.

\subsection{Tail empirical process}

In order to derive the limit distribution of the tail estimators $\widehat{\gamma}_{\lfloor nt\rfloor}$ and $\widehat{\gamma}_{\rm Hill}(t)$, parametrized in $t$, and the corresponding test statistics $\Gamma_n$ and  $\widetilde \Gamma_n$, it is crucial to note that
\begin{align}
\widehat{\gamma}_{\lfloor nt\rfloor}=\frac{1}{\sum_{j=1}^{\lfloor nt\rfloor}
\1\{X_j>u_n\}}\sum_{j=1}^{\pe{nt}}\log\left(\frac{X_j}{u_n}\right)\1\{X_j>u_n\} =
\frac{1}{\widetilde{T}_n(1,t)}
\int_1^{\infty}s^{-1}\widetilde{T}_n(s,t)ds\;,
\end{align}
where
\begin{align*}
&\widetilde{T}_n(s,t)=\frac{1}{n\bar{F}(u_n)}\sum\limits_{j=1}^{\pe{nt}}\1\left\{X_j> u_ns\right\}.
\end{align*}
As a result, asymptotics of the considered statistics can be derived from a limit theorem for 
 the two-parameter tail empirical process 
\begin{equation}\label{eq:definition_empirical_processs}
e_n(s,t)\defeq\left\{\widetilde{T}_n(s,t)-T(s,t)\right\}, \ s\in [	1, \infty], \ t\in [0, 1],
\end{equation}
where $T\pr{s,t}$ does not correspond to the mean of $\widetilde{T}_n(s,t)$, but rather to the limit of that mean, i.e., to
\begin{equation}\label{eq:definition:T}
T\pr{s,t}\defeq ts^{-\alpha}.
\end{equation}

Among others,  the  tail empirical process in one parameter, i.e., $e_n(s, 1)$, $s\in [1, \infty]$,  has previously been studied in \cite{mason:1988}, \cite{einmahl:1990}, and \cite{einmahl:1992} for independent, identically distributed  observations, in \cite{rootzen:2009} for absolutely regular processes, and in
\cite{kulik:soulier:2011} for Long Memory Stochastic Volatility time series.
For the latter, the convergence of the two-parameter tail empirical process will be discussed in Section~~\ref{subsec:conv_empirical_process}.

\subsection{Long Memory Stochastic Volatility model}

A phenomenon that is often encountered in the context of financial time series corresponds to the observation that the observations seem to be uncorrelated, whereas their absolute values or higher moments  tend to be highly correlated.
Another characteristic of financial time series is the occurrence of heavy tails. In particular, the distribution of the considered data often exhibits tails that are heavier than those of a normal distribution.
The previously described features of financial data can be covered by stochastic volatility models.

\subsubsection*{Stochastic volatility model}

The Long Memory Stochastic Volatility  model that is taken as a basis of the theoretical results established in this article can be considered as a  generalization of  stochastic volatility
 models considered, for example, in \cite{taylor:1986}.
Initially, this  model had been introduced by \cite{breidt:crato:delima:1998} and, independently, by \cite{harvey:2002}.
 An overview of stochastic volatility models with long-range dependence and their basic properties  is given in \cite{deo:2006}  and in \cite{hurvich:soulier:2009}.

 Stochastic volatility time series $X_j$, $j\in\mathbb{N}$, are typically defined via
\begin{align}\label{eq: LMSV}
X_j=Z_j\varepsilon_j \ \ \text{with} \ \ Z_j=\exp\left(\frac{1}{2}Y_j\right),
\end{align}
where
$\varepsilon_j$, $j\in \mathbb{N}$, is a sequence of  independent, identically distributed random variables with mean $0$, and
$Y_j$, $j\in \mathbb{N}$, is a  Gaussian process, independent of $\varepsilon_j$, $j\in \mathbb{N}$.

While these models are often restricted to modeling a relatively fast decay of dependence in $Y_j$, $j\in \mathbb{N}$,  the so-called Long Memory Stochastic Volatility  model allows for long-range dependence.
In what follows, we will specify a corresponding dependence structure for $Y_j$, $j\in \mathbb{N}$.

\subsubsection*{Subordinated Gaussian processes}

The rate of decay  of the autocovariance function is crucial to the definition of  long-range dependence in time series.
\begin{definition}
A  (second-order) stationary, real-valued time series $Y_j$, $j\in \mathbb{Z}$, is called long-range dependent if its autocovariance function $\gamma$ satisfies
\begin{align*}
\gamma_Y(k)\defeq\Cov\pr{Y_1, Y_{k+1}}\sim k^{-D}L_{\gamma}(k), \ \ \text{as $k\rightarrow \infty$,}
\end{align*}
with $D\in \left(0, 1\right)$ for  some slowly varying function $L_{\gamma}$.  We refer to $D$ as  long-range dependence (LRD) parameter; see \cite{pipiras:taqqu:2017}, p. 17.
\end{definition}

We will focus our considerations on long-range dependent subordinated Gaussian time series.
\begin{definition}\label{def_sub_gauss}
Let $Y_j$, $j\in \mathbb{N}$, be a Gaussian process.
A process $Z_j$, $j\in \mathbb{N}$, satisfying $Z_j=G\pr{Y_j}$ for some measurable function $G\colon\mathbb{R}\longrightarrow \mathbb{R}$ is called  subordinated Gaussian process.
\end{definition}

\begin{remark}{\rm
For any particular distribution function $F$,
an appropriate choice of the transformation $G$ in Definition \ref{def_sub_gauss}
yields a subordinated Gaussian process with marginal distribution $F$.
Moreover, there exist algorithms for generating Gaussian processes that,
after suitable  transformation, yield subordinated Gaussian processes with marginal distribution $F$ and a predefined covariance structure;  see \cite{pipiras:taqqu:2017}, Section 5.8.4. To that effect, subordinated Gaussian processes constitute  a comprehensive  model for long-range dependent time series.
}
\end{remark}

The subordinated random variables $Z_j=G\pr{Y_j}$, $j\in \mathbb{N}$, can be considered as elements of the Hilbert space
$L^2\defeq L^2\pr{\mathbb{R}, \varphi(x)dx}$, i.e., the space of all measurable, real-valued  functions which are square-integrable with respect to the measure $\varphi(x)dx$ associated with the standard normal density function $\varphi$, equipped with the inner product 
\begin{align*}
\langle G_1, G_2 \rangle_{L^2}\defeq\displaystyle\int_{-\infty}^{\infty}G_1(x)G_2(x)\varphi(x)dx=\Es{G_1(Y)G_2(Y)},
\end{align*}
where  $G_1, G_2 \in L^2(\mathbb{R}, \varphi(x)dx)$ and $Y$ denotes a standard normally distributed random variable.
In order to characterize the dependence structure of subordinated Gaussian processes, we consider their expansion  in
Hermite polynomials.
 
\begin{definition}
For $n\geq 0$, the  Hermite polynomial of order $n$ is defined by
\begin{align*}
H_n(x)=(-1)^{n}\e^{\frac{1}{2}x^2}\frac{d^n}{d x^n}\e^{-\frac{1}{2}x^2}, \ x\in \mathbb{R}.
\end{align*}
\end{definition}

The sequence of Hermite polynomials constitutes an orthogonal basis of $L^2$.
In particular, it holds that
\begin{align*}
\langle H_n, H_m \rangle_{L^2}
=\begin{cases}
n! \ \ &\text{if $n=m$,}\\
0 \  \ &\text{if $n\neq m$.}
\end{cases}
\end{align*}
As a result,  every $G\in L^2(\mathbb{R}, \varphi(x)dx)$
has an expansion in Hermite polynomials, i.e., for $G\in L^2(\mathbb{R}, \varphi(x)dx)$ and $Y$ standard normally distributed, we have
\begin{align}\label{eq:Hermite_expansion}
G(Y)\overset{L^2}{=}\sum\limits_{r=0}^{\infty}\frac{J_r(G)}{r!}H_r(Y), \ \text{i.e.}, \ \lim\limits_{n\rightarrow \infty}\left\|G(Y)-\sum\limits_{r=0}^{n}\frac{J_r(G)}{r!}H_r(Y)\right \|_{L^2}
= 0,
\end{align}
where $\|\cdot\|_{L^2}$ denotes the norm induced by the inner product $\langle \cdot, \cdot\rangle_{L^2}$, and the so-called {\em Hermite coefficients} $J_r(G)$, $ r\geq 1$, are given by
\begin{align*}
J_r(G): =\langle G, H_r\rangle_{L^2}=\E G(Y)H_r(Y), \ r\geq 1.
\end{align*}

Given the Hermite expansion
\eqref{eq:Hermite_expansion}, it is possible to characterize the dependence structure of subordinated Gaussian time series  $G(Y_j)$, $j\in \mathbb{N}$.
Under the assumption that the  Gaussian sequence  $Y_j$,  $j\in \mathbb{N}$, is stationary and that $G$ is a one-to-one function,  the behavior of the autocorrelations of the transformed process is completely determined by the dependence structure of the underlying process.  However, this is not the case in general.
In fact, it holds that
\begin{align}\label{eq:cov_sub_Gaussian}
\Cov\pr{G(Y_1), G(Y_{k+1})}=\sum\limits_{r=1}^{\infty}\frac{J^{2}_r(G)}{r!}\left(\gamma_Y(k)\right)^{r},
\end{align}
where $\gamma_Y(k)$ denotes the autocovariance function of $Y_n$, $n\in \mathbb{N}$; see \cite{pipiras:taqqu:2017}.

 Under the assumption that, as $k$ tends to $\infty$,  $\gamma_Y(k)$ converges to $0$ with a certain rate, the asymptotically dominating term in the series \eqref{eq:cov_sub_Gaussian} is the summand corresponding to the smallest integer $r$ for which the Hermite coefficient $J_r(G)$ is non-zero. This index, which decisively depends on $G$, is called  Hermite rank.

\begin{definition}
Let $G\in L^2(\mathbb{R}, \varphi(x)dx)$, $\Es{G(Y)}=0$ for standard normally distributed $Y$ and let $J_r(G)$, $r\geq 1$, be the Hermite coefficients in the Hermite expansion of $G$. The smallest index $k\geq 1$ for which $J_k(G)\neq 0$ is called the {\em Hermite rank} of $G$, i.e.,
\begin{align*}
r\defeq \min\left\{k\geq 1: J_k(G)\neq 0\right\}.
\end{align*}
\end{definition}

It follows from  \eqref{eq:cov_sub_Gaussian} that subordination of long-range dependent Gaussian time series  potentially generates time series whose autocovariances  decay faster than the autocovariances of the underlying Gaussian process. In some cases, the subordinated time series is long-range dependent as well, in other cases subordination may even yield short-range dependence.
Given that
  $\Cov(Y_1, Y_{k+1})\sim k^{-D}L(k)$, as $k\rightarrow\infty$,
and given that $G\in  L^2(\mathbb{R}, \varphi(x)dx)$ is a function with Hermite rank $r$, we have
\begin{align*}
\Cov(G(Y_1), G(Y_{k+1}))\sim J_r^2(G)r!k^{-Dr}L_\gamma^r(k), \ \ \text{as $k\rightarrow \infty$.}
\end{align*}

It immediately follows  that subordinated Gaussian time series  $G(Y_j)$, $j\in\mathbb{N}$, are long-range dependent  with LRD parameter $D_G\defeq Dr$ and slowly-varying function $L_G(k)=J_r^2(G)r!L_\gamma^r(k)$ whenever $Dr<1$.

Given the previous definitions, we specify  model assumptions  that are taken as a basis for the results in the following sections.

\begin{definition}\label{model:LMSV}
Let the data generating process $X_j$, $j\in\mathbb{N}$, satisfy
\begin{align*}
X_j=Z_j\varepsilon_j, \ \ j\in \mathbb{N},
\end{align*}
where
$\varepsilon_j$, $j\in \mathbb{N}$, is a sequence of independent, identically distributed random variables with mean $0$, and
$Z_j$, $j\in \mathbb{N}$, is a long-range dependent subordinated Gaussian process  with $Z_j=\sigma(Y_j)$, $j\in \mathbb{N}$, for some stationary, long-range dependent Gaussian process $Y_j$, $j\in \mathbb{N}$, with LRD parameter $D$ and a non-negative measurable function $\sigma$ (not equal to $0$).
More precisely, assume that
$Y_j$, $j\in \mathbb{N}$, admits a linear representation with respect to an  independent, standard normally distributed sequence $\eta_k$, $k\in \mathbb{Z}$,  i.e.,
\begin{align*}
Y_j=\sum_{k=1}^{\infty}c_k\eta_{j-k},\ \ j\in \mathbb{N},
\end{align*}
with $\sum_{k=1}^\infty c_k^2=1$. Furthermore, suppose that $(\varepsilon_j,\eta_j)$, $j\in \mathbb{Z}$,
is a sequence of independent, identically distributed random 	vectors.
A sequence of random variables $X_j$, $j\in \mathbb{N}$, which satisfies the previous assumption is called a Long Memory Stochastic Volatility (LMSV) time series.
\end{definition}

\begin{remark}{\rm
The model assumptions generalize the preceding concepts of    stochastic volatility models with long-range dependence  by allowing for general subordinated Gaussian sequences $Z_j$,  $j\in \mathbb{N}$, and dependence between $Y_j$, $j\in \mathbb{N}$, and $\varepsilon_j$, $j\in \mathbb{N}$. Instead of claiming mutual independence of  $Y_j$,  $j\in \mathbb{N}$, and $\varepsilon_j$, $j\in \mathbb{N}$, the sequence of  random vectors ${\left(\eta_j, \varepsilon_j\right)}$  is assumed to be independent.
In particular, this implies that for a fixed index
$j$, the random variables  $Y_j$ and $\varepsilon_j$ are independent, while $Y_j$ may depend on  $\varepsilon_i$, $i<j$. In many cases, an LMSV model incorporating this dependence structure is   referred to as {\em LMSV with leverage}, as it allows for so-called {\em leverage effects} in financial time series. 
Not taking account of leverage,
Definition \ref{model:LMSV} corresponds to  the  LMSV model considered in \cite{kulik:soulier:2011}, while a similar model with leverage is considered in \cite{bilayi:ivanoff:kulik:2019}.
}
\end{remark}

It can be shown that  random variables $X_j$, $j\in \mathbb{N}$, satisfying Definition \ref{model:LMSV} are uncorrelated, while their squares inherit the dependence structure from the  subordinated Gaussian sequence $Z_j^2$, $j\in \mathbb{N}$.
Moreover,
 $X_j$, $j\in \mathbb{N}$, inherits the tail behavior from the  sequence $\varepsilon_j, \ j\in \mathbb{N}$, 
if the marginal distribution of the random variables $\varepsilon_j$, $j\in \mathbb{N}$, has a regularly varying right tail, i.e.,
$\overline{F}_{\varepsilon}(x)\defeq \PP\pr{\varepsilon_1>x}=x^{-\alpha}L(x)$ for some $\alpha>0$ and a slowly varying function $L$, and if
$\Es{\sigma^{\alpha+\delta}(Y_1)}<\infty$
for some $\delta >0$.
More precisely, under these assumptions 
 the following asymptotic equivalence holds:
\begin{align*}
\PP\pr{X_1>x}\sim \Es{\sigma^{\alpha}(Y_1)}\PP\pr{\varepsilon_1>x}, \ \text{as $x\rightarrow\infty$.}
\end{align*}
This result is known as  Breiman's Lemma; see \cite{breiman:1965}.
On this account, it follows that
 Definition \ref{model:LMSV} is suited for modeling the previously described characteristic features of financial time series.
In all  following sections, we will therefore assume that the data-generating process $X_j$, $j\in\mathbb{N}$, corresponds to a LMSV time series specified by Definition \ref{model:LMSV}.

\subsection{Organisation of the paper}
Equipped with the introductory remarks and definitions, we are in a position to discuss the structure of the paper. In Section \ref{sec:main} we state the technical  assumptions that are needed for our theoretical results. These are followed by the main theorem on convergence of the two-parameter tail empirical process (Theorem \ref{thm:TEP}).
Convergence of estimators of the tail index
(Corollaries \ref{cor:convergence_gamma_hat} and \ref{cor:convergence_gamma_Hill}) and the test statistics
(Corollaries \ref{cor:convergence_test_statistic} and \ref{cor:convergence_test_statistic_Hill}) are immediate consequences. Simulation studies are presented in Section \ref{sec:simulations}, while real-data analysis can be found in Section \ref{sec:data}.
All the proofs are included in Section \ref{sec:proofs}. In order to establish convergence of the two-parameter tail empirical process, we decompose it into a martingale and a long-range dependent part. The latter is dealt with in
Section \ref{sec:proof-lrd}. For the former, we establish  finite dimensional convergence
(Section \ref{sec:mtg-fidi}) using classical tools from martingale theory, while tightness of the two-parameter martingale part is handled by chaining. This is a theoretical novelty in the present context since the methods used in  related papers are not suitable (the method used in \cite{kulik:soulier:2011} cannot be applied to models with leverage, while the approach in \cite{bilayi:ivanoff:kulik:2019} is not well-suited for two-parameter processes).

\section{Main results}\label{sec:main}

\subsection{Assumptions}

In this section, we establish the assumptions guaranteeing  convergence of the two-parameter empirical process for LMSV time series.
Initially, we specify the LMSV model yielding the main assumptions for the theory: 

\begin{assumption}[Main Assumptions]\label{ass:main}
\noindent
Let $X_j=Z_j\varepsilon_j$, $j\in \mathbb{N}$,  
satisfy Definition \ref{model:LMSV} with $Z_j=\sigma(Y_j)$, $j\in \mathbb{N}$, for some stationary, long-range dependent Gaussian process $Y_j$, $j\in \mathbb{N}$, with autocovariance function $\gamma_Y(k)\defeq\Cov\pr{Y_1, Y_{k+1}}\sim k^{-D}L_{\gamma}(k),  \text{ as $k\rightarrow \infty$,}$ and 
some independent, identically distributed sequence  $\varepsilon_j$, $j\in \mathbb{N}$,
with regularly varying right tail, i.e.,   
$\overline{F}_{\varepsilon}(x)\defeq \PP\pr{\varepsilon_1>x}=x^{-\alpha}L(x)$ for some $\alpha>0$ and a slowly varying function $L$.
Moreover, let $r$ denote the Hermite rank of  $\Psi(y)\defeq \sigma^{\alpha}(y)$ and assume that
$r<1/D$. 
\end{assumption}

In the following, we  list some technical conditions  that characterize the behavior of the slowly varying function
$L$ and the moments of $\sigma\pr{Y_1}$.
For this, we introduce  another condition on the distribution function $F_{\varepsilon}$.
\begin{definition}[Second order regular variation]
Let
$\overline{F}_{\varepsilon}(x)=x^{-\alpha}L(x)$
for some $\alpha>0$ and some slowly varying function $L$ that is represented by
\begin{align*}
L(x)=c\exp\left(\int_1^x\frac{\eta(u)}{u}du\right)
\end{align*}
for some constant $c$  and a measurable function $\eta$.
Furthermore, we assume that there exists a bounded, decreasing function $\eta^{*}$ on $[0, \infty)$, regularly varying at infinity with parameter $\rho\geq 0$, i.e., $\eta^*(x)=x^{-\rho}L_{\eta^*}(x)$, such that
\begin{align*}
\abs{\eta(s)}\leq C\eta^{*}(s),
\end{align*}
for some constant $C$ and for all  $s\geq 0$. We say that $\overline{F}_{\varepsilon}$ is second order regularly varying with tail index $\alpha$ and rate function  $\eta^*$ and we write $\overline{F}_{\varepsilon} \in \text{2RV}(\alpha, \eta^*)$.
\end{definition}
Second-order regular variation allows to control the difference between $\overline{F}_{\varepsilon}$ and the function $u\mapsto u^{-\alpha}$;
see Lemma \ref{lem:control_ratio_Fz} and \ref{lem:control_ratio_difference_Fz} in the Appendix. Moreover,   the specific form of $L$ guarantees continuity of $\overline{F}_{\varepsilon}$.

\begin{assumption}[Technical Assumptions]
\label{ass:technical}
\noindent 
Suppose the main assumptions hold. Additionally, we assume that
\begin{enumerate}[label=(TA.\arabic*)]
\item\label{assump:second_order} $\overline{F}_{\varepsilon} \in \text{2RV}\pr{\alpha, \eta^*}$ \textcolor{black}{and $\eta$ is regularly varying with index $\rho$};
\item\label{assump:eta_of_un}  $\eta^{*}(u_n)=o\left(\frac{d_{n, r}}{n}+\frac{1}{\sqrt{n\overline{F}(u_n)}}\right)$, \textcolor{black}{where $d_{n, r}$ is defined by }
 \begin{align}\label{eq:dn}
d_{n, r}^2=\Var\left(\sum\limits_{j=1}^nH_r(Y_j)\right)\sim c_rn^{2-rD}L_\gamma^r(n), \ c_r=\frac{2 r!}{(1-Dr)(2-Dr)};
\end{align}
\item\label{assump:moments:isigmaY}
$\Es{\sigma^{\alpha+\max\ens{\rho, \alpha}+\varepsilon}\pr{Y_1}}<\infty$ for some $\varepsilon>0$;
\item\label{assump:moments:inverse_sigmaY_2+delta}
 $\Es{\left(\sigma\pr{Y_1}\right)^{-1}}<\infty$.
\end{enumerate}
\end{assumption}

\begin{remark}
 Assumption \ref{assump:eta_of_un} handles the bias which is created by centering the tail empirical process   not by its mean, but rather by the limit of that mean. 
\end{remark}

\begin{example}{\rm
The most commonly used second order assumption is that 
\begin{align*}
L(x)=c\exp\left(\int_1^x\frac{\eta(u)}{u}du\right)
\end{align*}
with  $\eta(s)=s^{-\alpha \beta}$ for some $\beta> 0$.
It then holds that
 $\overline{F}_{\varepsilon}(s)=C\left(s^{-\alpha}+\mathcal{O}(s^{-(\alpha(\beta+1))})\right)$,  for $s\rightarrow \infty$, for some constant $c>0$.
Furthermore, we have
\begin{align*}
\sup\limits_{s\geq s_0}\left|\frac{\overline{F}_{\varepsilon}(u_ns)}{\overline{F}_{\varepsilon}(u_n)}-s^{-\alpha}\right|=\mathcal{O}(u_n^{-\alpha\beta}).
\end{align*}
In this  case, \ref{assump:eta_of_un} can be replaced by the assumption
$u_n^{-\alpha\beta}=o\left(\frac{d_{n, r}}{n}+\frac{1}{\sqrt{n\overline{F}(u_n)}}\right)$.
}
\end{example}
\subsection{Convergence of the tail empirical process}\label{subsec:conv_empirical_process}

Recall that the tail empirical in two parameters is defined by
\begin{align*}
e_n(s,t)\defeq \frac{1}{n\bar{F}(u_n)}\sum\limits_{j=1}^{\pe{nt}}\1\left\{X_j> u_ns\right\}-ts^{-\alpha}, \ s\in [	1, \infty], \ t\in [0, 1].
\end{align*}

The following theorem establishes a characterization of its limit. 
\begin{theorem}\label{thm:TEP}
Let $X_j$, $j\in \mathbb{N}$, be a stationary time series  with marginal tail distribution function $\bar{F}$. Moreover, assume that Assumptions \ref{ass:main} and \ref{ass:technical} hold.
\begin{enumerate}
\item If $\frac{n}{d_{n, r}}=o\left(\sqrt{n\overline{F}(u_n)}\right)$,
\begin{align}\label{eq:limit-1}
\frac{n}{d_{n, r}}e_n(s, t)\Rightarrow  \frac{s^{-\alpha}}{\Es{\sigma^{\alpha}(Y_1)}}\frac{J_r(\Psi)}{r!}Z_{r, H}(t),
\end{align}
where $\Psi(y)=\sigma^{\alpha}(y)$, $r$ is the Hermite rank of $\Psi$, $Z_{r, H}$ is an $r$-th order Hermite process,  $H=1-\frac{rD}{2}$, and $d_{n, r}^{2}$ is defined in (\ref{eq:dn}).
\item If $\sqrt{n\overline{F}(u_n)}=o\left(\frac{n}{d_{n, r}}\right)$,
\begin{align}\label{eq:limit-2}
\sqrt{n\overline{F}(u_n)}e_n(s, t)\Rightarrow B({s^{-\alpha},t}),
\end{align}
 where $B$ denotes a standard Brownian sheet.
\end{enumerate}
The convergence holds in a two-parameter Skorohod space, i.e., $\Rightarrow$ denotes weak convergence in $D\left([1, \infty]\times
[0, 1]\right)$.
\end{theorem}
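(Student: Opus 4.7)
The plan is to split $e_n(s,t)=M_n(s,t)+R_n(s,t)$ with
\[
M_n(s,t)\defeq\frac{1}{n\bar F(u_n)}\sum_{j=1}^{\lfloor nt\rfloor}\bigl(\1\{X_j>u_ns\}-\overline F_\varepsilon(u_ns/Z_j)\bigr),
\]
\[
R_n(s,t)\defeq\frac{1}{n\bar F(u_n)}\sum_{j=1}^{\lfloor nt\rfloor}\overline F_\varepsilon(u_ns/Z_j)-ts^{-\alpha}.
\]
Setting $\mathcal F_j\defeq\sigma\bigl((\eta_k,\varepsilon_k)_{k\leq j}\bigr)$, the representation $Y_j=\sum_{k\geq 1}c_k\eta_{j-k}$ makes $Z_j=\sigma(Y_j)$ measurable with respect to $\mathcal F_{j-1}$, while $\varepsilon_j$ is independent of $\mathcal F_{j-1}$; hence $\E[\1\{X_j>u_ns\}\mid\mathcal F_{j-1}]=\overline F_\varepsilon(u_ns/Z_j)$, so for each fixed $s$ the process $M_n(s,\cdot)$ is an $(\mathcal F_j)$-martingale. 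Crucially, this construction survives the leverage permitted by Definition \ref{model:LMSV}. The two pieces live on different scales, $R_n$ at rate $d_{n,r}/n$ and $M_n$ at rate $1/\sqrt{n\bar F(u_n)}$, and the theorem's dichotomy is simply about which one dominates.

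\textbf{Long-memory part.} By Breiman's Lemma $\bar F(u_n)\sim\E[\sigma^\alpha(Y_1)]\,\overline F_\varepsilon(u_n)$, and second-order regular variation of $\overline F_\varepsilon$ (Assumption \ref{assump:second_order}) yields, uniformly in $s\in[1,\infty]$,
\[
\frac{\overline F_\varepsilon(u_ns/Z_j)}{\bar F(u_n)}=\frac{s^{-\alpha}\sigma^\alpha(Y_j)}{\E[\sigma^\alpha(Y_1)]}+\mathrm{rem}_{n,j}(s),
\]
with $|\mathrm{rem}_{n,j}(s)|\leq C\eta^*(u_n)s^{-\alpha}g(Y_j)$ for some function $g$ integrable under Assumption \ref{assump:moments:isigmaY} (the required control of $\overline F_\varepsilon(us)/\overline F_\varepsilon(u)$ is furnished by the appendix lemmas). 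Assumption \ref{assump:eta_of_un} then renders the remainder negligible after either scaling. Injecting this into $R_n$ leaves the leading term
\[
\widetilde R_n(s,t)\defeq\frac{s^{-\alpha}}{\E[\sigma^\alpha(Y_1)]}\cdot\frac{1}{n}\sum_{j=1}^{\lfloor nt\rfloor}\bigl(\Psi(Y_j)-\E\Psi(Y_1)\bigr),\qquad\Psi=\sigma^\alpha,
\]
which is the standard setting for the non-central limit theorem of Taqqu and Dobrushin--Major. Expanding $\Psi-\E\Psi(Y_1)=\sum_{k\geq r}(J_k(\Psi)/k!)H_k$ and observing that $d_{n,k}/d_{n,r}\to 0$ for $k>r$ (since $rD<1$), one obtains $(n/d_{n,r})\widetilde R_n(s,t)\Rightarrow s^{-\alpha}\E[\sigma^\alpha(Y_1)]^{-1}(J_r(\Psi)/r!)Z_{r,H}(t)$ in $D([1,\infty]\times[0,1])$, with the entire $s$-dependence carried by the deterministic factor $s^{-\alpha}$.

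\textbf{Martingale part and dichotomy.} For $M_n$ I would proceed in two steps. First, finite-dimensional convergence at rate $\sqrt{n\bar F(u_n)}$: the predictable covariation at $(s,t),(s',t')$ reduces to $\frac{1}{n\bar F(u_n)}\sum_{j\leq\lfloor n(t\wedge t')\rfloor}\overline F_\varepsilon(u_n(s\vee s')/Z_j)$, whose almost-sure limit equals $(t\wedge t')(s\vee s')^{-\alpha}$, exactly the covariance of $B(s^{-\alpha},t)$; the conditional Lindeberg condition is automatic since each summand is bounded by $1/(n\bar F(u_n))$, so a martingale central limit theorem delivers the Gaussian limit. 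The main obstacle, and the theoretical novelty flagged in the introduction, is \emph{tightness} of $\sqrt{n\bar F(u_n)}\,M_n$ in the two-parameter Skorohod space: the $\beta$-mixing-based truncation of \cite{kulik:soulier:2011} breaks under leverage and the one-parameter argument of \cite{bilayi:ivanoff:kulik:2019} does not extend. My plan is a chaining argument on a dyadic grid of $[1,\infty]\times[0,1]$: for each dyadic rectangle one controls the mixed increment $M_n(s,t)-M_n(s,t')-M_n(s',t)+M_n(s',t')$ via a Burkholder--Rosenthal moment inequality for martingales with bounded increments, producing a bound of order $\bigl(|s^{-\alpha}-s'^{-\alpha}|\,|t-t'|\bigr)^{p/2}$ for some $p>2$, and summing along chains yields the required modulus of continuity. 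Comparing scales completes the proof: in regime (1), $(n/d_{n,r})/\sqrt{n\bar F(u_n)}\to 0$ forces $(n/d_{n,r})M_n\to 0$ so $(n/d_{n,r})R_n$ produces \eqref{eq:limit-1}; in regime (2), the reverse inequality makes $R_n$ negligible and the martingale part drives the Brownian-sheet limit \eqref{eq:limit-2}.
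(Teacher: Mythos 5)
Your overall architecture is exactly the paper's: the same martingale/long-range-dependent decomposition with respect to $\mathcal F_j=\sigma((\eta_k,\varepsilon_k)_{k\leq j})$, the same identification $\E[\1\{X_j>u_ns\}\mid\mathcal F_{j-1}]=\overline F_\varepsilon(u_ns/\sigma(Y_j))$, the same reduction of $R_n$ to $s^{-\alpha}\cdot n^{-1}\sum_j(\Psi(Y_j)-\E\Psi(Y_1))$ via the second-order regular variation lemmas and Taqqu's non-central limit theorem, and the same martingale CLT (convergence of the predictable covariation plus a trivially satisfied conditional Lindeberg condition) for the finite-dimensional distributions. The scale comparison giving the dichotomy is also as in the paper.

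The genuine gap is in the tightness step. The claimed Burkholder--Rosenthal bound of order $\bigl(|s^{-\alpha}-s'^{-\alpha}|\,|t-t'|\bigr)^{p/2}$ is not attainable uniformly in $n$ for indicator-based increments. Rosenthal's inequality produces \emph{two} terms: the $(p/2)$-th moment of the predictable quadratic variation, which does behave like $(\text{increment})^{p/2}$, and the sum of $p$-th moments of the individual differences, which for $0$--$1$ valued summands is bounded below by a constant times $(n\bar F(u_n))^{1-p/2}\,|s^{-\alpha}-s'^{-\alpha}|\,|t-t'|$ --- only \emph{linear} in the increment. Once $|s^{-\alpha}-s'^{-\alpha}|\,|t-t'|\lesssim (n\bar F(u_n))^{-1}$ this second term dominates, the Kolmogorov--Chentsov-type summation along an $n$-independent dyadic chain diverges, and the modulus-of-continuity bound fails. (Concretely: if the cell probability $\epsilon_n$ is tiny, the event that a single indicator fires has probability $\approx n\bar F(u_n)\epsilon_n$ and contributes $\approx (n\bar F(u_n))^{1-p/2}\epsilon_n\gg\epsilon_n^{p/2}$ to the $p$-th moment.) This is why the paper does not use moment inequalities: it builds a partition $x_i(k)$ whose cells have equal tail mass $\delta 2^{-k}$, \emph{truncates} the chaining at a finite depth $K_n=\lfloor\log_2(\delta a_nC)\rfloor$ tied to the rate $a_n$, applies Freedman's exponential inequality on the event that the predictable quadratic variation is below the threshold $y_{n,k,i}$, and separately bounds the probability that this threshold is exceeded (the terms $A_1,A_2,A_3$, which require \ref{assump:eta_of_un}, \ref{assump:moments:isigmaY} and \ref{assump:moments:inverse_sigmaY_2+delta}). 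Your proposal needs this truncation-plus-exponential-inequality mechanism (or an equivalent substitute); as written, the chaining step would fail at fine scales.
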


The dichotomy of the limiting process is explained by the decomposition of the tail empirical process into 
the sum of a   martingale and  a partial sum of long-range dependent random variables, which  can be viewed as a special case of Doob's decomposition; see Section~~\ref{subsubsec:decomposition_empirical_process}.
If $\frac{n}{d_{n, r}}=o\left(\sqrt{n\overline{F}(u_n)}\right)$,  the martingale part in the decomposition becomes negligible, such that  the limiting process arises from the convergence of the long-range dependent  part.
If $\sqrt{n\overline{F}(u_n)}=o\left(\frac{n}{d_{n, r}}\right)$, the long-range dependent part in the decomposition becomes  negligible, such that the limiting process arises from the convergence of the martingale part.
The same decomposition has already been employed in \cite{kulik:soulier:2011}, \cite{betken:kulik:2019}, and \cite{bilayi:ivanoff:kulik:2019}.

\subsection{Convergence of the tail estimators}
Recall 
that the considered tail index estimators are defined by
\begin{align*}
\widehat{\gamma}_{\lfloor nt\rfloor}&\defeq\frac{1}{\sum_{j=1}^{\lfloor nt\rfloor}
\1\{X_j>u_n\}}\sum_{j=1}^{\pe{nt}}\log\left(\frac{X_j}{u_n}\right)\1\{X_j>u_n\} 
\intertext{and}
\widehat{\gamma}_{\rm Hill}(t)&\defeq\frac{1}{\lfloor k_nt\rfloor}\sum\limits_{i=1}^{\lfloor k_nt\rfloor}\log \left(\frac{X_{\lfloor nt \rfloor:\lfloor nt\rfloor-i+1}}{X_{\lfloor nt\rfloor :\lfloor nt\rfloor-k_{\lfloor nt\rfloor}}}\right).
\end{align*}

Based on \ref{thm:TEP} the limiting distributions of $\widehat{\gamma}_{\lfloor nt\rfloor}$ and $\widehat{\gamma}_{\rm Hill}(t)$ can be established in $D[t_0, 1]$ for any $t_0\in (0, 1)$.

\begin{corollary}\label{cor:convergence_gamma_hat}
Let $X_j$, $j\in \mathbb{N}$, be a stationary time series  with marginal tail distribution function $\bar{F}$. Moreover, assume that Assumptions \ref{ass:main} and \ref{ass:technical} hold.
\begin{enumerate}
\item If $\frac{n}{d_{n, r}}=o\left(\sqrt{n\overline{F}(u_n)}\right)$, then
\begin{align*}
\frac{n}{d_{n, r}}t\left(\widehat{\gamma}_{\pe{nt}}-\gamma\right)\Rightarrow 0
\end{align*}
in $D[t_0, 1]$ for all $t_0\in \pr{0,1}$.
\item If $\sqrt{n\overline{F}(u_n)}=o\left(\frac{n}{d_{n, r}}\right)$, then
\begin{align}\label{eq:limit-weakdep}
\sqrt{n\overline{F}(u_n)} t\left(\widehat{\gamma}_{\pe{nt}}-\gamma\right)\Rightarrow  \int_1^{\infty}s^{-1}B\pr{{s^{-\alpha},t}}ds-\alpha^{-1}B\pr{{1,t}}
\end{align}
 in $D[t_0, 1]$ for all $t_0\in \pr{0,1}$.
\end{enumerate}
\end{corollary}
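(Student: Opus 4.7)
The plan is to reduce the problem to Theorem~\ref{thm:TEP} by writing $t(\widehat\gamma_{\lfloor nt\rfloor}-\gamma)$ as a continuous functional of the two-parameter tail empirical process $e_n$. First I would exploit the identity already recorded in the introduction,
\begin{equation*}
\widehat\gamma_{\lfloor nt\rfloor}=\frac{1}{\widetilde T_n(1,t)}\int_1^\infty s^{-1}\widetilde T_n(s,t)\,ds,
\end{equation*}
substitute $\widetilde T_n(s,t)=ts^{-\alpha}+e_n(s,t)$, and use $\gamma=1/\alpha=\int_1^\infty s^{-1-\alpha}ds$ to obtain the algebraic reduction
\begin{equation*}
t\bigl(\widehat\gamma_{\lfloor nt\rfloor}-\gamma\bigr)=\frac{t}{\widetilde T_n(1,t)}\left[\int_1^\infty s^{-1}e_n(s,t)\,ds-\gamma\, e_n(1,t)\right].
\end{equation*}
This converts the problem into the study of a linear functional of $e_n$.

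Next, I would multiply by the scaling sequence $a_n\in\{n/d_{n,r},\sqrt{n\overline F(u_n)}\}$ and apply Theorem~\ref{thm:TEP} together with the continuous mapping theorem. Define $\Phi\colon f\mapsto \int_1^\infty s^{-1}f(s,t)\,ds-\gamma f(1,t)$. I would verify that $\Phi$ is continuous on $D([1,\infty]\times[0,1])$ when restricted to paths $f(s,t)=O(s^{-\alpha})$ as $s\to\infty$; the cleanest route is the substitution $u=s^{-\alpha}$, which turns the integral into $\alpha^{-1}\int_0^1 u^{-1}f(u^{-1/\alpha},t)\,du$ on the compact rectangle $[0,1]\times[0,1]$, and combine this with a uniform tail estimate using the bound $0\leq\widetilde T_n(s,t)\leq\widetilde T_n(1,t)$ and the second-order regular variation of $\overline F_\varepsilon$ through Assumption~\ref{ass:technical} to push the truncation level to infinity. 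The image laws of $\Phi$ are supported on paths that are continuous in $t$, so the limit places no mass on discontinuity sets.

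In the long-memory regime, Theorem~\ref{thm:TEP} yields $a_n e_n(s,t)\Rightarrow c\,s^{-\alpha}Z_{r,H}(t)$ with $c=J_r(\Psi)/\bigl(r!\,\E\sigma^\alpha(Y_1)\bigr)$. Applying $\Phi$ gives
\begin{equation*}
c\,Z_{r,H}(t)\left[\int_1^\infty s^{-1-\alpha}\,ds-\gamma\right]=c\,Z_{r,H}(t)\bigl[\tfrac{1}{\alpha}-\gamma\bigr]=0,
\end{equation*}
so the bracket above, scaled by $n/d_{n,r}$, converges weakly to $0$ and hence in probability to $0$. In the weak-dependence regime, $a_n e_n(s,t)\Rightarrow B(s^{-\alpha},t)$, and $\Phi$ produces exactly the stated limit $\int_1^\infty s^{-1}B(s^{-\alpha},t)\,ds-\alpha^{-1}B(1,t)$. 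To pass from the bracket to $t(\widehat\gamma_{\lfloor nt\rfloor}-\gamma)$ itself, I would use Slutsky: since $e_n(1,t)\to 0$ uniformly on $[t_0,1]$ (because $a_n e_n$ is tight and $a_n\to\infty$), the prefactor $t/\widetilde T_n(1,t)=t/\bigl(t+e_n(1,t)\bigr)$ converges uniformly to $1$ on $[t_0,1]$, which is precisely why the restriction to $t\geq t_0>0$ is imposed.

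The main obstacle will be the continuous mapping step: one must show the integral functional is well-defined and continuous in a neighborhood of the limit in the two-parameter Skorohod topology, and in particular justify the interchange of limit and integral at $s=\infty$. The required uniform tail control relies on second-order regular variation (Assumption~\ref{assump:second_order}) and the moment hypothesis \ref{assump:moments:isigmaY}, which together guarantee that $a_n\int_M^\infty s^{-1}e_n(s,t)\,ds$ is uniformly small in $n$ for large $M$. Everything else, including the vanishing of the long-memory limit due to cancellation and the explicit computation of the Brownian-sheet limit, is an immediate consequence of the reduction above.
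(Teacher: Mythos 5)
Your proposal is correct and follows essentially the same route as the paper: reduce $t(\widehat{\gamma}_{\lfloor nt\rfloor}-\gamma)$ to the linear functional $\int_1^\infty s^{-1}e_n(s,t)\,ds-\alpha^{-1}e_n(1,t)$ divided by $\widetilde{T}_n(1,t)/t$, dispose of the prefactor via the uniform convergence $\widetilde{T}_n(1,t)\to t$, apply Theorem~\ref{thm:TEP} with the continuous mapping theorem on a truncated integral $\int_1^R$ and then push $R\to\infty$ by a tail estimate, and observe that in the long-memory regime the limit $s^{-\alpha}Z_{r,H}(t)$ factorizes so that $\int_1^\infty s^{-1-\alpha}ds-\alpha^{-1}=0$ kills the limit. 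Your explicit substitution $u=s^{-\alpha}$ for the continuity of the integral functional is a slightly more self-contained treatment of the step the paper delegates to \cite{kulik:soulier:2011}, but the argument is the same in substance.
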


\begin{corollary}\label{cor:convergence_gamma_Hill}
Let $X_j$, $j\in \mathbb{N}$, be a stationary time series  with marginal tail distribution function $\bar{F}$. Moreover, assume that Assumptions \ref{ass:main} and \ref{ass:technical} hold.
\begin{enumerate}
\item If $\frac{n}{d_{n, r}}=o\left(\sqrt{n\overline{F}(u_n)}\right)$, then
\begin{align*}
\frac{n}{d_{n, r}}t\left(\widehat{\gamma}_{\rm Hill}(t)-\gamma\right)\Rightarrow  0
\end{align*}
in $D[t_0, 1]$ for all $t_0\in \pr{0,1}$.
\item If $\sqrt{n\overline{F}(u_n)}=o\left(\frac{n}{d_{n, r}}\right)$, then
\begin{align}\label{eq:limit-weakdep-Hill}
\sqrt{n\overline{F}(u_n)} t\left(\widehat{\gamma}_{\rm Hill}(t)-\gamma\right)\Rightarrow  \int_1^{\infty}s^{-1}B\pr{{s^{-\alpha},t}}ds-\alpha^{-1}B\pr{{1,t}}
\end{align}
in $D[t_0, 1]$ for all $t_0\in \pr{0,1}$.
\end{enumerate}
\end{corollary}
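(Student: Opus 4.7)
The plan is to reduce Corollary \ref{cor:convergence_gamma_Hill} to Corollary \ref{cor:convergence_gamma_hat} by representing the Hill estimator as the estimator $\widehat{\gamma}_{\lfloor nt\rfloor}$ in which the deterministic threshold $u_n$ has been replaced by the data-driven threshold $\widehat{u}_{n,t}\defeq X_{\lfloor nt\rfloor:\lfloor nt\rfloor-k_{\lfloor nt\rfloor}}$. Setting $V_{n,t}\defeq \widehat{u}_{n,t}/u_n$, the same algebra that produced the integral representation of $\widehat{\gamma}_{\lfloor nt\rfloor}$ yields
\[
\widehat{\gamma}_{\rm Hill}(t)=\frac{1}{\widetilde T_n(V_{n,t},t)}\int_1^\infty s^{-1}\widetilde T_n(V_{n,t}s,t)\,ds.
\]

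The first step is to control $V_{n,t}$ by a Vervaat-type inversion. By construction $\widetilde T_n(V_{n,t},t)$ equals the deterministic quantity $\lfloor k_nt\rfloor/(n\bar F(u_n))$, which under the calibration $k_n\sim n\bar F(u_n)$ converges uniformly in $t\in[t_0,1]$ to $t$. Combined with the uniform convergence $\widetilde T_n(s,t)\to ts^{-\alpha}$ from Theorem \ref{thm:TEP} and the monotonicity of $s\mapsto\widetilde T_n(s,t)$, this produces $V_{n,t}\to 1$ uniformly in $t\in[t_0,1]$, together with the expansion
\[
t\pr{V_{n,t}^{-\alpha}-1}=-e_n(1,t)+o_P\pr{a_n^{-1}},
\]
where $a_n$ denotes the normalizing rate of the regime at hand, namely $n/d_{n,r}$ in case 1 and $\sqrt{n\bar F(u_n)}$ in case 2.

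Substituting $\widetilde T_n(V_{n,t}s,t)=t(V_{n,t}s)^{-\alpha}+e_n(V_{n,t}s,t)$ into the display above, integrating the deterministic piece against $s^{-1}\,ds$ and subtracting $\gamma=1/\alpha$, one obtains after algebraic simplification
\[
t\pr{\widehat{\gamma}_{\rm Hill}(t)-\gamma}=\int_1^\infty s^{-1}e_n(V_{n,t}s,t)\,ds-\alpha^{-1}e_n(1,t)+o_P\pr{a_n^{-1}}.
\]
The uniform convergence $V_{n,t}\to 1$ and the continuity in $s$ of the limits in Theorem \ref{thm:TEP} permit replacing $V_{n,t}s$ by $s$ in the integrand without affecting the weak limit, so the argument from Corollary \ref{cor:convergence_gamma_hat} then applies verbatim. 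In case 2 this produces \eqref{eq:limit-weakdep-Hill}; in case 1 the two stochastic contributions carry the common factor $\tfrac{J_r(\Psi)}{r!\Es{\sigma^{\alpha}(Y_1)}}Z_{r,H}(t)$ with coefficients $\alpha^{-1}$ and $-\alpha^{-1}$ respectively, producing exact cancellation and hence the zero limit.

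The main obstacle is the passage to the weak limit inside the improper integral $\int_1^\infty s^{-1}e_n(V_{n,t}s,t)\,ds$: weak convergence of $e_n$ on the compactified strip $[1,\infty]\times[0,1]$ does not per se ensure convergence of this integral, so tightness bounds at the upper endpoint (paralleling those needed in the proof of Theorem \ref{thm:TEP}) must be invoked to justify a dominated-convergence-type exchange. The Vervaat inversion supplying the expansion for $V_{n,t}$ is the other delicate point; it is obtained by sandwiching $V_{n,t}$ between deterministic arguments of $\widetilde T_n$ and exploiting the joint uniform convergence of the tail empirical process provided by Theorem \ref{thm:TEP}.
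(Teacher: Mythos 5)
Your proposal is correct and follows essentially the same route as the paper: both reduce the Hill estimator to an integral of the tail empirical process evaluated at the random threshold, control that threshold by inverting the tail empirical process (the paper does this via Skorokhod representation plus Vervaat's lemma applied to the full inverse process $\Gamma_{n,t}^{-}$, you do it at the single level $V_{n,t}=\widehat u_{n,t}/u_n$), replace $V_{n,t}s$ by $s$ using uniform convergence and continuity of the limit, and obtain the zero limit in the long-memory regime from the exact cancellation $\int_1^\infty s^{-1-\alpha}\,ds-\alpha^{-1}=0$. The two delicate points you flag (the improper integral at $s=\infty$ and the inversion step) are exactly the ones the paper also defers to \cite{kulik:soulier:2011} and to Vervaat's lemma, respectively.
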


\begin{remark}\label{remark:limit_tail_est}
{\rm
\begin{enumerate}
\item
Following \cite{kulik:soulier:2011}, we conjecture that the proper scaling in the first case is $a_n=\sqrt{n\overline{F}(u_n)}$, as well,  yielding the same limit as in the second case. However, within the scope of this article, we will not consider the corresponding argument in detail.
\item The limit in \eqref{eq:limit-weakdep} and  \eqref{eq:limit-weakdep-Hill} corresponds to $\gamma B(t)$, $t\in [0, 1]$, where $B$ is a standard Brownian motion.
\end{enumerate}
}
\end{remark}

\subsection{Asymptotic distribution of the test statistics}
Recall that the considered test statistics for the change-point problem $\pr{H, A}$ are defined by
\begin{align*}
\Gamma_n&\defeq\sup\limits_{t\in [t_0,1]}t\left|\frac{\widehat{\gamma}_{\lfloor nt\rfloor}}{\widehat{\gamma}_n}-1\right|
\intertext{and}
\widetilde \Gamma_n&\defeq\sup\limits_{t\in [t_0, 1]}t\left|\frac{\widehat{\gamma}_{\rm Hill}(t)}{\widehat{\gamma}_{\rm Hill}(1)}-1\right|\;.
\end{align*}
Using the convergence obtained in Corollaries~~\ref{cor:convergence_gamma_hat} and~ \ref{cor:convergence_gamma_Hill}, we derive the asymptotic
distribution of the test statistics.

\begin{corollary}\label{cor:convergence_test_statistic}
Let $X_j$, $j\in \mathbb{N}$, be a stationary time series  with marginal tail distribution function $\bar{F}$. Moreover, assume that Assumptions \ref{ass:main} and \ref{ass:technical} hold. 
If $\sqrt{n\overline{F}(u_n)}=o\left(\frac{n}{d_{n, r}}\right)$, then, for all $t_0\in \pr{0,1}$, 
\begin{align*}
&\sqrt{n\overline{F}(u_n)}\sup\limits_{t\in [t_0,1]}t\left|\frac{\widehat{\gamma}_{\lfloor nt\rfloor}}{\widehat{\gamma}_n}-1\right|\Rightarrow
\sup\limits_{t\in [t_0, 1]}\left|B(t)-tB(1)\right|,
\end{align*}
where $B(t)$, $t\in [0, 1]$, denotes a standard Brownian motion.
\end{corollary}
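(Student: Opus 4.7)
The plan is to reduce the ratio statistic to a Brownian-bridge-type functional of the process already handled in Corollary~\ref{cor:convergence_gamma_hat}(2), and then apply the continuous mapping theorem together with Slutsky. Write $a_n\defeq\sqrt{n\bar F(u_n)}$ and $X_n(t)\defeq a_n\, t\,(\widehat\gamma_{\lfloor nt\rfloor}-\gamma)$. The elementary identity
\begin{equation*}
a_n\, t\left(\frac{\widehat\gamma_{\lfloor nt\rfloor}}{\widehat\gamma_n}-1\right)
=\frac{1}{\widehat\gamma_n}\bigl[a_n\, t\,(\widehat\gamma_{\lfloor nt\rfloor}-\gamma)-t\,a_n(\widehat\gamma_n-\gamma)\bigr]
=\frac{X_n(t)-t\,X_n(1)}{\widehat\gamma_n}
\end{equation*}
is the key reformulation: it shows that the test statistic, up to the scalar $1/\widehat\gamma_n$, is the bridge-type functional $X_n(t)-t\,X_n(1)$ applied to the process $X_n$.

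First I would invoke Corollary~\ref{cor:convergence_gamma_hat}(2) with Remark~\ref{remark:limit_tail_est}(2) to get $X_n\Rightarrow \gamma B$ in $D[t_0,1]$ where $B$ is a standard Brownian motion. Since $\gamma B$ has continuous sample paths, the evaluation map $f\mapsto(f,f(1))$ from $D[t_0,1]$ to $D[t_0,1]\times\mathbb R$ is continuous at $\gamma B$, so the continuous mapping theorem upgrades this to the joint convergence $(X_n,X_n(1))\Rightarrow(\gamma B,\gamma B(1))$. The subsequent map $(f,c)\mapsto f(\cdot)-(\cdot)c$ is continuous from $D[t_0,1]\times\mathbb R$ to $D[t_0,1]$, hence
\begin{equation*}
X_n(t)-t\,X_n(1)\;\Rightarrow\;\gamma\bigl(B(t)-t\,B(1)\bigr)\quad\text{in }D[t_0,1].
\end{equation*}

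Next, taking $t=1$ in Corollary~\ref{cor:convergence_gamma_hat}(2) gives $a_n(\widehat\gamma_n-\gamma)=O_P(1)$, and since $a_n\to\infty$ this yields $\widehat\gamma_n\to\gamma$ in probability. Slutsky's lemma then divides the previous display by $\widehat\gamma_n$ to obtain
\begin{equation*}
a_n\, t\left(\frac{\widehat\gamma_{\lfloor nt\rfloor}}{\widehat\gamma_n}-1\right)\;\Rightarrow\;B(t)-t\,B(1)\quad\text{in }D[t_0,1].
\end{equation*}
Finally, because the limit has continuous paths and $f\mapsto \sup_{t\in[t_0,1]}|f(t)|$ is continuous on $C[t_0,1]$, a last application of the continuous mapping theorem yields the claim.

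I do not anticipate a genuine obstacle: the non-trivial analytic content is entirely contained in Corollary~\ref{cor:convergence_gamma_hat}. The only points worth some care are (i) justifying the joint convergence $(X_n,X_n(1))\Rightarrow(\gamma B,\gamma B(1))$, which works because $t=1$ is a continuity point of the limiting path, and (ii) invoking Slutsky for the denominator $\widehat\gamma_n$, which requires the weak consistency just noted. Neither step introduces anything beyond standard functional weak-convergence arguments.
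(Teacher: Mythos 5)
Your proposal is correct and follows essentially the same route as the paper: the paper likewise combines Corollary~\ref{cor:convergence_gamma_hat} (with the identification of the limit as $\gamma B$ from Remark~\ref{remark:limit_tail_est}), the weak consistency $\widehat\gamma_n\overset{P}{\to}\gamma$, Slutsky's theorem, and the continuous mapping theorem applied to $f\mapsto\sup_{t\in[t_0,1]}\left|f(t)-tf(1)\right|$. You merely spell out in more detail (the algebraic identity for the ratio, the joint convergence $(X_n,X_n(1))$, and the continuity of the bridge functional) what the paper compresses into one sentence.
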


\begin{corollary}\label{cor:convergence_test_statistic_Hill}
Let $X_j$, $j\in \mathbb{N}$, be a stationary time series  with marginal tail distribution function $\bar{F}$. Moreover, assume that Assumptions \ref{ass:main} and \ref{ass:technical} hold.
If $\sqrt{n\overline{F}(u_n)}=o\left(\frac{n}{d_{n, r}}\right)$, then, for all $t_0\in \pr{0,1}$, 
\begin{align*}
&\sqrt{n\overline{F}(u_n)}\sup\limits_{t\in [t_0, 1]}t\left|\frac{\widehat{\gamma}_{\rm Hill}(t)}{\widehat{\gamma}_{\rm Hill}(1)}-1\right|\Rightarrow
\sup\limits_{t\in [t_0, 1]}\left|B(t)-tB(1)\right|,
\end{align*}
where $B(t)$, $t\in [0, 1]$, denotes a standard Brownian motion.
\end{corollary}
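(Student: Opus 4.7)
The plan is to deduce Corollary \ref{cor:convergence_test_statistic_Hill} from Corollary \ref{cor:convergence_gamma_Hill} via a Slutsky-type argument in the function space $D[t_0,1]$, followed by a single application of the continuous mapping theorem with the supremum functional. Write $a_n \defeq \sqrt{n\overline{F}(u_n)}$ and define
\begin{align*}
\Phi_n(t) \defeq a_n\, t\pr{\widehat{\gamma}_{\rm Hill}(t)-\gamma}, \qquad t\in[t_0,1].
\end{align*}
Corollary \ref{cor:convergence_gamma_Hill} combined with Remark \ref{remark:limit_tail_est} gives $\Phi_n \Rightarrow \gamma B$ in $D[t_0,1]$, where $B$ is a standard Brownian motion.

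Next I would exploit the elementary identity
\begin{align*}
a_n\, t\pr{\frac{\widehat{\gamma}_{\rm Hill}(t)}{\widehat{\gamma}_{\rm Hill}(1)}-1} = \frac{\Phi_n(t) - t\,\Phi_n(1)}{\widehat{\gamma}_{\rm Hill}(1)}.
\end{align*}
Since the limit $\gamma B$ has almost surely continuous paths, the evaluation map at $t=1$ is continuous on $D[t_0,1]$ at the limit, so $(\Phi_n(\cdot),\Phi_n(1)) \Rightarrow (\gamma B(\cdot), \gamma B(1))$ jointly. The continuous mapping theorem applied to $(f,c)\mapsto (f(t)-tc)_{t\in[t_0,1]}$, a continuous map from $D[t_0,1]\times\R$ to $D[t_0,1]$, then yields
\begin{align*}
\Phi_n(t) - t\,\Phi_n(1) \Rightarrow \gamma\pr{B(t)-tB(1)} \quad \text{in } D[t_0,1].
\end{align*}
In parallel, the boundedness of $\Phi_n(1)$ in probability together with $a_n\to\infty$ forces $\widehat{\gamma}_{\rm Hill}(1)\stackrel{P}{\to}\gamma$.

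Combining these via Slutsky (i.e.\ continuous mapping with $(g,c)\mapsto g/c$ at $c=\gamma\neq 0$) delivers
\begin{align*}
a_n\, t\pr{\frac{\widehat{\gamma}_{\rm Hill}(t)}{\widehat{\gamma}_{\rm Hill}(1)}-1} \Rightarrow B(t)-tB(1) \quad \text{in } D[t_0,1].
\end{align*}
Finally, I would apply the continuous mapping theorem one last time with $f\mapsto \sup_{t\in[t_0,1]}\abs{f(t)}$, which is continuous on $D[t_0,1]$ at continuous $f$; since the Brownian bridge has almost surely continuous paths, the stated convergence of $a_n\widetilde{\Gamma}_n$ to $\sup_{t\in[t_0,1]}\abs{B(t)-tB(1)}$ follows.

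The argument is essentially soft; the only point requiring attention is verifying that point evaluation at $t=1$ and the subtraction map are continuous in the Skorohod topology at the limiting continuous path, both of which are standard once one knows that the candidate limit lies in $C[t_0,1]$ almost surely. All the analytic work — in particular tightness and the martingale/long-range-dependent decomposition — has already been absorbed into Corollary \ref{cor:convergence_gamma_Hill}, so no new probabilistic estimate is needed here.
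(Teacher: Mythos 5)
Your proposal is correct and follows essentially the same route as the paper: the paper likewise deduces the result from Corollary \ref{cor:convergence_gamma_Hill} by combining Slutsky's theorem (using $\widehat{\gamma}_{\rm Hill}(1)\stackrel{P}{\to}\gamma$) with the continuous mapping theorem applied to the functional $f\mapsto \sup_{t\in[t_0,1]}\abs{f(t)-tf(1)}$. You merely spell out the intermediate continuity checks (endpoint evaluation, the subtraction map, the quotient) that the paper leaves implicit.
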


\section{Simulations}\label{sec:simulations}

For all simulations, the following specifications are made:
\begin{align}
X_j=\sigma(Y_j)\varepsilon_j,\; \ \ j\geq 1\;,\label{eq: LMSVsimulation_normal}
\end{align}
where
\begin{itemize}
\item  $\varepsilon_j$, $j\geq 1$, is an independent, identically distributed sequence of Pareto distributed  random variables  generated  by  the function \verb$rgpd$  (\verb$fExtremes$ package in \verb$R$);
\item $Y_j$, $j\geq 1$, is a fractional Gaussian noise sequence generated by the function \verb$simFGN0$ (\verb$longmemo$ package in \verb$R$) with  Hurst parameter $H$;
\item  $\sigma(y)=\exp(y)$.
\end{itemize}

Under the alternative, we insert a change of height $h$ at location $k=\lfloor n\tau\rfloor$ by simulating  independent, identically   Pareto distributed observations $\varepsilon_j$, $j\geq 1$, with $\varepsilon_j$, $j=1, \ldots, k$, having tail index $\alpha_1=\ldots =\alpha_k=\alpha$ and $\varepsilon_j$, $j=k+1, \ldots, n$, having tail index $\alpha_{k+1}=\ldots =\alpha_n=\alpha+h$.

We base test decisions on the statistic $\widetilde\Gamma_{n}\defeq \max\limits_{1\leq k\leq n-1}\Gamma_{k, n}$, where
\begin{align}\label{eq:test-stat}
\widetilde\Gamma_{k, n}=\frac{k}{n}\left|\frac{\widehat{\gamma}_{\rm Hill}\left(\frac{k}{n}\right)}{\widehat{\gamma}_{\rm Hill}(1)}-1\right| \text{ with }
\widehat{\gamma}_{\rm Hill}\left(t\right)=\frac{1}{\lfloor k_nt\rfloor}\sum\limits_{i=1}^{\lfloor k_nt\rfloor}\log \left(\frac{X_{\lfloor nt\rfloor:\lfloor nt\rfloor-i+1}}{X_{\lfloor nt\rfloor :\lfloor nt\rfloor-\lfloor k_nt\rfloor}}\right),
\end{align}
and we choose a significance level of $5\%$.

For the computation of the test statistic, the choice of $k_n$, i.e., the number of largest observations that contribute to the estimation of the tail index,
is  considered a delicate issue.
In fact, it has been shown in \cite{hall:1982} that the optimal choice
of $k_n$ depends on the tail behavior of the data-generating process. 
Due to this circularity, \cite{dumouchel:1983} suggests to chose $k_n$ proportional to the sample size. 
As noted in \cite{quintos:2001}, a corresponding choice of $k_n$ has been shown to perform well in simulations and is widely used by practitioners.
Hence, we choose $k_n=\lfloor np\rfloor$, i.e.,  $p$ defines the proportion of the data that the estimation of the tail index is based on.

The power of the testing procedures is analyzed by considering different choices for the height of the level shift, denoted by $h$, and the location of the change-point, denoted by $\tau$. In the tables, the columns  that are superscribed by $h=0$ correspond to the frequency of a type 1 error, i.e., the rejection rate under the hypothesis.

Considering all simulation results, the first thing to note is that these
concur with the expected behavior of change-point tests:
An increasing sample size goes along with an improvement of the finite sample performance, i.e., the empirical size approaches the level of significance and the empirical power increases, the empirical power of the testing procedures increases when the height of the level shift increases, and the empirical power is higher for breakpoints located in the middle of the sample than for change-point locations that lie close to the boundary of the testing region.

Both Hurst parameter and  tail index, seem to have a significant effect on the rejection rates of the change-point test.
An increase in dependence, i.e., an increase of the Hurst parameter $H$,  leads to an increase in the  number of rejections. On the one hand, this leads to an increase of power, on the other hand, it results in  a larger deviation of the empirical size from the significance level.
An increase of tail thickness, i.e., a decrease of the tail parameter $\alpha$, however, results in an improvement of the test's performance in that the empirical power increases while the empirical size draws closer to the level of significance. Moreover, the empirical power of the test is higher for changes to heavier tails, i.e., the test tends to detect changes with a negative change-point height $h$ better.

Technically speaking, the particular case of a change with height $h=-1$ from $\alpha=1$
to $\alpha=0$  does not fall under our model assumptions.
For a change after a proportion $\tau=0.5$ of the data, the empirical power is extremely low in this case. However, for an early change, i.e., for $\tau=0.25$, the empirical power is comparatively high.  
\newpage

\begin{landscape}
\thispagestyle{empty}
\begin{table}
\begin{threeparttable}
\scriptsize
\begin{tabular}{ccccccccccccccccccccc}
 &  &  &  &  \multicolumn{5}{c}{$\alpha=2.5$}  &  & \multicolumn{5}{c}{$\alpha=2$}  &  & \multicolumn{5}{c}{$\alpha=1$} \\
  \cline{5-9}   \cline{11-15}   \cline{17-21}\\
 & $p$ & $n$ &  & $h=0$ & $h=0.5$ & $h=1$ & $h=-0.5$ & $h=-1$ &  & $h=0$ & $h=0.5$ & $h=1$ & $h=-0.5$ & $h=-1$ &  & $h=0$ & $h=0.5$ & $h=1$ & $h=-0.5$ & $h=-1$ \\
 \cline{2-21}
 \\
 \multirow{6}{*}{\rotatebox{90}{$H=0.6$}}
& 0.1 & 300 &  & 0.088 & 0.088 & 0.086 & 0.109 & 0.192 &  & 0.097 & 0.086 & 0.086 & 0.145 & 0.418 &  & 0.100 & 0.110 & 0.128 & 0.641 & 0.056 \\
 &  & 500  &  & 0.078 & 0.069 & 0.065 & 0.105 & 0.249 &  & 0.078 & 0.083 & 0.069 & 0.148 & 0.602 &  & 0.092 & 0.129 & 0.141 & 0.842 & 0.053 \\
 &  & 1000 &  & 0.071 & 0.063 & 0.059 & 0.106 & 0.391 &  & 0.062 & 0.072 & 0.073 & 0.195 & 0.853 &  & 0.077 & 0.188 & 0.222 & 0.979 & 0.054 \\
 & 0.2 & 300 &  & 0.071 & 0.065 & 0.058 & 0.078 & 0.176 &  & 0.072 & 0.073 & 0.071 & 0.112 & 0.485 &  & 0.076 & 0.125 & 0.178 & 0.816 & 0.095 \\
 &  & 500  &  & 0.049 & 0.059 & 0.059 & 0.076 & 0.227 &  & 0.060 & 0.061 & 0.071 & 0.123 & 0.687 &  & 0.067 & 0.187 & 0.249 & 0.951 & 0.133 \\
 &  & 1000  &  & 0.044 & 0.050 & 0.055 & 0.086 & 0.387 &  & 0.047 & 0.053 & 0.075 & 0.185 & 0.911 &  & 0.062 & 0.308 & 0.428 & 0.999 & 0.235 \\
 &  & &  & &  &  & & &  &  &  &  &  & &  & & &  &  & \\
 \multirow{6}{*}{\rotatebox{90}{$H=0.7$}} & 0.1 & 300 &  & 0.112 & 0.103 & 0.096 & 0.137 & 0.217 &  & 0.114 & 0.100 & 0.102 & 0.159 & 0.443 &  & 0.106 & 0.130 & 0.131 & 0.642 & 0.055 \\
 &  & 500 &  & 0.093 & 0.086 & 0.087 & 0.123 & 0.262 &  & 0.096 & 0.082 & 0.091 & 0.166 & 0.626 &  & 0.095 & 0.131 & 0.148 & 0.835 & 0.052 \\
&  & 1000 &  & 0.084 & 0.069 & 0.070 & 0.118 & 0.385 &  & 0.077 & 0.077 & 0.086 & 0.213 & 0.866 &  & 0.084 & 0.199 & 0.239 & 0.979 & 0.062 \\
 & 0.2 & 300 &  & 0.087 & 0.083 & 0.083 & 0.105 & 0.196  &  & 0.080 & 0.085 & 0.090 & 0.131 & 0.502 &  & 0.084 & 0.134 & 0.184 & 0.824 & 0.092 \\
&  & 500 &  & 0.075 & 0.080 & 0.071 & 0.099 & 0.256 &  & 0.073 & 0.073 & 0.081 & 0.147 & 0.702 &  & 0.075 & 0.196 & 0.263 & 0.959 & 0.122 \\
&  & 1000 &  & 0.068 & 0.063 & 0.067 & 0.109 & 0.408 &  & 0.068 & 0.077 & 0.090 & 0.211 & 0.919 &  & 0.066 & 0.317 & 0.458 & 0.999 & 0.235 \\
 &  &  &  &   & & &  & &  &  &  &   &  & &  &   &  &  &  &\\
 \multirow{6}{*}{\rotatebox{90}{$H=0.8$}} & 0.1 & 300&  & 0.140 & 0.125 & 0.124 & 0.149 & 0.248 &  & 0.130 & 0.135 & 0.124 & 0.186 & 0.477 &  & 0.116 & 0.132 & 0.148 & 0.637 & 0.053 \\
 &  & 500 &  & 0.131 & 0.122 & 0.113 & 0.156 & 0.313 &  & 0.108 & 0.119 & 0.112 & 0.197 & 0.662 &  & 0.101 & 0.146 & 0.171 & 0.842 & 0.053 \\
 &  & 1000 &  & 0.108 & 0.109 & 0.108 & 0.167 & 0.446 &  & 0.107 & 0.113 & 0.115 & 0.254 & 0.881 &  & 0.090 & 0.217 & 0.264 & 0.984 & 0.058 \\
 & 0.2 & 300 &  & 0.118 & 0.110 & 0.116 & 0.135 & 0.250 &  & 0.107 & 0.112 & 0.118 & 0.174 & 0.560 &  & 0.092 & 0.171 & 0.217 & 0.837 & 0.078 \\
 &  & 500 &  & 0.122 & 0.111 & 0.103 & 0.157 & 0.319 &  & 0.101 & 0.109 & 0.118 & 0.192 & 0.743 &  & 0.079 & 0.209 & 0.299 & 0.957 & 0.123 \\
 &  & 1000 &  & 0.098 & 0.107 & 0.113 & 0.165 & 0.491 &  & 0.100 & 0.117 & 0.142 & 0.269 & 0.935 &  & 0.076 & 0.360 & 0.493 & 0.999 & 0.215 \\
 &  &  &  &  &  &  & &&  & & &  &
 & &  & & &  &  &   \\
 \multirow{6}{*}{\rotatebox{90}{$H=0.9$}} & 0.1 & 300 &  & 0.175 & 0.164 & 0.165 & 0.192 & 0.308 &  & 0.166 & 0.151 & 0.162 & 0.215 & 0.530 &  & 0.120 & 0.152 & 0.167 & 0.650 & 0.059 \\
&  & 500 &  & 0.167 & 0.165 & 0.164 & 0.201 & 0.395 &  & 0.152 & 0.151 & 0.159 & 0.244 & 0.715 &  & 0.105 & 0.166 & 0.198 & 0.834 & 0.053 \\
 &  & 1000  &  & 0.175 & 0.171 & 0.176 & 0.247 & 0.554 &  & 0.156 & 0.166 & 0.185 & 0.322 & 0.924 &  & 0.104 & 0.239 & 0.289 & 0.982 & 0.056 \\
 & 0.2 & 300 &  & 0.169 & 0.158 & 0.168 & 0.194 & 0.341 &  & 0.140 & 0.162 & 0.161 & 0.215 & 0.646 &  & 0.102 & 0.200 & 0.268 & 0.846 & 0.063 \\
&  & 500 &  & 0.177 & 0.183 & 0.171 & 0.213 & 0.458 &  & 0.153 & 0.158 & 0.180 & 0.275 & 0.821 &  & 0.101 & 0.262 & 0.373 & 0.964 & 0.079 \\
&  & 1000 &  & 0.207 & 0.203 & 0.215 & 0.281 & 0.625 &  & 0.175 & 0.192 & 0.230 & 0.372 & 0.966 &  & 0.100 & 0.414 & 0.557 & 0.999 & 0.154 \\
\end{tabular}
\caption{\footnotesize Rejection rates of the change-point test based on the statistic $\Gamma_n$, $k_n=\lfloor np\rfloor$,   for  LMSV  time series (Pareto distributed $\varepsilon_j,$ $j\geq 1$)   of length $n$ with Hurst parameter $H$, tail index $\alpha$  and a shift in the mean of height $h$ after a proportion $\tau=0.5$. The calculations are based on 5,000 simulation runs.}
\end{threeparttable}
\end{table}
\end{landscape}

\begin{landscape}
\thispagestyle{empty}
\begin{table}
\begin{threeparttable}
\scriptsize
\begin{tabular}{ccccccccccccccccccccc}
 &  &  &  &  \multicolumn{5}{c}{$\alpha=2.5$}  &  & \multicolumn{5}{c}{$\alpha=2$}  &  & \multicolumn{5}{c}{$\alpha=1$} \\
  \cline{5-9}   \cline{11-15}   \cline{17-21}\\
 & $p$ & $n$ &  & $h=0$ & $h=0.5$ & $h=1$ & $h=-0.5$ & $h=-1$ &  & $h=0$ & $h=0.5$ & $h=1$ & $h=-0.5$ & $h=-1$ &  & $h=0$ & $h=0.5$ & $h=1$ & $h=-0.5$ & $h=-1$ \\
 \cline{2-21}
 \\
 \multirow{6}{*}{\rotatebox{90}{$H=0.6$}} & 0.1 & 300 &  & 0.088 & 0.086 & 0.085 & 0.104 & 0.127 &  & 0.097 & 0.099 & 0.092 & 0.105 & 0.145 &  & 0.100 & 0.148 & 0.198 & 0.183 & 0.069 \\
 &  & 500 &  & 0.078 & 0.071 & 0.071 & 0.083 & 0.129 &  & 0.078 & 0.072 & 0.075 & 0.105 & 0.203 &  & 0.092 & 0.155 & 0.238 & 0.254 & 0.078 \\
 &  & 1000 &  & 0.071 & 0.058 & 0.060 & 0.076 & 0.151 &  & 0.062 & 0.061 & 0.075 & 0.106 & 0.373 &  & 0.077 & 0.216 & 0.376 & 0.594 & 0.137 \\
 & 0.2 & 300 &  & 0.071 & 0.069 & 0.068 & 0.075 & 0.099 &  & 0.072 & 0.074 & 0.073 & 0.089 & 0.156 &  & 0.076 & 0.149 & 0.230 & 0.272 & 0.658 \\
 &  & 500 &  & 0.049 & 0.052 & 0.059 & 0.063 & 0.120 &  & 0.060 & 0.062 & 0.070 & 0.084 & 0.262 &  & 0.067 & 0.185 & 0.328 & 0.532 & 0.891 \\
 &  & 1000 &  & 0.044 & 0.050 & 0.052 & 0.056 & 0.160 &  & 0.047 & 0.055 & 0.072 & 0.096 & 0.521 &  & 0.062 & 0.295 & 0.550 & 0.912 & 0.997 \\
 &  & &  && & &  & &  &  &  & &&  &  &  & & &  &  \\
 \multirow{6}{*}{\rotatebox{90}{$H=0.7$}} & 0.1 & 300 &  & 0.112 & 0.100 & 0.110 & 0.124 & 0.139 &  & 0.114 & 0.102 & 0.104 & 0.125 & 0.168 &  & 0.106 & 0.146 & 0.191 & 0.176 & 0.061 \\
 &  & 500 &  & 0.093 & 0.091 & 0.092 & 0.100 & 0.145 &  & 0.096 & 0.096 & 0.091 & 0.110 & 0.206 &  & 0.095 & 0.178 & 0.245 & 0.251 & 0.082 \\
&  & 1000 &  & 0.084 & 0.074 & 0.075 & 0.092 & 0.176 &  & 0.077 & 0.076 & 0.091 & 0.116 & 0.393 &  & 0.084 & 0.236 & 0.388 & 0.591 & 0.122 \\
 & 0.2 & 300 & &  0.0868 & 0.081 & 0.073 & 0.087 & 0.113 &  & 0.080 & 0.085 & 0.097 & 0.100 & 0.185 &  & 0.084 & 0.148 & 0.246 & 0.297 & 0.653 \\
&  & 500 &  & 0.075 & 0.071 & 0.076 & 0.080 & 0.122 &  & 0.073 & 0.077 & 0.082 & 0.104 & 0.285 &  & 0.075 & 0.206 & 0.343 & 0.532 & 0.879 \\
 &  & 1000 &  & 0.068 & 0.068 & 0.073 & 0.084 & 0.187 &  & 0.068 & 0.066 & 0.088 & 0.114 & 0.571 &  & 0.066 & 0.305 & 0.567 & 0.922 & 0.994 \\
 &  & &  &&  & && &  &  & &  &  & &  & & &  &  &  \\
 \multirow{6}{*}{\rotatebox{90}{$H=0.8$}} & 0.1 & 300 &  & 0.140 & 0.135 & 0.132 & 0.136 & 0.152 &  & 0.130 & 0.141 & 0.126 & 0.134 & 0.186 &  & 0.116 & 0.164 & 0.211 & 0.182 & 0.062 \\
 &  & 500 &  & 0.131 & 0.122 & 0.126 & 0.130 & 0.166 &  & 0.108 & 0.123 & 0.137 & 0.135 & 0.216 &  & 0.101 & 0.185 & 0.283 & 0.251 & 0.073 \\
 &  & 1000 &  & 0.108 & 0.117 & 0.115 & 0.127 & 0.220 &  & 0.107 & 0.108 & 0.128 & 0.145 & 0.434 &  & 0.090 & 0.266 & 0.420 & 0.599 & 0.113 \\
 & 0.2 & 300 &  & 0.118 & 0.119 & 0.121 & 0.123 & 0.149 &  & 0.107 & 0.119 & 0.109 & 0.125 & 0.203 &  & 0.092 & 0.177 & 0.261 & 0.300 & 0.619 \\
 &  & 500 &  & 0.122 & 0.107 & 0.117 & 0.123 & 0.173 &  & 0.101 & 0.111 & 0.121 & 0.135 & 0.326 &  & 0.079 & 0.227 & 0.370 & 0.540 & 0.851 \\
 &  & 1000 &  & 0.098 & 0.113 & 0.113 & 0.137 & 0.259 &  & 0.100 & 0.111 & 0.129 & 0.157 & 0.625 &  & 0.076 & 0.332 & 0.588 & 0.922 & 0.994 \\
 &  & &  &  &  & & &  &  &  & & & &  &  & &  &  &  &  \\
 \multirow{6}{*}{\rotatebox{90}{$H=0.9$}} & 0.1 & 300 &  & 0.175 & 0.181 & 0.187 & 0.169 & 0.173 &  & 0.166 & 0.165 & 0.177 & 0.168 & 0.192 &  & 0.120 & 0.193 & 0.268 & 0.176 & 0.054 \\
 &  & 500 &  & 0.167 & 0.181 & 0.180 & 0.167 & 0.204 &  & 0.152 & 0.164 & 0.175 & 0.160 & 0.252 &  & 0.105 & 0.212 & 0.324 & 0.266 & 0.056 \\
 &  & 1000 &  & 0.175 & 0.180 & 0.192 & 0.195 & 0.289 &  & 0.156 & 0.170 & 0.200 & 0.202 & 0.509 &  & 0.104 & 0.295 & 0.492 & 0.602 & 0.080 \\
 & 0.2 & 300 &  & 0.169 & 0.171 & 0.174 & 0.166 & 0.184 &  & 0.140 & 0.155 & 0.179 & 0.163 & 0.261 &  & 0.102 & 0.206 & 0.349 & 0.304 & 0.501 \\
&  & 500 &  & 0.177 & 0.175 & 0.197 & 0.183 & 0.252 &  & 0.153 & 0.164 & 0.197 & 0.182 & 0.414 &  & 0.101 & 0.259 & 0.455 & 0.580 & 0.759 \\
 &  & 1000 &  & 0.207 & 0.215 & 0.229 & 0.236 & 0.377 &  & 0.175 & 0.200 & 0.222 & 0.243 & 0.755 &  & 0.100 & 0.412 & 0.684 & 0.941 & 0.966 \\
\end{tabular}
\caption{\footnotesize Rejection rates of the change-point test based on the statistic $\Gamma_n$, $k_n=\lfloor np\rfloor$,   for  LMSV  time series (Pareto distributed $\varepsilon_j,$ $j\geq 1$)   of length $n$ with Hurst parameter $H$, tail index $\alpha$  and a shift in the mean of height $h$ after a proportion $\tau=0.25$. The calculations are based on 5,000 simulation runs.}
\label{rejection rates tau=0.25}
\end{threeparttable}
\end{table}
\end{landscape}

\newpage

\section{Data}\label{sec:data}

The analysis of financial time series, such as stock market prices, usually focuses on log returns instead of the observed data itself. As an example, we consider the log returns of the daily closing indices  of Standard \& Poor's 500 (S{\&}P 500, in short)
defined by
\begin{align*}
L_t\defeq\log R_t, \ R_t\defeq\frac{P_t}{P_{t-1}},
\end{align*}
where $P_t$ denotes the value of the index on day $t$, in the
period from January 2008  to December 2008; see Figure \ref{S&P_500}.

\begin{figure}[htbp]
\begin{center}
\includegraphics[scale=0.6]{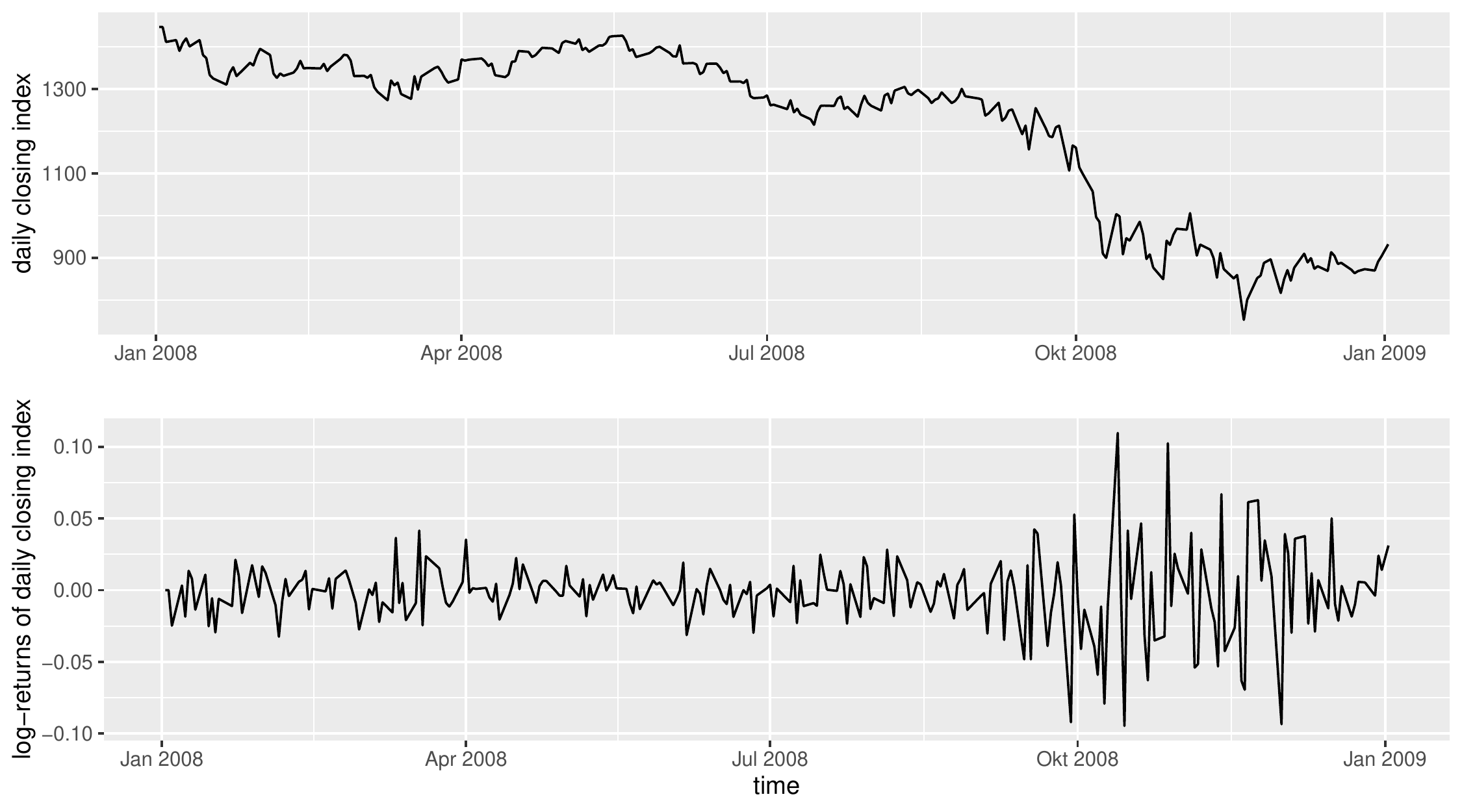}
\end{center}
\caption{Daily closing index of Standard \& Poor's 500 and its log returns
 from January 2008  to December 2008.
The data has been obtained from Google Finance.}
\label{S&P_500}
\end{figure}

Comparing the plots of the sample autocorrelation function
of the log returns and the sample autocorrelation function of their absolute values in Figure \ref{S&P_500_acf}, we observe  a phenomenon that is  often encountered in the context of financial data:
the log returns of the index appear to be uncorrelated, whereas  the absolute log returns  tend to be  highly correlated.

\begin{figure}[htbp]
\begin{center}
\includegraphics[scale=0.6]{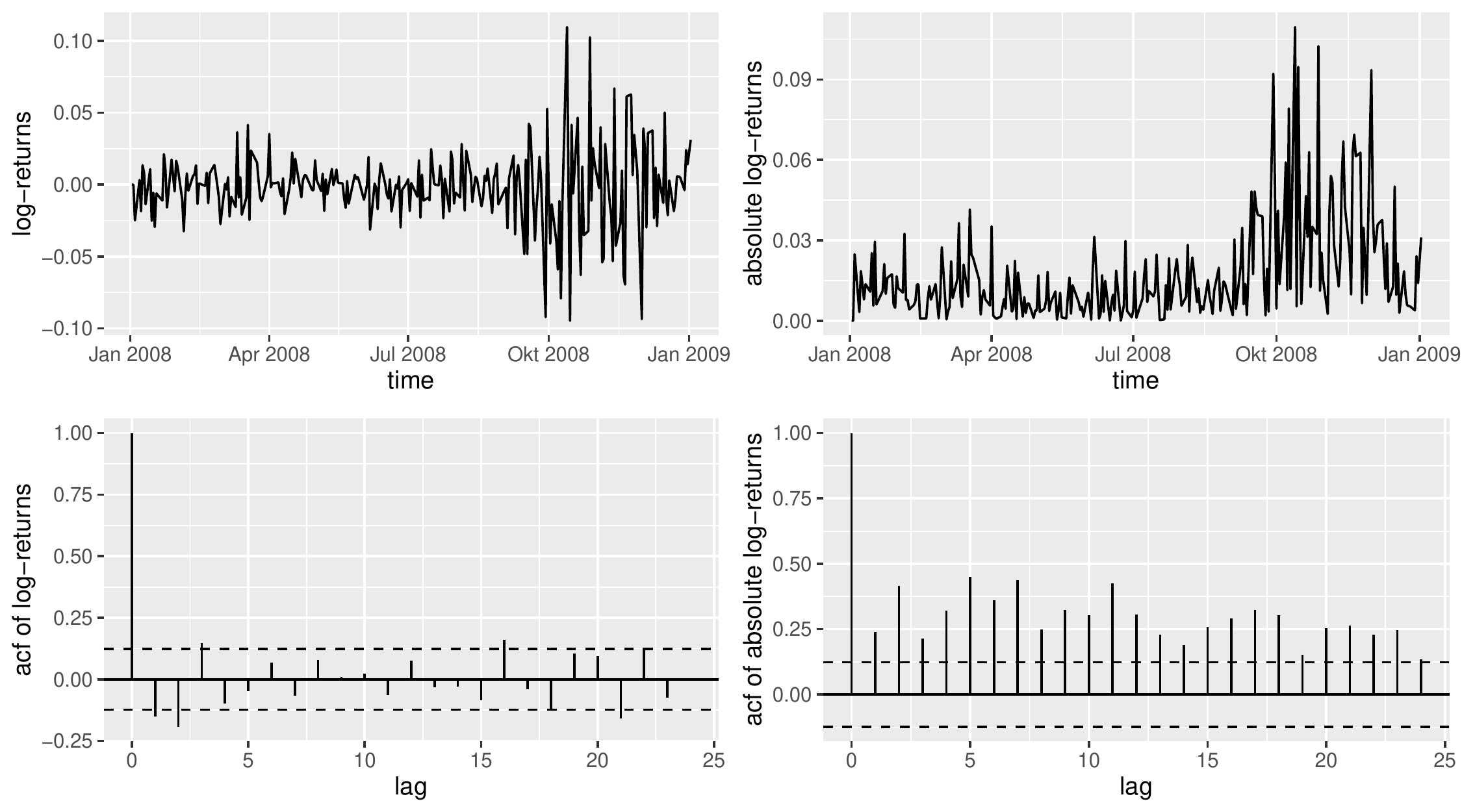}
\end{center}
\caption{Sample autocorrelation of the log returns and the absolute log returns of Standard \& Poor's 500  daily closing index from January 2005 to December 2010.
The two dashed horizontal lines mark the bounds for the 95\% confidence interval of the autocovariances under the assumption of data generated by white noise.
}
\label{S&P_500_acf}
\end{figure}

Moreover, the plot in Figure \ref{S&P_500} shows that the considered time series exhibits {\em volatility clustering}, meaning that large price changes, i.e., log returns with relatively large absolute values, tend to cluster. This  indicates that  observations are not independent across time, although the absence of linear autocorrelation
suggests that the dependence is nonlinear; see \cite{cont:2005}.

Another characteristic of financial time series is  the occurrence of heavy tails.
In particular,  probability distributions of  log returns
often exhibit
tails which are heavier than those of a normal distribution.
For the S{\&}P 500 data, this property is highlighted by
the Q-Q plot in Figure \ref{S&P_500_qqplot}.
\begin{figure}[htbp]
\begin{center}
\includegraphics[scale=0.6]{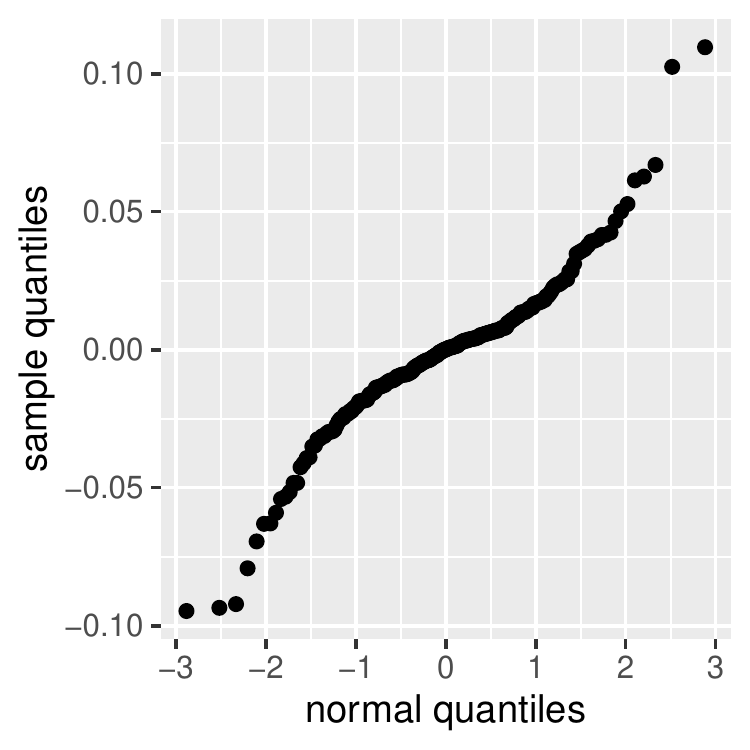}
\end{center}
\caption{Q-Q plot for the log returns of Standard \& Poor's 500  daily closing index from January 2005  to December 2010.}
\label{S&P_500_qqplot}
\end{figure}

All of the previously described features of financial data can be covered by  the LMSV model considered in our paper.

In view of the fact that the LMSV model  captures properties of the log returns of Standard \& Poor's 500 daily closing index, we are interested in analyzing the data with respect to a change in the tail index.

As in our simulations, we base the test decision on the statistic
defined in \eqref{eq:test-stat}.
We choose $k_n=\lfloor np\rfloor$, i.e.,  $p$ defines the proportion of the data that the estimation of the tail index is based on.
Choosing $p=0.1$, the value of the test statistic corresponds to $\widetilde\Gamma_n=1.467503$.
The  95\%-quantile of the limit distribution $\sup_{t\in [0,1]}
\left|B(t)-tB(1)\right|$ equals $1.3463348$. Choosing the critical value for the hypothesis test correspondingly, the value of $\Gamma_n$ therefore indicates a change-point in the tail index at a level of significance of 5\%.

A natural estimate for the change-point location is given by that point in time $k$, where $\Gamma_{k, n}$ attains its maximum.
For the considered data, this point in time corresponds  to September 16, 2008, i.e., one day after September 15, 2008, the day Lehman Brothers   filed for bankruptcy protection; see Figure \ref{S&P_500_with_cp}.

\begin{figure}[htbp]
\begin{center}
\includegraphics[scale=0.6]{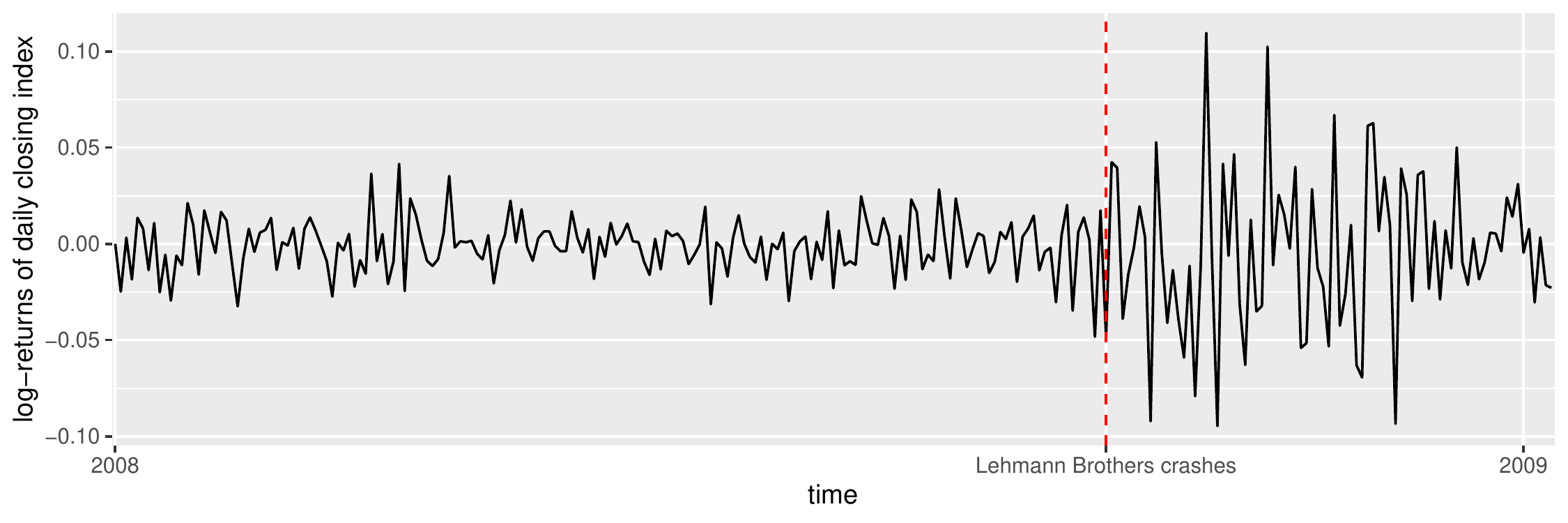}
\end{center}
\caption{Log returns of the daily closing index of Standard \& Poor's 500
 from January 2008  to December 2008.
The red dashed line indicates the estimated change-point location.}
\label{S&P_500_with_cp}
\end{figure}

\section{Proofs}\label{sec:proofs}

\subsection{Proof of Theorem~\ref{thm:TEP}}

\subsubsection{Decomposition of the tail empirical process}\label{subsubsec:decomposition_empirical_process}

Recall that
\begin{equation*}
e_n(s,t)=\left\{\widetilde{T}_n(s,t)-T(s,t)\right\},
\end{equation*}
where
\begin{equation*}
\widetilde{T}_n(s,t)\defeq\frac{1}{n\bar{F}(u_n)}\sum\limits_{j=1}^{\pe{nt}}\1\left\{X_j> u_ns\right\} \ \text{ and } \ T\pr{s,t}=ts^{-\alpha}.
\end{equation*}
To prove Theorem~\ref{thm:TEP}, we consider the following decomposition:
\begin{align*}
&e_n(s, t)
=\left\{\widetilde{T}_n(s, t)-T_n(s, t)\right\}+\left\{T_n(s, t)-T(s, t)\right\},
\intertext{where}
&T_n(s, t)\defeq\Es{\widetilde{T}_n(s, t)}=\frac{\lfloor nt\rfloor}{n}\frac{\overline{F}(u_ns)}{\overline{F}(u_n)}.
\end{align*}
Obviously, it holds that
$$\lim\limits_{n\rightarrow \infty}T_n(s, t)=T(s, t)$$
for $s>0$ and $t\in \left[0, 1\right]$. In particular, the convergence holds  uniformly on compact subsets of $(0,\infty)\times [0,1]$. Moreover,
for any $s_0>0$ it holds that
\begin{align*}
 \sup\limits_{s\geq s_0, \ t\in [0, 1]}\left|T_n(s, t)-T(s, t)\right|
&\leq \sup\limits_{s\geq s_0}\frac{\overline{F}(u_ns)}{\overline{F}(u_n)} \sup\limits_{t\in [0, 1]}\left|\frac{\lfloor nt\rfloor}{n}-t\right|+\sup\limits_{s\geq s_0}\left|\frac{\overline{F}(u_ns)}{\overline{F}(u_n)}-s^{-\alpha}\right|.
\end{align*}

Note that
\begin{align*}
 \sup\limits_{t\in [0, 1]}\left|\frac{\lfloor nt\rfloor}{n}-t\right|=o\pr{\frac{d_{n, r}}{n}+\frac{1}{\sqrt{n\overline{F}(u_n)}}}.
\end{align*}

Due to Proposition 2.8 in \cite{kulik:soulier:2011} and \ref{assump:eta_of_un}
we have $$
\sup_{s\geq s_0}\left|\frac{\overline{F}(u_ns)}{\overline{F}(u_n)}-s^{-\alpha}\right|=o\pr{\frac{d_{n, r}}{n}+\frac{1}{\sqrt{n\overline{F}(u_n)}}},
 $$
 which implies
 $$
\sup_{s\geq s_0, \ t\in [0, 1]}\left|T_n(s, t)-T(s, t)\right|=o\pr{\frac{d_{n, r}}{n}+\frac{1}{\sqrt{n\overline{F}(u_n)}}}.
$$
Since 
\begin{align*}
\lim_{n\to\infty}
\frac{\overline{F}_{\varepsilon}(u_n)}{\overline{F}(u_n)}=
\frac{1}{\Es{\sigma^{\alpha}(Y_1)}}
\end{align*}
by \ref{assump:moments:isigmaY} and Breiman's Lemma,
it therefore suffices to study weak convergence of the process
\begin{align*}
\widetilde{e}_n(s,t)
&=\frac{1}{n\overline{F}_{\varepsilon}(u_n)}\sum\limits_{j=1}^{\lfloor nt\rfloor}\left(\1\left\{X_j>u_ns\right\} -\overline{F}(u_ns)\right).
\end{align*}
For this, we consider the following decomposition:
\begin{align}\label{eq:decomposition}
&\widetilde{e}_n(s, t)\eqdef M_n(s, t)+R_n(s, t),
\end{align}
where
\begin{align*}
M_n(s, t)\defeq &\frac{1}{n\overline{F}_{\varepsilon}(u_n)}\sum\limits_{j=1}^{\lfloor nt\rfloor}\left(\1\left\{X_j>u_ns\right\} -
\Es{\1\left\{X_j>u_n s\right\} \left|\right. \mathcal{F}_{j-1} }\right),\\
R_n(s, t)\defeq &\frac{1}{n\overline{F}_{\varepsilon}(u_n)}\sum\limits_{j=1}^{\lfloor nt\rfloor}\left(
\Es{\1\left\{X_j>u_n s\right\} \left|\right. \mathcal{F}_{j-1}\right)-\overline{F}(u_ns)},
\end{align*}
and
\begin{equation}\label{eq:definition_of_Fj}
\mathcal F_j\defeq\sigma\pr{\varepsilon_k,\eta_k,k\in\Z,,k\leq j}.
\end{equation}
We call $M_n$ the \textit{martingale part}, while we refer to $R_n$ as the \textit{long-range dependent  part}.

\subsubsection{The long-range dependent  part}\label{sec:proof-lrd}
\begin{proposition}[Weak convergence of $R_n(s, t)$]\label{lem:weak_conv_of_LRD_part}
Under the assumptions of Theorem \ref{thm:TEP}, the following holds:
\begin{align*}
\frac{n}{d_{n, r}}R_n(s, t)\Rightarrow  s^{-\alpha}\frac{1}{r!}J_r(\Psi)Z_{r, H}(t),
\end{align*}
where $\Rightarrow$ denotes weak convergence in $D\left([1, \infty]\times
[0, 1]\right)$.
\end{proposition}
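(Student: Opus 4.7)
The core heuristic is that conditionally on $\mathcal{F}_{j-1}$, the tail event $\{X_j>u_n s\}$ depends only on $\varepsilon_j$ and on the $\mathcal{F}_{j-1}$-measurable quantity $\sigma(Y_j)$, so regular variation should reduce $R_n(s,t)$ to a partial sum of the subordinated Gaussian sequence $\Psi(Y_j)=\sigma^\alpha(Y_j)$, modulated by a deterministic factor in $s$ that converges to $s^{-\alpha}$. The non-central limit theorem for $\Psi(Y_j)$ then delivers the Hermite-process limit in $t$, and the deterministic $s$-factor passes through unchanged.

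First, using that $Y_j=\sum_{k=1}^\infty c_k\eta_{j-k}$ is $\mathcal{F}_{j-1}$-measurable and that $\varepsilon_j$ is independent of $\mathcal{F}_{j-1}$ (since $(\eta_k,\varepsilon_k)$ is i.i.d.\ across $k$), I would compute
$$\Es{\1\{X_j>u_n s\}\mid\mathcal{F}_{j-1}}=\overline{F}_\varepsilon\pr{u_n s/\sigma(Y_j)},\qquad \overline{F}(u_n s)=\Es{\overline{F}_\varepsilon\pr{u_n s/\sigma(Y_1)}}.$$
Setting $A_n(s)\defeq\overline{F}_\varepsilon(u_n s)/\overline{F}_\varepsilon(u_n)$, this yields the decomposition
$$R_n(s,t)=A_n(s)\cdot\frac{1}{n}\sum_{j=1}^{\lfloor nt\rfloor}\pr{\Psi(Y_j)-\Es{\Psi(Y_1)}}+R_n^{\star}(s,t),$$
where $R_n^\star$ collects the error from replacing the ratio $\overline{F}_\varepsilon(u_n s/\sigma(Y_j))/\overline{F}_\varepsilon(u_n s)$ by $\sigma^\alpha(Y_j)$ (and the analogous centering).

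For the main term, Potter's theorem yields $A_n(s)\to s^{-\alpha}$ uniformly on $[1,\infty]$, and the classical Taqqu--Dobrushin--Major non-central limit theorem applied to the Hermite-rank-$r$ subordination $\Psi(Y_j)$ (which is long-range dependent because $rD<1$ under Assumption~\ref{ass:main}) gives
$$\frac{1}{d_{n,r}}\sum_{j=1}^{\lfloor nt\rfloor}\pr{\Psi(Y_j)-\Es{\Psi(Y_1)}}\Rightarrow\frac{J_r(\Psi)}{r!}Z_{r,H}(t)\quad\text{in }D[0,1],\ H=1-rD/2.$$
Since $A_n$ is deterministic and converges uniformly, a Slutsky-type argument in the product Skorohod space combines the two factors (and the vanishing remainder $R_n^\star$) to yield weak convergence of $(n/d_{n,r})R_n(s,t)$ to $s^{-\alpha}(J_r(\Psi)/r!)Z_{r,H}(t)$ in $D([1,\infty]\times[0,1])$.

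The main obstacle is showing $\sup_{s\geq 1,\,t\in[0,1]}(n/d_{n,r})|R_n^\star(s,t)|\to 0$ in probability. The second-order regular variation assumption \ref{assump:second_order}, together with the Appendix Lemmas \ref{lem:control_ratio_Fz} and \ref{lem:control_ratio_difference_Fz}, provides the pointwise bound
$$\left|\frac{\overline{F}_\varepsilon(u_n s/\sigma(Y_j))}{\overline{F}_\varepsilon(u_n s)}-\sigma^\alpha(Y_j)\right|\leq C\,\sigma^{\alpha+\max(\rho,\alpha)}(Y_j)\,\eta^*(u_n s),$$
and since $\eta^*$ is decreasing, $\eta^*(u_n s)\leq\eta^*(u_n)$ uniformly in $s\geq 1$. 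The moment hypothesis \ref{assump:moments:isigmaY} is calibrated precisely so that the $L^1$-norm of the summand is finite; summing over $j\leq n$ yields an upper bound of order $n\eta^*(u_n)$, and the bias-rate assumption \ref{assump:eta_of_un} is exactly what forces $n\eta^*(u_n)=o(d_{n,r})$, giving the desired uniform negligibility. Notably, no chaining is required for $R_n^\star$ because the $s$-dependence factors cleanly through $A_n$ and the pointwise bound is already uniform in $s$ over $[1,\infty]$; this is what makes the long-range dependent part substantially easier to analyse than the martingale part.
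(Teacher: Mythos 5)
Your proposal is correct and follows essentially the same route as the paper: compute the conditional expectation $\overline{F}_\varepsilon(u_ns/\sigma(Y_j))$, split $R_n$ into a deterministic-in-$s$ factor times the centered partial sums of $\Psi(Y_j)=\sigma^\alpha(Y_j)$ (handled by Taqqu's non-central limit theorem) plus an error controlled in $L^1$, uniformly in $s\geq 1$, via the second-order regular variation lemmas and the bias condition $n\eta^*(u_n)=o(d_{n,r})$. The only cosmetic difference is that you factor out $A_n(s)=\overline{F}_\varepsilon(u_ns)/\overline{F}_\varepsilon(u_n)$ and let it converge to $s^{-\alpha}$, whereas the paper compares $\overline{F}_\varepsilon(u_ns/\sigma(y))/\overline{F}_\varepsilon(u_n)$ directly with $s^{-\alpha}\sigma^\alpha(y)$; your stated exponent $\alpha+\max(\rho,\alpha)$ in the error bound should really be $\alpha+\rho+\varepsilon$ as in Lemma A.1, but this is immaterial since the moment assumption covers both.
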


\begin{proof}
Note that
\begin{equation}\label{eq:conditional_expectation_martingale_part}
\Es{\1 \ens{X_j>u_ns }\left|\right.\mathcal{F}_{j-1}}=\overline{F}_{\varepsilon}\left(\frac{u_n s}{\sigma(Y_j)}\right)
\end{equation}
and 
\begin{align*}
\Es{\overline{F}_{\varepsilon}\left(\frac{u_ns}{\sigma(Y_j)}\right)}=\Es{\Es{\1\left\{X_j>u_n s\right\} \left|\right. \mathcal{F}_{j-1}}}
=\Es{\1\ens{X_j>u_n s }}=\overline{F}(u_ns).
\end{align*}
As a result, we can rewrite $R_n(s, t)$ as follows:
\begin{align*}
R_n(s, t)
&=\frac{1}{n\overline{F}_{\varepsilon}(u_n)}\sum\limits_{j=1}^{\lfloor nt\rfloor}\left(\overline{F}_{\varepsilon}\left(\frac{u_ns}{\sigma(Y_j)}\right)-
\Es{\overline{F}_{\varepsilon}\left(\frac{u_ns}{\sigma(Y_j)}\right)}\right)
=\frac{1}{n}\sum\limits_{j=1}^{\lfloor nt\rfloor}\left(\Psi_n(Y_j, s)-\Es{\Psi_n(Y_j, s)}\right),
\end{align*}
where
\begin{align}\label{eq:psin}
\Psi_n(y, s)=\frac{\overline{F}_{\varepsilon}\left(\frac{u_ns}{\sigma(y)}\right)}{\overline{F}_{\varepsilon}(u_n)}.
\end{align}
Due to regular variation of $\overline{F}_{\varepsilon}$, we have
\begin{align}\label{eq:psis}
\Psi_n(y, s)=\frac{\overline{F}_{\varepsilon}\left(\frac{u_ns}{\sigma(y)}\right)}{\overline{F}_{\varepsilon}(u_n)}
\sim
\left(\frac{s}{\sigma(y)}\right)^{-\alpha}=s^{-\alpha}\Psi(y).
\end{align}
Furthermore, it holds that
\begin{align}
R_n(s, t)
=&\frac{1}{n}\sum\limits_{j=1}^{\lfloor nt\rfloor}\left(\Psi_n(Y_j, s)-\Es{\Psi_n(Y_j, s)}\right)\notag\\
=&\frac{1}{n}\sum\limits_{j=1}^{\lfloor nt\rfloor}\left(\Psi_s(Y_j)-\Es{\Psi_s(Y_j)}\right) +\frac{1}{n}\sum\limits_{j=1}^{\lfloor nt\rfloor}\left(\Psi_n(Y_j, s)-\Psi_s(Y_j)\right)\notag\\
&+\frac{1}{n}\sum\limits_{j=1}^{\lfloor nt\rfloor}\left(\Es{\Psi_s(Y_j)}-\Es{\Psi_n(Y_j, s)}\right),\label{eq:Rn-summands}
\end{align}
where $\Psi_s(y)\defeq s^{-\alpha}\Psi(y)$.

As $\Es{\sigma^{2\alpha}(Y_1)}<\infty$ by \ref{assump:moments:isigmaY}, the functional non-central limit theorem of \cite{taqqu:1979} yields
\begin{align*}
\frac{n}{d_{n, r}}\frac{1}{n}\sum\limits_{j=1}^{\lfloor nt\rfloor}\left(\Psi_s(Y_j)-
\Es{\Psi_s(Y_j)}\right)=s^{-\alpha}\frac{n}{d_{n, r}}\frac{1}{n}\sum\limits_{j=1}^{\lfloor nt\rfloor}\left(\Psi(Y_j)-\Es{\Psi(Y_j)}\right)
\Rightarrow  s^{-\alpha}\frac{1}{r!}J_r(\Psi)Z_{r, H}(t).
\end{align*}

In the following, we will see that the first and the second summand in \eqref{eq:Rn-summands} are negligible.
For this, it suffices to show that
\begin{align*}
\lim\limits_{n\rightarrow\infty}\frac{\lfloor nt\rfloor}{d_{n, r}}\Es{\left|\Psi_n(Y_1, s)-s^{-\alpha}\Psi(Y_1)\right|}= 0.
\end{align*}
Note that
\begin{align*}
\Es{\left|\Psi_n(Y_1, s)-s^{-\alpha}\Psi(Y_1)\right|}
&= \int_{\mathbb{R}} \left|\frac{\overline{F}_{\varepsilon}\left(\frac{u_ns}{\sigma(y)}\right)}{\overline{F}_{\varepsilon}(u_n)}-\left(\frac{s}{\sigma(y)}\right)^{-\alpha}\right|\varphi(y)dy.
\end{align*}
Due to  second order regular variation of $\overline{F}_{\varepsilon}$, Lemma \ref{lem:control_ratio_Fz} implies that for any $\varepsilon >0$
\begin{align*}
\left|\frac{\overline{F}_{\varepsilon}\left(\frac{u_ns}{\sigma(y)}\right)}{\overline{F}_{\varepsilon}(u_n)}-\left(\frac{s}{\sigma(y)}\right)^{-\alpha}\right| \leq C\eta^*(u_n)\left(\frac{s}{\sigma(y)}\right)^{-\rho-\alpha}\left(\max\ens{\frac{s}{\sigma(y)},  \frac{\sigma(y)}{s}}\right)^{\varepsilon}.
\end{align*}
Thus, it follows that
\begin{align*}
\sup\limits_{s\geq s_0}\Es{\left|\Psi_n(Y_1, s)-s^{-\alpha}\Psi(Y_1)\right|}
&\leq  C\eta^{*}(u_n)s_0^{-\alpha-\rho -\varepsilon}\int_{\mathbb{R}} \sigma^{\alpha+\rho+\varepsilon}(y)\varphi(y)dy.
\end{align*}
By   \ref{assump:eta_of_un} and  \ref{assump:moments:isigmaY}, i.e., since $\eta^{*}(u_n)=o\pr{d_{n, r}/n}$ and $\Es{\sigma^{\alpha +\rho+\varepsilon}(Y_1)}<\infty$, it holds that
\begin{align*}
\sup\limits_{s\geq s_0}\Es{\left|\Psi_n(Y_1, s)-s^{-\alpha}\Psi(Y_1)\right|}=o\left(\frac{d_{n, r}}{n}\right).
\end{align*}
This completes the proof of Proposition \ref{lem:weak_conv_of_LRD_part}.
\end{proof}

\subsubsection{The martingale part}
The goal of this section is to prove the following proposition:

\begin{proposition}\label{prop:convergence_of_the_martingale_part}
Under the assumptions of Theorem \ref{thm:TEP}, for any $R>1$, the sequence $\sqrt{n\overline{F}_{\varepsilon}\pr{u_n}} M_n\pr{s,t}$, $n\geq 1$,
converges in distribution to $\sqrt{\Es{ \sigma^{\alpha}(Y_1)}}B({s^{-\alpha},t})$
in $D\pr{[1,R]\times [0,1]}$, where $B$ denotes a standard Brownian sheet.
\end{proposition}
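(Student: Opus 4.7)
The plan is to split weak convergence of $\sqrt{n\bar F_\varepsilon(u_n)}M_n$ in $D([1,R]\times[0,1])$ into finite-dimensional convergence plus tightness. The structural fact I would exploit throughout is that, for every fixed $s$, the process $t\mapsto M_n(s,t)$ is a c\`adl\`ag square-integrable martingale with respect to $(\mathcal F_{\lfloor nt\rfloor})$, with martingale differences
$$D_j(s)\defeq \1\{X_j>u_ns\}-\bar F_\varepsilon\!\left(\frac{u_ns}{\sigma(Y_j)}\right)$$
bounded by $1$ in absolute value thanks to \eqref{eq:conditional_expectation_martingale_part}.

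For finite-dimensional convergence I would apply a multivariate martingale central limit theorem (e.g.\ Theorem~3.2 of Hall--Heyde) to a finite collection $(s_i,t_i)_{i=1}^m$, reduced via the Cram\'er--Wold device to a scalar statement. The conditional Lindeberg condition is automatic since each normalised increment is of order $(n\bar F_\varepsilon(u_n))^{-1/2}=o(1)$. The key input is the convergence of the predictable covariation: using $\1\{X_j>u_ns_1\}\1\{X_j>u_ns_2\}=\1\{X_j>u_n(s_1\vee s_2)\}$ together with $\overline{F}_\varepsilon^2(u_n)=o(\overline{F}_\varepsilon(u_n))$, one finds
$$n\bar F_\varepsilon(u_n)\,\langle M_n(s_1,\cdot),M_n(s_2,\cdot)\rangle_t=\frac{1}{n}\sum_{j=1}^{\lfloor nt\rfloor}\Psi_n(Y_j,s_1\vee s_2)+O_p(\bar F_\varepsilon(u_n)),$$
with $\Psi_n$ as in \eqref{eq:psin}. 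Since $\Psi_n(y,s)\to s^{-\alpha}\sigma^\alpha(y)$ in $L^1(\varphi)$ (by the estimate already established in the proof of Proposition~\ref{lem:weak_conv_of_LRD_part}), and since $(\sigma^\alpha(Y_j))_j$ is stationary and ergodic, Birkhoff's theorem gives
$$n\bar F_\varepsilon(u_n)\,\langle M_n(s_1,\cdot),M_n(s_2,\cdot)\rangle_t\longrightarrow t\,(s_1\vee s_2)^{-\alpha}\,\Es{\sigma^\alpha(Y_1)},$$
matching $\Es{\sigma^\alpha(Y_1)}\,\Cov(B(s_1^{-\alpha},t),B(s_2^{-\alpha},t))$. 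Covariances across distinct times follow from orthogonality of martingale increments on disjoint index sets.

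For tightness I would apply a Bickel--Wichura-type moment criterion to the rectangular increment
$$\Delta_R M_n\defeq M_n(s_2,t_2)-M_n(s_1,t_2)-M_n(s_2,t_1)+M_n(s_1,t_1),\qquad R=[s_1,s_2]\times[t_1,t_2].$$
Written as a martingale partial sum in the $t$-index, Burkholder--Davis--Gundy yields
$$\Es{\left|\sqrt{n\bar F_\varepsilon(u_n)}\,\Delta_R M_n\right|^{2p}}\le C_p\,\Es{\left(\frac{1}{n\bar F_\varepsilon(u_n)}\sum_{j=\lfloor nt_1\rfloor+1}^{\lfloor nt_2\rfloor}(D_j(s_1)-D_j(s_2))^2\right)^{p}}.$$
Bounding $(D_j(s_1)-D_j(s_2))^2$ by $2\,\1\{u_ns_1<X_j\le u_ns_2\}$ plus the square of the compensator increment, invoking Breiman's lemma together with \ref{assump:second_order} to control the indicator expectation, and using the moment bound \ref{assump:moments:isigmaY} for the squared-compensator term, I would aim at
$$\Es{\left|\sqrt{n\bar F_\varepsilon(u_n)}\,\Delta_R M_n\right|^{2p}}\le C\,(t_2-t_1)^p\,(s_1^{-\alpha}-s_2^{-\alpha})^p.$$
After reparameterising $u=s^{-\alpha}$, this is precisely the rectangle moment bound of Bickel--Wichura; a chaining argument over a dyadic grid in $(s,t)$ then promotes it to tightness on the full $D([1,R]\times[0,1])$.

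The main obstacle is the tightness step, and specifically the two-parameter chaining. The $t$-direction is controlled sharply via BDG thanks to the martingale property, but the $s$-direction possesses no martingale structure and the $\mathcal F_{j-1}$-measurable compensators $\bar F_\varepsilon(u_ns/\sigma(Y_j))$ couple $s$ and $t$ through the long-memory Gaussian sequence $(Y_j)$. This is exactly why, as the authors stress in Section~1.4, the technique of \cite{kulik:soulier:2011} (which does not accommodate leverage) and the approach of \cite{bilayi:ivanoff:kulik:2019} (one-parameter) cannot be reused. The delicate point will be bounding the $L^p$-norm of $\sum_j(D_j(s_1)-D_j(s_2))^2$ uniformly over $s_1,s_2\in[1,R]$ with a constant independent of $n$, and carrying out the chaining so that the resulting modulus-of-continuity control holds simultaneously in both directions.
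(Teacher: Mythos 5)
Your finite-dimensional step is essentially the paper's own argument: a martingale CLT via the Cram\'er--Wold device, with the conditional Lindeberg condition trivial because each normalised increment is $O\bigl((n\overline{F}_{\varepsilon}(u_n))^{-1/2}\bigr)$, and the predictable covariation identified through $\Psi_n(Y_j,\cdot)\to s^{-\alpha}\sigma^{\alpha}(Y_j)$ and the ergodic theorem (the paper works with disjoint strips $I_{n,i}$ and Pollard's array CLT, but the substance is identical). That part is fine.

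The tightness step, however, contains a genuine gap, and it sits exactly where you place the ``main obstacle.'' The rectangle moment bound you aim for,
\begin{equation*}
\Es{\left|\sqrt{n\overline{F}_{\varepsilon}(u_n)}\,\Delta_R M_n\right|^{2p}}\leq C\,(t_2-t_1)^p\,(s_1^{-\alpha}-s_2^{-\alpha})^p ,\qquad p>1,
\end{equation*}
is false for small rectangles. The BDG/Rosenthal bound for the martingale sum produces, besides the square of the predictable quadratic variation, a diagonal term
\begin{equation*}
\frac{1}{(n\overline{F}_{\varepsilon}(u_n))^{p}}\sum_{j=\pe{nt_1}+1}^{\pe{nt_2}}\Es{\abs{D_j(s_1)-D_j(s_2)}^{2p}}\;\asymp\;\frac{(t_2-t_1)\,(s_1^{-\alpha}-s_2^{-\alpha})}{(n\overline{F}_{\varepsilon}(u_n))^{p-1}},
\end{equation*}
which is of order $\mu(R)/(n\overline{F}_{\varepsilon}(u_n))^{p-1}$ rather than $\mu(R)^p$; whenever $\mu(R)\lesssim (n\overline{F}_{\varepsilon}(u_n))^{-1}$ this dominates $\mu(R)^p$, so the Bickel--Wichura hypothesis cannot hold uniformly over all blocks. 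This is the standard obstruction to moment-criterion tightness for empirical-type processes with rare events, and it is precisely why the paper does \emph{not} take this route: it instead runs an adaptive chaining with Freedman's exponential martingale inequality, where the number of refinement levels $K_n=\pe{\log_2(\delta a_n C)}$ grows with $n$ so that the finest mesh is $\sim 1/a_n$ with $a_n=o\bigl(n/d_{n,r}+\sqrt{n\overline{F}_{\varepsilon}(u_n)}\bigr)$, and the sub-mesh oscillation is absorbed into the boundary terms of \eqref{eq:border_terms} using monotonicity of $s\mapsto\1\ens{X_j>u_ns}$ and a separate control of $\sum_j\Es{d_{j,i,k,n}^2\mid\mathcal F_{j-1}}$ via the ergodic theorem. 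To salvage your plan you would need either a grid-restricted version of Bickel--Wichura (moment bound only for rectangles of measure $\geq c/(n\overline{F}_{\varepsilon}(u_n))$) plus a separate monotonicity argument for sub-grid increments, or to switch to exponential inequalities as the paper does; as written, the claimed moment inequality cannot be established.
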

First, we establish convergence of the finite dimensional distributions. Then, we proceed with a proof of tightness. For the latter, we use
chaining arguments; see Section~\ref{subsec:tightness_chaining}).
\subsubsection*{The martingale part: Convergence of the finite dimensional distributions}\label{sec:mtg-fidi}

We have to prove that for all positive integers $d_1$ and 
$d_2$, all $1\leq s_{d_1}<\dots<s_{1}$ with $s_i\in \R$ and all 
$0\leq t_1<\dots <t_{d_2}\leq 1$, the vector with entries
$\sqrt{n\overline{F}_{\varepsilon}\pr{u_n}} M_n\pr{s_i,t_j}$, $1\leq i\leq d_1$,  $1\leq j\leq d_2$, converges 
in distribution to $\sqrt{\Es{ \sigma^{\alpha}(Y_1)}}B({s_i^{-\alpha},t_j})$,  $1\leq i\leq d_1$,  $1\leq j\leq d_2$. 
For this, it suffices to consider the case $d_1=d_2$. Indeed, if 
$d_1<d_2$, we include $s_i$, $d_1+1\leq i\leq d_2$, in a decreasing 
order between $s_{d_1}$ and $s_{d_1-1}$, i.e., such that 
$1\leq s_{d_1}< s_{d_2}<\dots<s_{d_1+1}<s_{d_1-1}<\dots <s_{1}$, and if
$d_2<d_1$, we include $t_j$, $d_2+1\leq j\leq d_1$ 
between $t_{d_2-1}$ and $t_{d_2}$ in an increasing order, i.e., 
$t_{d_2-1}<t_{d_2+1}<\dots<t_{d_1}<t_{d_2}\leq 1$. 
Letting $d=\max\ens{d_1,d_2}$, the convergence in distribution of 
the vector with entries $\sqrt{n\overline{F}_{\varepsilon}\pr{u_n}} M_n\pr{s_i,t_j}$,  $1\leq i\leq d$,  $1\leq j\leq d$,
 to $\sqrt{\Es{ \sigma^{\alpha}(Y_1)}}B({s_i^{-\alpha},t_j})$,  $1\leq i\leq d$,  $1\leq j\leq d$, implies  convergence of 
$\sqrt{n\overline{F}_{\varepsilon}\pr{u_n}} M_n\pr{s_i,t_j}$,  $1\leq i\leq d_1$,  $1\leq j\leq d_2$,
 to $\sqrt{\Es{ \sigma^{\alpha}(Y_1)}}B({s_i^{-\alpha},t_j})$,  $1\leq i\leq d_1$,  $1\leq j\leq d_2$.

Let $d\geq 1$ be an integer
and let $s_1, \ldots, s_d$ and $t_1, \ldots, t_d$ be real numbers such that
$1\leq s_d <\dots<s_1\leq R$ and $0\eqdef t_0\leq t_1<\dots<t_d\leq 1$.
Define t intervals $I_{n,1}\defeq\pr{s_1u_n, \infty}$, and for
$2\leq i\leq d$, let $I_{n,i}\defeq\left(s_iu_n,s_{i-1}u_n\right]$. Moreover,  define
 random variables
\begin{equation*}
Z_{i,j}^n\defeq\frac 1{\sqrt{n\overline{F}_{\varepsilon}\pr{u_n}}}\sum_{k=\pe{nt_{j-1}}+1}^{\pe{nt_{j}}}
\pr{
\mathbf{1}\ens{X_k\in I_{n,i}}-\Es{\mathbf{1}\ens{X_k\in I_{n,i}}\mid\mathcal F_{k-1}}
}
\end{equation*}
for $1\leq i,j\leq d$.

\begin{lemma}\label{lem:finite_dim_distrib_disjoint}
The sequence of random vectors $\pr{Z_{i,j}^n}_{1\leq i,j\leq d}$, $n\geq 1$, converges in
distribution to $\pr{Z_{i,j}}_{1\leq i,j\leq d}$, where the  random variables
$Z_{i,j}$, $1\leq i,j\leq d$, are independent, and for all $i,j\in\ens{1,\dots,d}$,
the random variable $Z_{i,j}$ is Gaussian, centered, and has variance $\Es{\sigma^\alpha\pr{Y_1}}\pr{t_j-t_{j-1}}
\pr{s_i^{-\alpha}-s_{i-1}^{-\alpha}}$, where $s_{0}^{-\alpha}=0$.
\end{lemma}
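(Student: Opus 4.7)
The plan is to establish this finite-dimensional joint limit by the Cram\'er--Wold device followed by a martingale central limit theorem, exploiting the martingale-difference structure already built into the definition of $Z_{i,j}^n$. Fix arbitrary coefficients $\lambda_{i,j}\in\R$, $1\leq i,j\leq d$, and consider
\begin{equation*}
S_n\defeq \sum_{1\leq i,j\leq d}\lambda_{i,j}Z_{i,j}^n=\frac{1}{\sqrt{n\overline{F}_{\varepsilon}(u_n)}}\sum_{k=1}^{n}\xi_{n,k},\quad \xi_{n,k}\defeq\sum_{i,j}\lambda_{i,j}\mathbf{1}\ens{\pe{nt_{j-1}}<k\leq\pe{nt_j}}\pr{\1\ens{X_k\in I_{n,i}}-\Es{\1\ens{X_k\in I_{n,i}}\mid\mathcal F_{k-1}}}.
\end{equation*}
By construction, $(\xi_{n,k})_{k\leq n}$ is a martingale difference array with respect to the filtration $(\mathcal F_k)$ defined in~\eqref{eq:definition_of_Fj}, because for each fixed $k$ at most one value of $j$ is active (the time blocks associated to the $t_j$'s are disjoint).

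First I would verify the conditional Lindeberg condition of the martingale CLT (e.g.\ Corollary~3.1 of Hall and Heyde). Since $\abs{\xi_{n,k}}\leq 2\sum_{i,j}\abs{\lambda_{i,j}}$ almost surely, each increment $\xi_{n,k}/\sqrt{n\overline{F}_{\varepsilon}(u_n)}$ is bounded by $C/\sqrt{n\overline{F}_{\varepsilon}(u_n)}\to 0$, so the Lindeberg condition follows trivially. Second, I would compute the limit of the sum of conditional variances
\begin{equation*}
V_n\defeq\frac{1}{n\overline{F}_{\varepsilon}(u_n)}\sum_{k=1}^n\Es{\xi_{n,k}^2\mid\mathcal F_{k-1}}.
\end{equation*}
Using the conditional-expectation identity~\eqref{eq:conditional_expectation_martingale_part} together with $\Var(\1\ens{A}\mid\mathcal F_{k-1})=p_k(1-p_k)$ where $p_k$ is the conditional probability, and using the disjointness of the intervals $I_{n,i}$, the cross terms in $i\neq i'$ contribute only a $-p_kp_{k'}$, which is of order $\overline{F}_{\varepsilon}^{2}(u_n)\sigma^{2\alpha}(Y_k)$ and hence negligible after division by $n\overline{F}_{\varepsilon}(u_n)$ thanks to \ref{assump:moments:isigmaY}. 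The leading term reduces to
\begin{equation*}
V_n=\sum_{i,j}\lambda_{i,j}^2\,\pr{s_i^{-\alpha}-s_{i-1}^{-\alpha}}\,\frac{1}{n}\sum_{k=\pe{nt_{j-1}}+1}^{\pe{nt_j}}\Psi_n(Y_k,s_i)\cdot\frac{1}{s_i^{-\alpha}-s_{i-1}^{-\alpha}}\overline{F}_{\varepsilon}(u_n)^{-1}\pr{\overline{F}_{\varepsilon}(u_ns_i/\sigma(Y_k))-\overline{F}_{\varepsilon}(u_ns_{i-1}/\sigma(Y_k))}+o_{\PP}(1),
\end{equation*}
and invoking the uniform approximation~\eqref{eq:psis} (justified, as in the proof of Proposition~\ref{lem:weak_conv_of_LRD_part}, by Lemma~\ref{lem:control_ratio_Fz} together with \ref{assump:eta_of_un}--\ref{assump:moments:isigmaY}), it suffices to show that
\begin{equation*}
\frac{1}{n}\sum_{k=\pe{nt_{j-1}}+1}^{\pe{nt_j}}\sigma^{\alpha}(Y_k)\xrightarrow{\PP}(t_j-t_{j-1})\Es{\sigma^{\alpha}(Y_1)}.
\end{equation*}
This follows from the ergodic theorem applied to the stationary Gaussian sequence $(Y_k)$, which is ergodic since $\gamma_Y(k)\to 0$.

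Combining these ingredients, $V_n\to\sum_{i,j}\lambda_{i,j}^2(t_j-t_{j-1})(s_i^{-\alpha}-s_{i-1}^{-\alpha})\Es{\sigma^{\alpha}(Y_1)}$ in probability, so the martingale CLT yields that $S_n$ converges in distribution to a centered Gaussian with variance equal to $\sum_{i,j}\lambda_{i,j}^2\Var(Z_{i,j})$ with the announced variances $\Var(Z_{i,j})=\Es{\sigma^{\alpha}(Y_1)}(t_j-t_{j-1})(s_i^{-\alpha}-s_{i-1}^{-\alpha})$. Since the limiting variance is diagonal in $(i,j)$, the Cram\'er--Wold device delivers joint convergence to independent centered Gaussian random variables $Z_{i,j}$ with the claimed variances. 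The main obstacle is the conditional-variance step: cleanly handling the transition from the conditional tail probabilities to $\sigma^{\alpha}(Y_k)(s_i^{-\alpha}-s_{i-1}^{-\alpha})\overline{F}_{\varepsilon}(u_n)$ uniformly over the thresholds, and then invoking ergodicity of $(Y_k)$ to obtain the law-of-large-numbers limit. Everything else (Lindeberg and the bookkeeping across the disjoint blocks in $j$ and disjoint levels $I_{n,i}$) is routine once this step is in place.
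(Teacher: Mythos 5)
Your proposal is correct and follows essentially the same route as the paper: Cram\'er--Wold, then Pollard's martingale CLT for triangular arrays, with the conditional Lindeberg condition handled by the uniform bound $\abs{\xi_{n,k}}\leq C$ and the conditional variance split into a leading term (controlled via Lemma~\ref{lem:control_ratio_difference_Fz} and the ergodic theorem applied to $\sigma^{\alpha}(Y_k)$) plus negligible quadratic and cross terms of order $\overline{F}_{\varepsilon}(u_n)$. The paper organizes the variance computation into the explicit remainders $R_{n,1},R_{n,2},R_{n,3}$, but the substance is identical to yours.
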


This lemma directly provides convergence of the finite dimensional distributions, after
having applied the continuous mapping theorem to the function
\begin{align*}
\pr{x_{i,j}}_{i,j=1}^d\mapsto \pr{\sum_{i'=1}^i\sum_{j'=1}^jx_{i',j'}   }_{i,j=1}^d.
\end{align*}

 \begin{proof}[Proof of Lemma~\ref{lem:finite_dim_distrib_disjoint}]
 By the Cram\'{e}r-Wold device, we have to prove that for each collection of real numbers
 $a_{i,j}$, $1\leq i,j\leq d$, the sequence $\sum_{i,j=1}^da_{i,j}Z_{i,j}^n$, $n\geq 1$,
 converges to a normal distribution with mean zero and variance
 \begin{equation}\label{eq:limiting-variance}
 \sigma^2\defeq\Es{\sigma^\alpha\pr{Y_1}}\sum_{i,j=1}^d \pr{t_j-t_{j-1}}
\pr{s_i^{-\alpha}-s_{i-1}^{-\alpha}}a_{i,j}^2.
 \end{equation}

We will prove this by applying the following central limit theorem for martingale difference
 arrays. For this, recall that $\Delta_{n,k}$, $n\geq 1$, $1\leq k\leq n$, is a martingale difference
array with respect to the filtration $\mathcal F_{n,k}$, $n\geq 1$,  $0\leq k\leq n$, if
for all $n$, $\mathcal F_{n,k}\subset \mathcal F_{n,k+1}$, $1\leq k\leq n-1$,
$\Delta_{n,k}$ is $\mathcal F_{n,k}$-measurable and $\Es{\Delta_{n,k}
\mid \mathcal F_{n,k-1}}=0$.
\begin{theorem}[Theorem VIII. 1 in \cite{pollard:1984}]
Let $\pr{\Delta_{n,k}}_{n\geq 1, 1\leq k\leq n}$ be a martingale difference
array with respect to the filtration $\pr{\mathcal F_{n,k}}_{n\geq 1,0\leq k\leq n}$,
such that $\Delta_{n,k}$ is square integrable for all $n$ and $k$. Moreover, assume that
\begin{enumerate}
\item
for each positive $\epsilon$,
\begin{equation}\label{itm_condition_1_TLC_martingale_arrays}
\sum_{k=1}^n\Es{\Delta_{n,k}^2
\mathbf{1}\ens{\abs{\Delta_{n,k}}>\epsilon}
\mid \mathcal F_{n,k-1}}\to 0\mbox{ in probability};
\end{equation}
\item
\begin{equation}\label{itm_condition_2_TLC_martingale_arrays}
\sum_{k=1}^n\Es{\Delta_{n,k}^2
\mid \mathcal F_{n,k-1}}\to\sigma^2\mbox{ in probability}.
\end{equation}
\end{enumerate}
Then, the sequence $\sum_{k=1}^n\Delta_{n,k}$, $n\geq 1$, converges
in distribution to a normal law with mean zero and variance $\sigma^2$ defined in \eqref{eq:limiting-variance}.
\end{theorem}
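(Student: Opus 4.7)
The plan is to prove this martingale central limit theorem via McLeish's (1974) characteristic function technique: analyse the complex product
\[
\Pi_n(u)\defeq\prod_{k=1}^n\pr{1+iu\Delta_{n,k}},\qquad u\in\R,
\]
which is engineered so that (i) $\Es{\Pi_n(u)}=1$ for every $n$ by the martingale property, and (ii) $\Pi_n(u)$ is close to $\exp\pr{iuS_n+\tfrac{u^2}{2}V_n'}$, where $S_n\defeq\sum_{k=1}^n\Delta_{n,k}$ and $V_n'\defeq\sum_{k=1}^n\Delta_{n,k}^2$. If $V_n'$ can be replaced by the deterministic limit $\sigma^2$, these two facts force $\Es{\exp\pr{iuS_n}}\to\exp\pr{-u^2\sigma^2/2}$ for every $u\in\R$, and L\'evy's continuity theorem then gives $S_n\Rightarrow\mathcal N\pr{0,\sigma^2}$.

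The first step would be a truncation that reduces the problem to uniformly bounded increments. For $\delta>0$, introduce the conditionally centered truncation
\[
\widetilde{\Delta}_{n,k}\defeq\Delta_{n,k}\1\ens{\abs{\Delta_{n,k}}\leq\delta}-\Es{\Delta_{n,k}\1\ens{\abs{\Delta_{n,k}}\leq\delta}\mid\mathcal F_{n,k-1}},
\]
which is again a martingale difference array, now bounded by $2\delta$. The conditional Lindeberg condition \eqref{itm_condition_1_TLC_martingale_arrays} guarantees that $\sum_k\pr{\Delta_{n,k}-\widetilde{\Delta}_{n,k}}\to 0$ in probability and that \eqref{itm_condition_2_TLC_martingale_arrays} is preserved for the truncated array. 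From here one may assume $\abs{\Delta_{n,k}}\leq\delta$ uniformly.

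Two facts about $\Pi_n(u)$ then close the argument. A short induction conditioning on $\mathcal F_{n,n-1}$ gives $\Es{\Pi_n(u)\mid\mathcal F_{n,n-1}}=\Pi_{n-1}(u)\pr{1+iu\Es{\Delta_{n,n}\mid\mathcal F_{n,n-1}}}=\Pi_{n-1}(u)$, hence $\Es{\Pi_n(u)}=1$. On the other hand, Taylor-expanding the principal branch of $\log\pr{1+iu\Delta_{n,k}}$ around zero (valid because $\abs{u\Delta_{n,k}}\leq\abs{u}\delta$ is small) yields $\log\pr{1+iu\Delta_{n,k}}=iu\Delta_{n,k}+\tfrac{u^2}{2}\Delta_{n,k}^2+O\pr{\abs{u\Delta_{n,k}}^3}$; summing and exponentiating produces
\[
\Pi_n(u)=\exp\pr{iuS_n+\tfrac{u^2}{2}V_n'}\pr{1+o_{\PP}(1)}.
\]
A quick martingale argument on the differences $\Delta_{n,k}^2-\Es{\Delta_{n,k}^2\mid\mathcal F_{n,k-1}}$, whose sum of conditional second moments is dominated by $\delta^2\sum_k\Es{\Delta_{n,k}^2\mid\mathcal F_{n,k-1}}$ (bounded via \eqref{itm_condition_2_TLC_martingale_arrays}), shows $V_n'-\sum_k\Es{\Delta_{n,k}^2\mid\mathcal F_{n,k-1}}\to 0$ in probability, hence $V_n'\to\sigma^2$ in probability. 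Combining everything, $\Es{\exp\pr{iuS_n}\exp\pr{\tfrac{u^2}{2}V_n'}}\to 1$, and passing $V_n'$ to its probability limit inside the expectation delivers the required convergence of characteristic functions.

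The main obstacle is precisely that last passage: replacing the random variable $V_n'$ by $\sigma^2$ inside an expectation requires uniform integrability of the family $\pr{\Pi_n(u)}_{n\geq 1}$, which is not automatic. The natural estimate $\abs{\Pi_n(u)}^2=\prod_{k=1}^n\pr{1+u^2\Delta_{n,k}^2}\leq\exp\pr{u^2V_n'}$ suggests the standard resolution: localise via a stopping time $\tau_n\defeq\inf\ens{k\leq n: V_k'>K}\wedge n$ for a large constant $K$, run the McLeish identity on the stopped product $\Pi_{\tau_n}(u)$ where $\abs{\Pi_{\tau_n}(u)}\leq \exp\pr{u^2K/2}$, and then exploit $\PP\pr{\tau_n<n}\to 0$, which follows from $V_n'\to\sigma^2$, to recover the full product in the limit. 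Letting $K\to\infty$ at the end erases the localisation and completes the proof.
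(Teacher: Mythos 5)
This statement is not proved in the paper at all: it is imported verbatim as Theorem~VIII.1 of \cite{pollard:1984} and used as a black box inside the proof of Lemma~\ref{lem:finite_dim_distrib_disjoint}, so there is no in-paper argument to compare against. Your proposal is, in substance, the proof given in the cited source: Pollard's proof of that theorem is exactly the McLeish (1974) characteristic-function product technique you describe --- conditional truncation to a bounded martingale difference array, the identity $\Es{\prod_{k=1}^n\pr{1+iu\Delta_{n,k}}}=1$, replacement of the predictable quadratic variation by $\sum_k\Delta_{n,k}^2$, and a stopping-time localisation to make the exponential factor uniformly integrable --- so your attempt is correct in outline and matches the canonical argument.

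Two small bookkeeping corrections. First, for a \emph{fixed} truncation level $\delta$ the cubic remainder in your expansion is only $O_{\PP}\pr{\delta}$, not $o_{\PP}(1)$, since $\sum_k\abs{u\Delta_{n,k}}^3\leq 2\abs{u}^3\delta V_n'$ with $V_n'\to\sigma^2$; the conclusion therefore requires letting $\delta\to 0$ \emph{after} $n\to\infty$ (or choosing $\delta_n\to 0$ slowly, which the conditional Lindeberg condition permits). Second, the localisation constant $K$ need not tend to infinity: any fixed $K>\sigma^2$ suffices, because $V_k'$ is nondecreasing in $k$, so $\PP\pr{\tau_n<n}=\PP\pr{V_n'>K}\to 0$ directly from \eqref{itm_condition_2_TLC_martingale_arrays}. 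Neither point affects the validity of the overall strategy.
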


 We express $\widetilde{Z}_n\defeq\sum_{i,j=1}^da_{i,j}Z_{i,j}^n$ as a sum of martingale differences. For $1\leq i\leq d$, define
  \begin{equation*}
 D_{n,i,k}\defeq \frac 1{\sqrt{n\overline{F}_{\varepsilon}\pr{u_n}}}
 \left(\mathbf 1\ens{X_k\in I_{n,i}} -\Es{
 \mathbf 1\ens{ X_k\in I_{n,i}}\mid \mathcal F_{k-1}}  \right).
 \end{equation*}
If for some $j\in \{1, \ldots, d\}$ the integer $k$ satisfies
$\pe{nt_j}+1\leq k\leq \pe{nt_{j+1}}$, then we define
 \begin{equation*}
\Delta_{n,k}\defeq\sum_{i=1}^da_{i,j}D_{n,i,k},
\end{equation*}
and if $\pe{nt_{d}}+1\leq k\leq n$, we define $\Delta_{n,k}\defeq 0$. 

In this way,
$\widetilde{Z}_n=\sum_{k=1}^n\Delta_{n,k}$ and defining $\mathcal F_{n,j}$ as
the $\sigma$-algebra $\mathcal F_j$ given by \eqref{eq:definition_of_Fj},
 the array $\Delta_{n,k}$, $n\geq 1$, $1\leq k\leq n$, is a square integrable
 martingale difference array
 with respect to the filtration $\mathcal F_{n,k}$, $n\geq 1$, $0\leq k\leq n$.
 Let us check \eqref{itm_condition_1_TLC_martingale_arrays}. Observe that for all
 $1\leq k\leq n$ and $1\leq i\leq d$, $\abs{D_{n,i,k}}\leq 2
 \pr{n\overline{F}_{\varepsilon}\pr{u_n}}^{-1/2}$. Hence, for all $1\leq k\leq n$,
 \begin{equation*}
 \abs{\Delta_{n,k}}\leq 2\sum_{i,j=1}^d\abs{a_{i,j}}
 \pr{n\overline{F}_{\varepsilon}\pr{u_n}}^{-1/2}.
 \end{equation*}
 Consequently, for a fixed $\epsilon$, when $n$ is such that $
 2\sum_{i,j=1}^d\abs{a_{i,j}}
 \pr{n\overline{F}_{\varepsilon}\pr{u_n}}^{-1/2}<\epsilon$, the indicator
 $\mathbf{1}\ens{\abs{\Delta_{n,k}}>\epsilon}$ vanishes and hence
 \eqref{itm_condition_1_TLC_martingale_arrays} holds.

 Let us check \eqref{itm_condition_2_TLC_martingale_arrays}. It suffices to prove that
 for all $j\in \ens{1,\dots,d}$
 \begin{equation}\label{eq:sum-of-squares}
 \Es{\abs{
\sum_{k=\pe{nt_{j-1}}+1   } ^{\pe{nt_{j}}}
 \Es{\Delta_{n.k}^2\mid \mathcal F_{k-1}} -\Es{\sigma^\alpha\pr{Y_1}}\pr{t_j-t_{j-1}}\sum_{i=1}^d
 a_{i,j}^2\pr{s_i^{-\alpha}-s_{i-1}^{-\alpha}}
 }}\to 0.
 \end{equation}
 To this aim, we decompose $\Es{\Delta_{n.k}^2\mid \mathcal F_{k-1}}$:
 \begin{multline*}
 \Es{\Delta_{n.k}^2\mid \mathcal F_{k-1}}=
 \sum_{i=1}^d a_{i,j}^2\Es{D_{n,i,k}^2\mid\mathcal F_{k-1}}
 +2\sum_{1\leq i_1<i_2\leq d}a_{i_1,j}a_{i_2,j}
 \Es{D_{n,i_1,k}D_{n,i_2,k}\mid\mathcal F_{k-1}}.
 \end{multline*}
 Observe that using \eqref{eq:conditional_expectation_martingale_part}
 \begin{multline*}
 \Es{D_{n,i,k}^2\mid\mathcal F_{k-1}}=\frac 1{n\overline{F}_{\varepsilon}\pr{u_n}}
  \pr{\overline{F}_{\varepsilon}\pr{\frac{u_ns_i}{\sigma\pr{Y_k}}     }
  -\overline{F}_{\varepsilon}\pr{\frac{u_ns_{i-1}}{\sigma\pr{Y_k}}     }
   }\\
   -\frac 1{n\overline{F}_{\varepsilon}\pr{u_n}}
  \pr{\overline{F}_{\varepsilon}\pr{\frac{u_ns_i}{\sigma\pr{Y_k}}     }
  -\overline{F}_{\varepsilon}\pr{\frac{u_ns_{i-1}}{\sigma\pr{Y_k}}     }
   }^2,
 \end{multline*}
where we set $\overline{F}_{\varepsilon}\pr{s_0x}=0$ for all $x$. Moreover, it holds that
 \begin{multline*}
  \Es{D_{n,i_1,k}D_{n,i_2,k}\mid\mathcal F_{k-1}}\\
  =\frac 1{n\overline{F}_{\varepsilon}\pr{u_n}}
  \pr{\overline{F}_{\varepsilon}\pr{\frac{u_ns_{i_1}}{\sigma\pr{Y_k}}     }
  -\overline{F}_{\varepsilon}\pr{\frac{u_ns_{i_1-1}}{\sigma\pr{Y_k}}     }
   }\pr{\overline{F}_{\varepsilon}\pr{\frac{u_ns_{i_2}}{\sigma\pr{Y_k}}}
  -\overline{F}_{\varepsilon}\pr{\frac{u_ns_{i_2-1}}{\sigma\pr{Y_k}}     }
   }.
 \end{multline*}
Hence, the expression on the left hand side of \eqref{eq:sum-of-squares} is bounded by
 \begin{equation*}
 \Es{\abs{R_{n,1}}}+\Es{\abs{R_{n,2}}}+\Es{\abs{R_{n,3}}},
 \end{equation*}
 where
 \begin{align*}
 R_{n,1}\defeq \sum_{i=1}^d
 a_{i,j}^2\frac 1{n\overline{F}_{\varepsilon}\pr{u_n}}
\sum_{k=\pe{nt_{j-1}}+1   } ^{\pe{nt_{j}}}   \pr{\overline{F}_{\varepsilon}\pr{\frac{u_ns_i}{\sigma\pr{Y_k}}     }
  -\overline{F}_{\varepsilon}\pr{\frac{u_ns_{i-1}}{\sigma\pr{Y_k}}     }
   }
\\-\Es{\sigma^\alpha\pr{Y_1}}\pr{t_j-t_{j-1}}\sum_{i=1}^d
 a_{i,j}^2\pr{s_i^{-\alpha}-s_{i-1}^{-\alpha}},
 \end{align*}
 \begin{align*}
&R_{n,2}\defeq \sum_{i=1}^da_{i,j}^2 \frac 1{n\overline{F}_{\varepsilon}\pr{u_n}}
\sum_{k=\pe{nt_{j-1}}+1   } ^{\pe{nt_{j}}}   \pr{\overline{F}_{\varepsilon}\pr{\frac{u_ns_i}{\sigma\pr{Y_k}}     }
  -\overline{F}_{\varepsilon}\pr{\frac{u_ns_{i-1}}{\sigma\pr{Y_k}}     }
   }^2, \\
&R_{n,3}\defeq \frac 1{n\overline{F}_{\varepsilon}\pr{u_n}}\sum_{k=\pe{nt_{j-1}}+1   } ^{\pe{nt_{j}}}\sum_{1\leq i_1<i_2\leq d}a_{i_1,j}a_{i_2,j}\\
&\phantom{=}\times \pr{\overline{F}_{\varepsilon}\pr{\frac{u_ns_{i_1}}{\sigma\pr{Y_k}}     }
  -\overline{F}_{\varepsilon}\pr{\frac{u_ns_{i_1-1}}{\sigma\pr{Y_k}}     }
   }\pr{\overline{F}_{\varepsilon}\pr{\frac{u_ns_{i_2}}{\sigma\pr{Y_k}}}
  -\overline{F}_{\varepsilon}\pr{\frac{u_ns_{i_2-1}}{\sigma\pr{Y_k}}     }
   }\;.
\end{align*}
 Here we omit the dependence on $j\in\ens{1,\dots,d}$ in $R_{n,1}$ and
 $R_{n,2}$ in order to ease notations.
 We start with $R_{n,1}$. Using stationarity, it suffices to prove that for all $i\in\ens{1,
 \dots,d}$, $\Es{\abs{R_{n,1,i}}}\to 0$, where
 \begin{multline*}
R_{n,1,i}\defeq
 \frac 1{n\overline{F}_{\varepsilon}\pr{u_n}}
\sum_{k=1   } ^{ \pe{nt_{j}}-\pe{nt_{j-1}}}   \pr{\overline{F}_{\varepsilon}\pr{\frac{u_ns_i}{\sigma\pr{Y_k}}     }
  -\overline{F}_{\varepsilon}\pr{\frac{u_ns_{i-1}}{\sigma\pr{Y_k}}     }
   }\\
   -\Es{\sigma^\alpha\pr{Y_1}}\pr{t_j-t_{j-1}} \pr{s_i^{-\alpha}-s_{i-1}^{-\alpha}}.
 \end{multline*}
 Applying Lemma~\ref{lem:control_ratio_difference_Fz} with $t=u_n$,
 $a=s_i/\sigma\pr{Y_k}$,  $b=s_{i-1}/\sigma\pr{Y_k}$ and $\epsilon=1$, we get
 \begin{multline*}
 \abs{R_{n,1,i}}\leq \abs{
\frac 1n\sum_{k=1   } ^{\pe{nt_{j}}-\pe{nt_{j-1}}}\pr{
\frac{\sigma^\alpha\pr{Y_k}}{s_{i}^\alpha}-
\frac{\sigma^\alpha\pr{Y_k}}{s_{i-1}^\alpha}
} -\pr{t_j-t_{j-1}} \pr{s_i^{-\alpha}-s_{i-1}^{-\alpha}}\Es{\sigma^\alpha\pr{Y_1}}}\\
+C\eta^*\pr{u_n}
\frac 1n\sum_{k=1   } ^{\pe{nt_{j}}-\pe{nt_{j-1}}}
\pr{\max\ens{\frac{\sigma\pr{Y_k}}{s_i}, 1}  }^{  \alpha+\rho+1}\frac{s_i-s_{i-1}}{\sigma\pr{Y_k}}.
 \end{multline*}
By \ref{assump:moments:isigmaY} and \ref{assump:moments:inverse_sigmaY_2+delta} we have
 \begin{align*}
 \Es{\frac{\left(\max\ens{\sigma\pr{Y_0}, 1}\right)^{ \alpha+ \rho+1}}{\sigma\pr{Y_0}}}<\infty.
 \end{align*}

Using the ergodic theorem for the first term, the fact that $\eta^*(u_n)\to 0$ allows us to conclude that
$\Es{\abs{R_{n,1}}}\to 0$ as $n$ goes to infinity.

The treatment of $R_{n,2}$ and $R_{n,3}$ is the same: we take expectations
and bound one of the factors $ \overline{F}_{\varepsilon}\pr{\frac{u_ns_i}{\sigma\pr{Y_k}}     }
  -\overline{F}_{\varepsilon}\pr{\frac{u_ns_{i-1}}{\sigma\pr{Y_k}}     }
$ using \eqref{eq:control_ratio_Fz_simplified} in  Lemma~\ref{lem:control_ratio_difference_Fz}  when $i=1$ and
\eqref{eq:control_ratio_difference_Fz_simplified} in Lemma~\ref{lem:control_ratio_difference_Fz}  when $2\leq i\leq d$. We then conclude
by the dominated convergence theorem.

This finishes the proof of Lemma~\ref{lem:finite_dim_distrib_disjoint}.
 \end{proof}

 \subsubsection*{The martingale part: Tightness}
 \label{subsec:tightness_chaining}

\begin{lemma}\label{lem:convergence_of_the_martingale_part}
Under the assumptions of Theorem \ref{thm:TEP}, for any $R>1$, the sequence 
\begin{align*}
\sqrt{n\overline{F}_{\varepsilon}\pr{u_n}} M_n\pr{s,t}, \ n\geq 1,
\end{align*}
is tight 
in $D\pr{[1,R]\times [0,1]}$.
\end{lemma}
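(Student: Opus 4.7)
The plan is to establish tightness in $D([1,R]\times[0,1])$ by proving a Bickel--Wichura-type rectangle-increment moment bound on $\sqrt{n\overline F_\varepsilon(u_n)}M_n$, exploiting the one-dimensional martingale structure in the time index $j$ together with the regular variation estimates of Lemma~\ref{lem:control_ratio_difference_Fz}. For a rectangle $B=(s_1,s_2]\times(t_1,t_2]\subset [1,R]\times[0,1]$, the rectangle increment of $M_n$ equals
\begin{equation*}
M_n(B)=\frac{1}{n\overline F_\varepsilon(u_n)}\sum_{j=\pe{nt_1}+1}^{\pe{nt_2}}\bigl(D_{n,j}(s_1)-D_{n,j}(s_2)\bigr),
\end{equation*}
where $D_{n,j}(s)=\mathbf 1\{X_j>u_ns\}-\Es{\mathbf 1\{X_j>u_ns\}\mid\mathcal F_{j-1}}$. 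For each fixed pair $s_1<s_2$ the summands form a bounded martingale difference sequence with respect to $(\mathcal F_j)$, and their conditional variance is controlled, via \eqref{eq:conditional_expectation_martingale_part}, by $\overline F_\varepsilon(u_ns_1/\sigma(Y_j))-\overline F_\varepsilon(u_ns_2/\sigma(Y_j))$.

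The core step is a moment bound. Burkholder's inequality reduces $\Es{|\sqrt{n\overline F_\varepsilon(u_n)}M_n(B)|^{2p}}$ to a sum of the $p$th moment of the quadratic variation and a maximal-jump term; since each $|D_{n,j}(s_1)-D_{n,j}(s_2)|\leq 2$, the maximal term is of order $[n\overline F_\varepsilon(u_n)]^{-p+1}\mu(B)$ and is therefore negligible once $n\overline F_\varepsilon(u_n)\to\infty$. For the quadratic variation, I would apply \eqref{eq:control_ratio_difference_Fz_simplified} in Lemma~\ref{lem:control_ratio_difference_Fz} to replace $\overline F_\varepsilon(u_ns_1/\sigma(Y_j))-\overline F_\varepsilon(u_ns_2/\sigma(Y_j))$ by $(s_1^{-\alpha}-s_2^{-\alpha})\sigma^\alpha(Y_j)/\overline F_\varepsilon(u_n)^{-1}$ plus a remainder of order $\eta^*(u_n)$, and use Rosenthal's inequality for the i.i.d.\ sum (in law, not in joint distribution) of $\sigma^\alpha(Y_j)$ together with \ref{assump:moments:isigmaY} to obtain, for every integer $p\geq 2$,
\begin{equation*}
\Es{\bigl|\sqrt{n\overline F_\varepsilon(u_n)}\,M_n(B)\bigr|^{2p}}\leq C_p\,\bigl((s_1^{-\alpha}-s_2^{-\alpha})(t_2-t_1)\bigr)^p+o(1),
\end{equation*}
which has the form $C_p\,\mu(B)^p$ with $\mu$ a finite product measure on $[1,R]\times[0,1]$.

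Once such a rectangular moment bound with $p>1$ is in place, tightness in the two-parameter Skorohod space follows by the Bickel--Wichura criterion (Theorem 3 of Bickel and Wichura, 1971), upon also observing that $M_n(s,0)=0$ and that $\lim_{s\to\infty}M_n(s,\cdot)=0$ uniformly in $t$, so that no boundary mass is lost. Alternatively, the same rectangle moment bound can be inserted into a direct dyadic chaining on $[1,R]\times[0,1]$: one refines the grid by a factor of two in each coordinate, invokes Markov's inequality to bound $\max|M_n(B)|$ over the finitely many rectangles at each scale, and sums a geometric series over scales to control the Skorohod $J_1$-modulus.

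The main obstacle is the moment estimate on the rectangle increment. The one-parameter chaining arguments of \cite{kulik:soulier:2011} and \cite{bilayi:ivanoff:kulik:2019} do not deliver a genuinely \emph{rectangular} bound with exponent strictly greater than one on $\mu(B)$, which is precisely what the two-parameter Bickel--Wichura criterion demands. The combination of Burkholder, Rosenthal and the uniform regular variation control of Lemma~\ref{lem:control_ratio_difference_Fz} is needed here specifically to turn the raw $L^2$ estimate $C\,\mu(B)$ into an $L^{2p}$ estimate of the form $C_p\,\mu(B)^p$; the delicate part is checking that the error produced by replacing $\overline F_\varepsilon(u_ns/\sigma(y))/\overline F_\varepsilon(u_n)$ by $(s/\sigma(y))^{-\alpha}$ is absorbed into a product-measure bound uniformly in $s\in[1,R]$, which is where the moment hypotheses \ref{assump:moments:isigmaY} and \ref{assump:moments:inverse_sigmaY_2+delta} enter in a crucial way.
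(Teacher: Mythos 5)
There is a genuine gap, and it sits exactly where you flag the ``main obstacle'': the rectangle moment bound
\begin{equation*}
\Es{\bigl|\sqrt{n\overline F_\varepsilon(u_n)}\,M_n(B)\bigr|^{2p}}\leq C_p\,\mu(B)^p
\end{equation*}
cannot hold uniformly over all blocks $B=(s_1,s_2]\times(t_1,t_2]$, because the maximal-jump term in Burkholder--Rosenthal is not negligible in the sense the Bickel--Wichura criterion requires. After normalization that term is of order $\mu(B)\,(n\overline F_\varepsilon(u_n))^{-(p-1)}$, which you correctly compute, but which dominates $\mu(B)^p$ (indeed dominates $\mu(B)^{1+\beta}$ for every $\beta>0$) as soon as $\mu(B)\lesssim (n\overline F_\varepsilon(u_n))^{-1}$. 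The criterion demands a bound vanishing like a superlinear power of $\mu(B)$ for \emph{each fixed $n$} and \emph{all} neighboring blocks, including arbitrarily small ones; an additive error that is $o(1)$ in $n$ but not controlled by $\mu(B)$ does not qualify. This is the classical obstruction for empirical-process-type sums of indicators: a single indicator jump contributes $(n\overline F_\varepsilon(u_n))^{-1/2}$ to the increment no matter how small the block is. The grid refinement of Bickel--Wichura (requiring the bound only for blocks with vertices on a mesh on which the process is constant) rescues the $t$-direction, where $M_n$ jumps only at $t=j/n$, but not the $s$-direction, where the jump locations $X_j/u_n$ are random; handling the within-cell oscillation in $s$ then requires an additional approximation or monotonicity argument at a scale tied to $n\overline F_\varepsilon(u_n)$ --- at which point one is effectively redoing a multi-scale chaining argument. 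This is precisely why the paper abandons polynomial moment bounds and instead runs a chaining scheme with Freedman's exponential inequality, using the refining partitions $x_i(k)$ built from the quantiles of $\overline F_\varepsilon$ with depth $K_n=\lfloor\log_2(\delta a_nC)\rfloor$ calibrated so that the finest scale matches $n\overline F_\varepsilon(u_n)$: the Bernstein-type correction $\tfrac23 cx$ in Freedman's bound is exactly what absorbs the bounded-jump contribution that defeats the moment approach.

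Two secondary points. First, Rosenthal's inequality is not available for $\sum_j\sigma^\alpha(Y_j)$, since the $Y_j$ are long-range dependent Gaussians, not independent; for $p=2$ you can replace it by Jensen's inequality, $\Es{(m^{-1}\sum_{j\le m}\sigma^\alpha(Y_j))^2}\leq\Es{\sigma^{2\alpha}(Y_1)}$, which is finite under \ref{assump:moments:isigmaY}, but for $p>2$ you would need $\Es{\sigma^{\alpha p}(Y_1)}<\infty$, which is not assumed. Second, even granting the conditional-variance part of your estimate (which is fine, via Lemma~\ref{lem:control_ratio_difference_Fz} and \ref{assump:eta_of_un}), the argument collapses on the jump term alone, so the proposal as written does not establish tightness.
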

\begin{proof}
 Define
\begin{align*}
m_n\pr{s,t}\defeq\sqrt{n\overline{F}_{\varepsilon}\pr{u_n}} M_n\pr{s,t}.
\end{align*}
In order to prove tightness of $m_n\pr{s,t}$,  we validate the following tightness criterion: for all  $\epsilon>0$
\begin{equation*}
\lim_{\delta\to 0}\limsup_{n\to \infty}
\PP\pr{\sup_{\substack{\abs{s_2-s_1}<\delta\\ 1\leq s_1,s_2\leq R  } }
  \sup_{\substack{\abs{t_2-t_1}<\delta\\ 0\leq t_1,t_2\leq 1  }}\abs{
m_n\pr{s_2,t_2}-m_n\pr{s_1,t_1}
   }>\epsilon     }=0.
\end{equation*}
Writing
\begin{equation*}
m_n\pr{s_2,t_2}-m_n\pr{s_1,t_1}=m_n\pr{s_2,t_2}-m_n\pr{s_1,t_2}+m_n\pr{s_1,t_2}
-m_n\pr{s_1,t_1},
\end{equation*}
it suffices to show
\begin{equation}\label{eq:tightness_Mn_part1}
\lim_{\delta\to 0}\limsup_{n\to \infty}
\PP\pr{\sup_{\substack{\abs{s_2-s_1}<\delta\\ 1\leq s_1,s_2\leq R  } }
  \sup_{t\in [0,1]}\abs{
m_n\pr{s_2,t}-m_n\pr{s_1,t}
   }>\epsilon     }=0
\end{equation}
and
\begin{equation}\label{eq:tightness_Mn_part2}
\lim_{\delta\to 0}\limsup_{n\to \infty}
\PP\pr{\sup_{ 1\leq s \leq R  }
  \sup_{\substack{\abs{t_2-t_1}<\delta\\ 0\leq t_1,t_2\leq 1  }}\abs{
m_n\pr{s,t_2}-m_n\pr{s,t_1}
   }>\epsilon     }=0.
\end{equation}
\subsubsection{Proof of \eqref{eq:tightness_Mn_part1}.}

In order to prove \eqref{eq:tightness_Mn_part1}, we apply a chaining technique.

For this, we define the  intervals
\begin{align*}
I_{1,k}\defeq[1+2k\delta, 1+2(k+1)\delta] \ \text{ and } \ I_{2, k}\defeq[1+(2k+1)\delta, 1+(2(k+1)+1)\delta]
\end{align*}
for $k=0, \ldots, L_{\delta}\defeq\lfloor\frac{R-1}{2\delta}\rfloor$.
Then, the expression inside $\PP$ in \eqref{eq:tightness_Mn_part1} is bounded by
\begin{align}\label{eq:decomposition-1}
\max\limits_{0\leq k\leq L_{\delta}}\sup_{\substack{s_1, s_2\in I_{1, k}\\  } }
  \sup_{t\in [0,1]}\abs{
m_n\pr{s_2,t}-m_n\pr{s_1,t}
   }
   +\max\limits_{0\leq k\leq L_{\delta}}\sup_{\substack{s_1, s_2\in I_{2, k}\\  } }
  \sup_{t\in [0,1]}\abs{
m_n\pr{s_2,t}-m_n\pr{s_1,t}
   }.
\end{align}
In the following, we consider the first summand only, since for the second summand  analogous considerations hold, i.e., 
it remains to show that
\begin{equation*}
\lim_{\delta\to 0}\limsup_{n\to \infty}
\PP\pr{\max\limits_{0\leq k\leq L_{\delta}}\sup_{\substack{s_1, s_2\in I_{1, k}\\  } }
  \sup_{t\in [0,1]}\abs{
m_n\pr{s_2,t}-m_n\pr{s_1,t}
   }>\epsilon     }=0.
\end{equation*}
For this, it suffices to show that
\begin{align*}
\lim_{\delta\to 0}\limsup_{n\to \infty}\frac{1}{\delta}
\max_{0\leq k\leq L_\delta}
\PP\pr{\sup_{\substack{s_1, s_2\in I_{1, k}\\  } }
  \sup_{t\in [0,1]}\abs{
m_n\pr{s_2,t}-m_n\pr{s_1,t}
   }>\epsilon     }=0.
\end{align*}
We write $I_{1, k}=[a_k, a_{k+1}]$, i.e., $a_k\defeq 1+2k\delta$ and $a_{k+1}\defeq 1+2(k+1)\delta$.
Note that
\begin{align*}
\sup_{\substack{s_1, s_2\in I_{1, k}\\  } }
  \sup_{t\in [0,1]}\abs{
m_n\pr{s_2,t}-m_n\pr{s_1,t}
   }\leq 2\sup\limits_{x\in [0, 2\delta]}  \sup_{t\in [0,1]}\abs{
m_n\pr{a_k,t}-m_n\pr{a_k+x,t}
   }.
\end{align*}
Define refining partitions $x_i(k)$ for $k=0, \ldots, K_n$ with $K_n\rightarrow\infty$, for $n\rightarrow\infty$, by
\begin{align*}
x_i(k)\defeq\sup\left\{x\in [1, R]\left|\right. \frac{\overline{F}_{\varepsilon}(u_nx)}{\overline{F}_{\varepsilon}(u_n)}\geq\frac{i}{2^k}\delta\right\}, \ i=0, \ldots, \lfloor 2^k \delta^{-1}\rfloor,
\end{align*}
and choose  $i_k(x)$ such that
\begin{align*}
a_k+x\in \left(x_{i_k(x)+1}(k), x_{i_k(x)}(k)\right].
\end{align*}

From the definition of $m_n$ we obtain
\begin{align*}
\left| m_n(y, t)-m_n(x, t)\right|
=\left|\frac{1}{\sqrt{n\overline{F}_{\varepsilon}(u_n)}}\sum\limits_{j=1}^{\lfloor nt\rfloor}\left(\1\ens{u_nx<X_j\leq u_ny}-
\Es{\1\ens{u_nx<X_j\leq u_n y }| \mathcal{F}_{j-1} }\right)\right|.
\end{align*}
Then, with $m_n(x)\defeq\sup_{t\in [0, 1]}\left|m_n(x, t)\right|$ and $\overline{m}_n(x, y)\defeq\sup_{t\in [0, 1]}\left|m_n(y, t)-m_n(x, t)\right|$,
it follows that
\begin{align*}
&\sup\limits_{t\in [0,1 ]}\abs{m_n(a_k, t)-m_n(a_k+x, t)}\\
&\leq \overline{m}_n(a_k, x_{i_{K_n}(x)+1}(K_n))+\sum\limits_{k=1}^{K_n} \overline{m}_n(x_{i_{k}(x)+1}(k), x_{i_{k-1}(x)+1}(k-1)) + \overline{m}_n(x_{i_0(x)+1}(0), a_k+x).
\end{align*}
As a result, we have
\begin{align}\label{eq:border_terms}
&\PP\pr{\sup\limits_{x\in [0, 2\delta]}\sup\limits_{t\in [0,1 ]}\abs{m_n(a_k, t)-m_n(a_k+x, t)}>\epsilon}\notag\leq
\PP\left( \sup\limits_{x\in [0, 2\delta]}  \overline{m}_n(a_k, x_{i_{K_n}(x)+1}(K_n))>\frac{\epsilon}{4}\right)\notag\\
&+
\sum\limits_{k=1}^{K_n} \PP\left(\sup\limits_{x\in [0, 2\delta]}\overline{m}_n(x_{i_{k}(x)+1}(k), x_{i_{k-1}(x)+1}(k-1)) >\frac{\epsilon}{(k+3)^2}\right)\notag\\
&+ \PP\left(\sup\limits_{x\in [0, 2\delta]}\overline{m}_n(x_{i_0(x)+1}(0), a_k+x)>\epsilon-\sum\limits_{k=0}^{\infty}\frac{\epsilon}{(k+3)^2}-\frac{\epsilon}{4}\right).
\end{align}
Since
\begin{align*}
\sum\limits_{k=0}^{\infty}\frac{\epsilon}{(k+3)^2}\leq \frac{\epsilon}{2},
\end{align*}
it follows that
\begin{align*}
\PP\left(\sup\limits_{x\in [0, 2\delta]}\overline{m}_n(x_{i_0(x)+1}(0), a_k+x)>\epsilon-\sum\limits_{k=0}^{\infty}\frac{\epsilon}{(k+3)^2}-\frac{\epsilon}{4}\right)
\leq \PP\left(\sup\limits_{x\in [0, 2\delta]}\overline{m}_n(x_{i_0(x)+1}(0), a_k+x)>\frac{\epsilon}{4}\right).
\end{align*}
Additionally, we conclude that
\begin{align*}
&\sum\limits_{k=1}^{K_n} \PP\left(\sup\limits_{x\in [0, 2\delta]}\overline{m}_n(x_{i_{k}(x)+1}(k), x_{i_{k-1}(x)+1}(k-1)) >\frac{\epsilon}{(k+3)^2}\right)\\
&\leq \sum\limits_{k=1}^{K_n}\sum\limits_{i=0}^{\lfloor 2^k \delta^{-1}\rfloor} \PP\left(\overline{m}_n(x_{i+1}(k), x_{i}(k)) >\frac{\epsilon}{(k+3)^2}
\right).
\end{align*}

For further estimation, we 
 use Freedman's inequality: if
$\pr{d_j,\mathcal F_j}$,  $j\geq 1$, is a martingale difference sequence such that
$\sup_j \abs{d_j}\leq c$, where $c$ is a positive constant, then for all $x,y>0$
\begin{multline*}
\PP\pr{\ens{\max_{1\leq \ell\leq n}\abs{\sum_{j=1}^\ell d_j}>x   }\cap
\ens{\sum_{j=1}^n \Es{d_j^2\mid \mathcal F_{j-1 }}\leq y     }}\leq
2\exp\pr{-\frac{x^2}{2\pr{y+\frac 2{3}cx}  }}\\
\leq 2 \max\ens{\exp\pr{-\frac{x^2}{4y}}, \exp\pr{-\frac 38 x/c}  };
\end{multline*}
see Theorem~1.6 in \cite{freedman:1975}.

For this purpose, we define
\begin{align*}
d_{j, i, k, n}\defeq\1 \ens{u_nx_{i+1}(k)< X_j\leq u_nx_{i}(k) }-\Es{\1 \ens{u_nx_{i+1}(k) <X_j\leq u_nx_{i}(k)}\left|\right. \mathcal{F}_{j-1}}.
\end{align*}
Then, it follows that
\begin{equation*}
\overline{m}_n(x_{i+1}(k), x_{i}(k))=
\max\limits_{1\leq \ell\leq n}\left|\frac{1}{\sqrt{n\overline{F}_{\varepsilon}(u_n)}}\sum\limits_{j=1}^{\ell}d_{j, i, k, n}\right|.
\end{equation*}

Furthermore, for $B>0$, which is chosen later,  define
\begin{align*}
y_{n, k, i}\defeq n\overline{F}_{\varepsilon}(u_n)B\left(\frac{\overline{F}_{\varepsilon}\left(u_nx_{i+1}(k)\right)}{\overline{F}_{\varepsilon}(u_n)}-\frac{\overline{F}_{\varepsilon}\left(u_nx_i(k)\right)}{\overline{F}_{\varepsilon}(u_n)}\right)
\end{align*}
and
\begin{align*}
z_{n, k}\defeq\frac{\epsilon\sqrt{n\overline{F}_{\varepsilon}(u_n)}}{(k+3)^2}.
\end{align*}
Since $\overline{F}_{\varepsilon}$  is  continuous and since we can assume without loss of generality that $\overline{F}_{\varepsilon}$ is ultimately strictly decreasing,
it holds that
\begin{align*}
\frac{\overline{F}_{\varepsilon}\left(u_nx_{i+1}(k)\right)}{\overline{F}_{\varepsilon}(u_n)}-\frac{\overline{F}_{\varepsilon}\left(u_nx_i(k)\right)}{\overline{F}_{\varepsilon}(u_n)}
= \frac{\delta}{2^k}, 
\end{align*}
and, consequently, 
\begin{align*}
y_{n, k, i}=n\overline{F}_{\varepsilon}(u_n)B\frac{\delta}{2^k}.
\end{align*}

For an estimation by Freedman's inequality, we have to specify $K_n$.
Choosing $K_n\defeq\lfloor\log_2\left(\delta
a_n C\right)\rfloor$ for some  constant $C$, where
$a_n$, $n\geq 1$, is a sequence with
$a_n\rightarrow \infty$ and $a_n=o\left(\frac{n}{d_{n, r}}+\sqrt{n\overline{F}_{\varepsilon}(u_n)}\right)$,
it follows (with $c=2$) that
\begin{align*}
\frac{z_{n, k}^2}{4y_{n, k, i}}=\frac{\epsilon^22^k}{4B(k+3)^4\delta}\leq \frac{\epsilon^2C\sqrt{n\overline{F}_{\varepsilon}(u_n)}}{4B(k+3)^4}
\leq\frac{3}{8}\frac{\epsilon\sqrt{n\overline{F}_{\varepsilon}(u_n)}}{2(k+3)^2}=\frac{3}{8}\frac{z_{n, k}}{c}
\end{align*}
for a corresponding choice of $C$.
Therefore, we have
\begin{align*}
\max\ens{\exp\pr{-\frac{z_{n, k}^2}{4y_{n, k, i}}}, \exp\pr{-\frac 38 z_{n, k}/c}  }=
\exp\pr{-\frac{z_{n, k}^2}{4y_{n, k, i}}}.
\end{align*}

As a result, an application of Freedman's inequality yields
\begin{align*}
&\sum\limits_{k=1}^{K_n}\sum\limits_{i=0}^{\lfloor 2^k \delta^{-1}\rfloor} \PP\left(\overline{m}_n(x_{i+1}(k), x_{i}(k)) >\frac{\epsilon}{(k+3)^2}, \sum\limits_{j=1}^n
\Es{d_{j, i, k, n}^{2}\left|\right.\mathcal{F}_{j-1}}\leq y_{n, k, i}\right)\\
&\leq\sum\limits_{k=1}^{K_n}\sum\limits_{i=0}^{\lfloor 2^k \delta^{-1}\rfloor} \PP\left(\max\limits_{1\leq \ell\leq n}\left|\sum\limits_{j=1}^{\ell}d_{j, i, k, n}\right|>\frac{\epsilon \sqrt{n\overline{F}_{\varepsilon}(u_n)}}{(k+3)^2}, \sum\limits_{j=1}^n\Es{d_{j, i, k, n}^{2}\left|\right.\mathcal{F}_{j-1}}\leq y_{n, k, i}\right)\\
&\leq  \sum\limits_{k=1}^{K_n}\left(\lfloor 2^k \delta^{-1}\rfloor +1\right)
\exp\left(-\frac{\epsilon^2}{4(k+3)^4}\frac{2^k}{B\delta}\right).
\end{align*}
Noting that there exists a constant $D>0$  such that for all $k$
\begin{align*}
\left(\lfloor 2^k \delta^{-1}\rfloor +1\right)
\exp\left(-\frac{\epsilon^2}{4(k+3)^4\delta}\frac{2^k}{B}\right)
\leq  \lfloor 2^\frac{k}{2} \delta^{-1}\rfloor
\exp\left(-D\frac{\epsilon^2}{4}\frac{ 2^{\frac{k}{2}}}{B\delta}\right),
\end{align*}
elementary calculations yield
\begin{align*}
&\sum\limits_{k=1}^{K_n}\left(\lfloor 2^k \delta^{-1}\rfloor +1\right)
\exp\left(-\frac{\epsilon^2}{4(k+3)^4}\frac{2^k}{B\delta}\right)\\
&\leq\frac{4B}{\epsilon^2\log(2)}\left(\exp\left(-D\frac{\epsilon^2}{4B\delta}\right)-\exp\left(-D\frac{\epsilon^2}{4B\delta}2^{\frac{K_n}{2}}\right)\right).
\end{align*}
It follows that
\begin{align*}
\lim\limits_{\delta\rightarrow 0}\lim\limits_{n\rightarrow \infty}\frac{1}{\delta}\sum\limits_{k=1}^{K_n}\sum\limits_{i=0}^{\lfloor 2^k \delta^{-1}\rfloor}
\PP\left(\max\limits_{1\leq \ell\leq n}\left|\sum\limits_{j=1}^{\ell}d_{j, i, k, n}\right|>z_{n, k}, \sum\limits_{j=1}^n
\Es{d_{j, i, k, n}^{2}\left|\right.\mathcal{F}_{j-1}}\leq y_{n, k, i}\right)=0.
\end{align*}
 Therefore, in order to finish the proof of \eqref{eq:tightness_Mn_part1},
it remains to show that
\begin{align*}
&\lim\limits_{\delta\rightarrow 0}\lim\limits_{n\rightarrow \infty}\frac{1}{\delta}\sum\limits_{k=1}^{K_n}\sum\limits_{i=0}^{\lfloor 2^k \delta^{-1}\rfloor}
\PP\left( \sum\limits_{j=1}^n\Es{d_{j, i, k, n}^{2}\left|\right.\mathcal{F}_{j-1}}> y_{n, k, i}\right)=0.
\end{align*}

Since for an event $A$ and a $\sigma$-algebra $\mathcal F$
\begin{equation*}
\Es{  \pr{\1\pr{A}-\Es{\1\pr{A}\mid \mathcal F}   }^2\mid \mathcal F  }
=\Es{\mathbf 1\pr{A}\mid \mathcal F}-\left(\Es{\mathbf 1\pr{A}\mid \mathcal F}\right)^2\leq
\Es{\mathbf 1\pr{A} \mid \mathcal F},
\end{equation*}
it holds that
\begin{equation*}
\Es{d_{j, i, k, n}^{2}\left|\right.\mathcal{F}_{j-1}}\leq
\overline{F}_{\varepsilon}\pr{\frac{u_nx_{i+1}(k)}{\sigma\pr{Y_j}}}-
\overline{F}_{\varepsilon}\pr{\frac{u_nx_{i}(k)}{\sigma\pr{Y_j}}}.
\end{equation*}

Therefore, we arrive at
\begin{align*}
\PP\left(\sum\limits_{j=1}^n\Es{d_{j, i, k, n}^{2}\left|\right.\mathcal{F}_{j-1}}> y_{n, k, i}\right)\leq
\PP\left( \frac 1{y_{n, k, i}}
\sum_{j=1}^n\left(\overline{F}_{\varepsilon}\pr{\frac{u_nx_{i+1}(k)}{\sigma\pr{Y_j}}}-
\overline{F}_{\varepsilon}\pr{\frac{u_nx_{i}(k)}{\sigma\pr{Y_j}}}\right)>1  \right).
\end{align*}
Note that
\begin{align}\label{eq:dec_var_cond_chaining}
 \frac 1{y_{n, k, i}}
\sum_{j=1}^n\left(\overline{F}_{\varepsilon}\pr{\frac{u_nx_{i+1}(k)}{\sigma\pr{Y_j}}}-
\overline{F}_{\varepsilon}\pr{\frac{u_nx_{i}(k)}{\sigma\pr{Y_j}}}\right)
=A_{1}(n, k, i)+A_{2}(n, k, i)+A_{3}(n, k, i),
\end{align}
where
\begin{align*}
&A_{1}(n, k, i)\defeq\frac{\overline{F}_{\varepsilon}(u_n)}{y_{n, k, i}}
\sum_{j=1}^n\left\{\frac{\overline{F}_{\varepsilon}\pr{\frac{u_n x_{i+1}(k)}{\sigma\pr{Y_j}}}}{\overline{F}_{\varepsilon}(u_n)}-
\frac{\overline{F}_{\varepsilon}\pr{\frac{u_n x_{i}(k)}{\sigma\pr{Y_j}}}}{\overline{F}_{\varepsilon}(u_n)}-\sigma^{\alpha}(Y_j)\left(x_{i+1}(k)^{-\alpha}-x_{i}(k)^{-\alpha}\right)\right\},\\
&A_{2}(n, k, i)\defeq\frac{\overline{F}_{\varepsilon}(u_n)}{y_{n, k, i}}\sum_{j=1}^n\sigma^{\alpha}(Y_j)\left\{\left(x_{i+1}(k)^{-\alpha}-x_{i}(k)^{-\alpha}\right)-\left(\frac{\overline{F}_{\varepsilon}\left(u_nx_{i+1}(k)\right)}{\overline{F}_{\varepsilon}(u_n)}-\frac{\overline{F}_{\varepsilon}\left(u_nx_{i}(k)\right)}{\overline{F}_{\varepsilon}(u_n)}\right)\right\},\\
&A_{3}(n, k, i)\defeq\frac{\overline{F}_{\varepsilon}(u_n)}{y_{n, k, i}}\sum\limits_{j=1}^n\sigma^{\alpha}(Y_j)\left(\frac{\overline{F}_{\varepsilon}\left(u_nx_{i+1}(k)\right)}{\overline{F}_{\varepsilon}(u_n)}-\frac{\overline{F}_{\varepsilon}\left(u_nx_{i}(k)\right)}{\overline{F}_{\varepsilon}(u_n)}\right),
\end{align*}
so that
\begin{align}
&\PP\pr{\frac 1{y_{n, k, i}}
\sum_{j=1}^n\left[\overline{F}_{\varepsilon}\pr{\frac{u_n x_{i+1}(k)}{\sigma\pr{Y_j}}}-
\overline{F}_{\varepsilon}\pr{\frac{u_nx_{i}(k)}{\sigma\pr{Y_j}}}\right]>1 }\notag\\
&\leq
\PP\pr{\left|A_{1}(n, k,  i)\right|>\frac{1}{3} }
+\PP\pr{\left|A_{2}(n, k, i)\right|>\frac{1}{3} }
+\PP\pr{\left|A_{3}(n, k, i)\right|>\frac{1}{3} }.\label{eq:summands}
\end{align}
According to Lemma \ref{lem:control_ratio_difference_Fz} in the appendix it holds that
\begin{align*}
\left|A_{2}(n, k,  i)\right|
&\leq \frac{2^{k}\overline{F}_{\varepsilon}(u_n)}{n\overline{F}_{\varepsilon}(u_n)B\delta}\mathcal{O}(\eta^{\star}(u_n))(x_{i+1}(k)-x_{i}(k))
\sum_{j=1}^n\sigma^{\alpha}(Y_j).
\end{align*}

Therefore, given Assumption \ref{assump:eta_of_un}, it follows that
\begin{align*}
&\lim\limits_{n\rightarrow \infty}\sum\limits_{k=1}^{K_n}\sum\limits_{i=0}^{\lfloor 2^k \delta^{-1}\rfloor}\frac{1}{\delta}
\PP\pr{\left|A_{2}(n, k,  i)\right|>\frac{1}{3} }\\
&\leq \lim\limits_{n\rightarrow \infty}\mathcal{O}(\eta^{\star}(u_n))\frac{1}{\delta}\sum\limits_{k=1}^{K_n}\sum\limits_{i=0}^{\lfloor 2^k \delta^{-1}\rfloor}\frac{2^{k}}{B\delta}(x_{i+1}(k)-x_{i}(k))
\Es{\sigma^{\alpha}\pr{Y_1}}\\
&=\lim\limits_{n\rightarrow \infty}\mathcal{O}(\eta^{\star}(u_n))\mathcal{O}\left(\frac{1}{\delta}\sum\limits_{k=1}^{K_n}\frac{2^{k}}{B\delta}\right)
= \lim\limits_{n\rightarrow \infty}\mathcal{O}(\eta^{\star}(u_n))\mathcal{O}\left(\frac{1}{\delta}\frac{2^{K_n}}{B\delta}\right)=0.
\end{align*}

For the first summand in \eqref{eq:dec_var_cond_chaining}, it follows by  Lemma \ref{lem:control_ratio_difference_Fz} in the appendix  that
\begin{align*}
\left|A_{1}(n, k,  i)\right|
&\leq \frac{\overline{F}_{\varepsilon}(u_n)}{y_{n, k, i}}\sum\limits_{j=1}^nC\eta^{\star}(u_n)\left(\frac{x_{i+1}(k)}{\sigma(Y_j)}-\frac{x_{i}(k)}{\sigma(Y_j)}\right)\left(
\min\ens{\frac{x_{i+1}(k)}{\sigma(Y_j)},1}\right)^{-\alpha-\rho-\epsilon}\\
&\leq \frac{\overline{F}_{\varepsilon}(u_n)}{n\overline{F}_{\varepsilon}(u_n)B\frac{\delta}{2^k}}C\eta^{\star}(u_n)(x_{i+1}(k)-x_{i}(k))\sum\limits_{j=1}^n\sigma^{-1}(Y_j)\left(
\max\ens{\sigma\pr{Y_j}, 1}\right)^{\alpha+\rho+\epsilon}\\
&= \frac{2^k}{B\delta}C\eta^{\star}(u_n)(x_{i+1}(k)-x_{i}(k))\frac{1}{n}\sum\limits_{j=1}^n\sigma^{-1}\pr{Y_j}
\left(\max\ens{\sigma\pr{Y_j}, 1}\right)^{\alpha+\rho+\epsilon}.
\end{align*}
According to Assumptions \ref{assump:moments:isigmaY} and \ref{assump:moments:inverse_sigmaY_2+delta} the expectation of the summands on the right-hand side is finite and hence, for each $\delta>0$,
\begin{align*}
&\lim\limits_{n\rightarrow \infty}\sum\limits_{k=1}^{K_n}\sum\limits_{i=0}^{\lfloor 2^k \delta^{-1}\rfloor}\frac{1}{\delta}
\PP\pr{\left|A_{1}(n, k,  i)\right|>\frac{1}{3} }\\
&\leq \lim\limits_{n\rightarrow \infty}\frac{1}{\delta}\sum\limits_{k=1}^{K_n}\sum\limits_{i=0}^{\lfloor 2^k \delta^{-1}\rfloor}\frac{2^{k}}{nB\delta}\mathcal{O}(\eta^{\star}(u_n))(x_{i+1}(k)-x_{i}(k))
\sum_{j=1}^n\Es{\sigma^{-1}(Y_j)\left(\max\ens{\sigma\pr{Y_j}, 1}\right)^{\alpha+\rho+\epsilon}}\\
&= \lim\limits_{n\rightarrow \infty}\mathcal{O}(\eta^{\star}(u_n))\mathcal{O}\left(\frac{1}{\delta}\sum\limits_{k=1}^{K_n}\frac{2^{k}}{B\delta}\right)
=  \lim\limits_{n\rightarrow \infty}\mathcal{O}(\eta^{\star}(u_n))\mathcal{O}\left(\frac{1}{\delta}\frac{2^{K_n}}{B\delta}
\right)\\
&=  \lim\limits_{n\rightarrow \infty}\mathcal{O}(\eta^{\star}(u_n))\mathcal{O}\left(\frac{a_n}{B\delta}
\right)=0.
\end{align*}
Finally, we consider the last
summand in \eqref{eq:dec_var_cond_chaining}:
\begin{equation*}
A_{3}(n, k, i)=\frac{1}{nB}\sum\limits_{j=1}^n\sigma^{\alpha}\pr{Y_j}.
\end{equation*}
Obviously, $A_{3}(n, k, i)$ depends  neither on $k$ nor on $i$. 
Due to the non-central limit theorem in \cite{taqqu:1979}, it holds that
\begin{align*}
\frac{1}{nB}\sum\limits_{j=1}^n\pr{\sigma^{\alpha}(Y_j)-\Es{\sigma^{\alpha}\pr{Y_1}}}=\mathcal{O}_P\left(\frac{d_{n, r}}{n}\right).
\end{align*}
Choosing $B>6\Es{\sigma^{\alpha}(Y_1)}$, it follows that
\begin{align*}
\PP\pr{\frac{1}{nB}\sum\limits_{j=1}^n\sigma^{\alpha}\pr{Y_j}>\frac{1}{3} }\leq\PP\pr{\frac{1}{nB}\sum\limits_{j=1}^n\left(\sigma^{\alpha}\pr{Y_j}-\Es{\sigma^{\alpha}\pr{Y_1}}\right)>\frac{1}{6}}=\mathcal{O}\left(\frac{d_{n, r}}{n}\right).
\end{align*}
Hence, we have
\begin{align*}
&\lim\limits_{n\rightarrow \infty}\sum\limits_{k=1}^{K_n}\sum\limits_{i=0}^{\frac{2^k}{\delta}-1}\frac{1}{\delta}\PP\pr{\left|A_{3}(n, k,  i)\right|>\frac{1}{3} }\\
&=\lim\limits_{n\rightarrow \infty}\PP\pr{\frac{1}{nB}\sum\limits_{j=1}^n\sigma^{\alpha}\pr{Y_j}>\frac{1}{3} }\sum\limits_{k=1}^{K_n}\sum\limits_{i=0}^{\frac{2^k}{\delta}-1}\frac{1}{\delta}\\
&=\lim\limits_{n\rightarrow \infty}\mathcal{O}\left(\frac{d_{n, r}}{n}\right)\sum\limits_{k=1}^{K_n}\frac{2^k}{\delta^2}=\lim\limits_{n\rightarrow \infty}\mathcal{O}\left(\frac{d_{n, r}}{n}\frac{2^{K_n}}{\delta^2}\right)=0.
\end{align*}

All in all, the previous considerations show that the second summand in \eqref{eq:border_terms} converges to $0$. Since the other 
 terms in \eqref{eq:border_terms} can be treated analogously, this finishes the proof of \eqref{eq:tightness_Mn_part1}.

\subsubsection{Proof of \eqref{eq:tightness_Mn_part2}.}
Initially, note that
\begin{align*}
&\sup_{\substack{\abs{t_2-t_1}<\delta\\ 0\leq t_1,t_2\leq 1  }}\abs{
m_n\pr{s,t_2}-m_n\pr{s,t_1}
   }\\
   &=
\sup_{\substack{\abs{t_2-t_1}<\delta\\ 0\leq t_1,t_2\leq 1  }}\abs{    \frac{1}{\sqrt{n\overline{F}_{\varepsilon}\pr{u_n}}}
\sum\limits_{j=\lfloor nt_1\rfloor+1}^{\lfloor nt_2\rfloor}\left(\1\left\{X_j>u_ns\right\} -
\Es{\1\left\{X_j>u_n s\right\} \left|\right. \mathcal{F}_{j-1}}\right)}.
\end{align*}

As before, we apply a chaining technique in order to prove \eqref{eq:tightness_Mn_part2}.
For this, we define the intervals 
\begin{align*}
I_{1,k}\defeq[2k\delta, 2(k+1)\delta] \ \text{ and } \ I_{2, k}\defeq[(2k+1)\delta, (2(k+1)+1)\delta].
\end{align*}
for $k=0, \ldots, L_{\delta}\defeq\lfloor\frac{1}{2\delta}\rfloor$.

Then, similarly to \eqref{eq:decomposition-1}, it holds that
\begin{align*}
&\sup_{ 1\leq s \leq R  } \sup_{\substack{\abs{t_2-t_1}<\delta\\ 0\leq t_1,t_2\leq 1  }}\abs{
m_n\pr{s,t_2}-m_n\pr{s,t_1}
   }\\
&   \leq \sup_{ 1\leq s \leq R  } \max\limits_{
0\leq k\leq L_{\delta}}\sup_{\substack{t_1, t_2\in I_{1, k}  } }
\abs{
m_n\pr{s,t_2}-m_n\pr{s,t_1}
   }+\sup_{ 1\leq s \leq R  } \max\limits_{
   0\leq k\leq L_{\delta}}\sup_{\substack{t_1, t_2\in I_{2, k}\\  } }
\abs{
m_n\pr{s,t_2}-m_n\pr{s,t_1}
   }.
\end{align*}
Again,
we restrict our considerations to the first summand
 and we note that it suffices to show that
\begin{align*}
\lim_{\delta\to 0}\limsup_{n\to \infty}\frac{1}{\delta}
\PP\pr{\sup_{ 1\leq s \leq R  } \sup_{\substack{t_2, t_1\in I_{1, k}}}\abs{
m_n\pr{s,t_2}-m_n\pr{s,t_1}
   }>\epsilon     }=0.
\end{align*}
Since, due to stationarity of the data-generating process,
\begin{align*}
\sup_{ 1\leq s \leq R  } \sup_{\substack{ t_2, t_1\in I_{1, k}}}\abs{
m_n\pr{s,t_2}-m_n\pr{s,t_1}
   }\overset{\mathcal{D}}{=}\sup_{ 1\leq s \leq R  } \sup_{\substack{t_2, t_1\in I_{1, 0}}}\abs{
m_n\pr{s,t_2}-m_n\pr{s,t_1}
   },
\end{align*}
verification of \eqref{eq:tightness_Mn_part2} follows by the same argument as  verification \eqref{eq:tightness_Mn_part1}.
\end{proof}

\subsection{Proof of Corollaries~\ref{cor:convergence_gamma_hat}, \ref{cor:convergence_gamma_Hill}, \ref{cor:convergence_test_statistic}, and \ref{cor:convergence_test_statistic_Hill}}

\begin{proof}[Proof of Corollary~\ref{cor:convergence_gamma_hat}]
An argument  from \cite{kulik:soulier:2011} is repeated and hence many technicalities are omitted. Note that the arguments below are model-free and only use  the conclusion of Theorem \ref{thm:TEP}. 

It holds that 
\begin{align*}
\widehat{\gamma}_{\lfloor nt\rfloor}=\frac{A_n(t)}{B_n(t)},
\end{align*}
where
\begin{align*}
&A_n(t)\defeq\frac{1}{n\overline{F}(u_n)}\sum_{j=1}^{\pe{nt}}\log\left(\frac{X_j}{u_n}\right)\1\ens{X_j>u_n} \ \text{and} \ B_n(t)\defeq\frac{1}{n\overline{F}(u_n)}\sum_{j=1}^{\lfloor nt\rfloor}\1\{X_j>u_n\} .
\end{align*}
By substracting and adding $t\alpha^{-1}/B_n\pr{t}$, 
the following equality holds:
\begin{align*} 
ta_n\left(\widehat{\gamma}_{\lfloor nt\rfloor}-\alpha^{-1}\right)
= \frac{a_nt}{B_n\pr{t}}\pr{A_n\pr{t}-t\alpha^{-1}}
+\frac{a_nt}{\alpha}\pr{\frac{t}{B_n\pr{t}}-1},
\end{align*}
 where $a_n=n/d_{n, r}$ 
if  $n/d_{n, r}=o\left(\sqrt{n\overline{F}(u_n)}\right)$
and  $a_n=\sqrt{n\overline{F}(u_n)}$ 
if $\sqrt{n\overline{F}(u_n)}=o\left(n/d_{n, r}\right)$.

We note that,  compared to \cite{kulik:soulier:2011}, the last term appears additionally due to the fact that we consider the two-parameter processes.

Rewriting $A_n\pr{t}-t\alpha^{-1}$ as an integral and 
replacing $\widetilde{T_n}-T$ by $e_n$, 
we have
 \begin{align}\label{eq:expression_for_gamma_n_integral}
t\left(\widehat{\gamma}_{\lfloor nt\rfloor}-\alpha^{-1}\right)
= \frac{a_nt}{B_n\pr{t}}
\int_{1}^\infty s^{-1}e_n\pr{s,t} ds
-\frac{a_nt}{\alpha B_n\pr{t}} e_n\pr{1,t}.
\end{align}

We 
will show  weak convergence 
of the sequence
\begin{equation}
 Y_n^{\pr{R}}\pr{t}:= \frac{a_nt}{B_n\pr{t}}
\int_{1}^R s^{-1}e_n \pr{s,t} ds
-\frac{a_nt}{\alpha B_n\pr{t}} e_n\pr{1,t}, \ n\geq 1, 
\end{equation}
 in $D[t_0,1]$ by an application of the continuous mapping theorem. For this,  
 we have to initially show that  terms of the form 
$t/B_n\pr{t}$ are negligible. Noting that $B_n(t)=\widetilde{T}_{n}(1,t)$, it follows 
by Theorem~\ref{thm:TEP} that 
\begin{equation}\label{eq:convergence_in_prob_Bn_sur_t}
\sup_{t_0\leq t\leq 1}
\abs{t\frac{B_n(1)}{B_n(t)}-1}\overset{P}{\longrightarrow}0.
\end{equation}
Indeed, Theorem~\ref{thm:TEP} implies that $\sup_{t_0\leq t\leq 1}
\abs{B_n\pr{t}-t}\to 0$ in probability and 
\begin{multline*}
 \sup_{t_0\leq t\leq 1}
\abs{t\frac{B_n(1)}{B_n(t)}-1}
\leq \frac 1{B_n\pr{t_0}}
 \sup_{t_0\leq t\leq 1}
\abs{tB_n(1)-B_n\pr{t}}\\
\leq \frac 1{B_n\pr{t_0}}
 \sup_{t_0\leq t\leq 1}
\abs{t\pr{B_n(1)-1}+t-B_n\pr{t}}\leq \frac 2{B_n\pr{t_0}}
\sup_{t_0\leq t\leq 1}
\abs{B_n\pr{t}-t}\to 0.
\end{multline*}
As a consequence, rewriting $Y_n^{\pr{R}}$ as
\begin{multline}
  Y_n^{\pr{R}}\pr{t}  
  = a_n
\int_{1}^R s^{-1}e_n \pr{s,t} ds
-\alpha^{-1}a_n e_n\pr{1,t}\\+  \pr{\frac{t}{B_n\pr{t}}-1}
\int_{1}^Ra_n s^{-1}e_n \pr{s,t} ds
-\pr{\frac{t}{ B_n\pr{t}}-1 } \alpha^{-1}a_ne_n\pr{1,t}
\end{multline}
and combining Theorem~\ref{thm:TEP} with \eqref{eq:convergence_in_prob_Bn_sur_t} shows that the limit of the sequence $Y_n^{\pr{R}}, n\geq 1$ corresponds to the limit of
\begin{equation}
 Z_n^{\pr{R}}\defeq a_n
\int_{1}^R s^{-1}e_n \pr{s,t} ds
-\alpha^{-1}a_n e_n\pr{1,t}, \ n\geq 1.
\end{equation}
By Theorem~\ref{thm:TEP} and the continuous mapping theorem, we conclude that $Z_n^{\pr{R}}$, $n\geq 1$, 
converges in distribution to $\int_{1}^R\xi \pr{s,t} ds-\alpha^{-1}\xi\pr{1,t}$,
where $\xi(s,t)$ it the limiting process in  \eqref{eq:limit-1} or \eqref{eq:limit-2}, respectively.
Using the same arguments as in \cite{kulik:soulier:2011}, it can be shown  that the convergence of $Y_n^{\pr{R}}$, $n\geq 1$,  can be easily extended to convergence 
of
\begin{equation}
\frac{a_nt}{B_n\pr{t}}
\int_{1}^{\infty} s^{-1}e_n \pr{s,t} ds
-\frac{a_nt}{\alpha B_n\pr{t}} e_n\pr{1,t}, \ n\geq 1.
\end{equation}
Hence, we conclude that 
\begin{equation*}
 ta_n\left(\widehat{\gamma}_{\lfloor nt\rfloor}-\alpha^{-1}\right)
 \Rightarrow \int_{1}^\infty \xi \pr{s,t} ds-\alpha^{-1}\xi\pr{1,t}
\end{equation*}
in $D[t_0, 1]$.

If $\frac{n}{d_{n, r}}=o\left(\sqrt{n\overline{F}(u_n)}\right)$, separation of the variables 
$s$ and $t$ shows that the limit vanishes. This  finishes the 
proof of Corollary~\ref{cor:convergence_gamma_hat}.
\end{proof}

\begin{proof}[Proof of Corollary~\ref{cor:convergence_gamma_Hill}]
Define
\begin{align*}
\widehat{T}_n(s, t)\defeq \frac{1}{\lfloor k_nt\rfloor}\sum\limits_{j=1}^{\lfloor nt\rfloor}\1\ens{X_j>sX_{\lfloor nt\rfloor:\lfloor nt\rfloor-\lfloor k_nt\rfloor}}.
\end{align*}
Then, it holds that
\begin{align*}
\int_1^{\infty}s^{-1}\widehat{T}_n(s, t)ds =\frac{1}{\lfloor k_nt\rfloor}\sum\limits_{j=1}^{\lfloor nt\rfloor}\int_1^{\infty}s^{-1}\1\ens{X_j>sX_{\lfloor nt\rfloor:\lfloor nt\rfloor-\lfloor k_nt\rfloor}}ds
=\widehat{\gamma}_{\rm Hill}(t).
\end{align*}
According to Skorokhod's representation theorem (Theorem~2.3.4 in \cite{shorack:wellner:1986}) and Theorem \ref{thm:TEP}, we may assume without loss of generality that
\begin{align*}
\sup\limits_{s\in [1, \infty], t\in [0, 1]}\left|a_n\left(\frac{1}{n\overline{F}(u_n)}\sum\limits_{j=1}^{\lfloor nt\rfloor}\1\ens{X_j> u_ns}-s^{-\alpha}t\right)-\xi(s, t)\right|\longrightarrow 0 \ \ \text{almost surely,}
\end{align*}
where $a_n=n/d_{n, r}$ 
if  $n/d_{n, r}=o\left(\sqrt{n\overline{F}(u_n)}\right)$,
 $a_n=\sqrt{n\overline{F}(u_n)}$ 
if $\sqrt{n\overline{F}(u_n)}=o\left(n/d_{n, r}\right)$,
 and $\xi$ denotes the corresponding limiting process in Theorem \ref{thm:TEP}.
In order to apply Vervaat's Lemma as stated by Lemma 5 in \cite{einmahl:2010}, we rephrase the above convergence as:
\begin{align}\label{eq:Skorohod}
\sup\limits_{s\in [0, 1], t\in [t_0, 1]}\left|a_n\left(\Gamma_{n, t}(s)-s\right)-\frac{1}{t}\xi(s^{-\frac{1}{\alpha}}, t)\right|\longrightarrow 0 \ \ \text{almost surely}
\end{align}
for any $t_0>0$ and with
\begin{align*}
\Gamma_{n, t}(s)\defeq \frac{1}{\overline{F}(u_n)}\frac{1}{\lfloor nt\rfloor}\sum\limits_{j=1}^{\lfloor nt\rfloor}\1\ens{
 X_j>u_ns^{-\frac{1}{\alpha}} }.
\end{align*}
Choosing $k_n=n\overline{F}(u_n)$, it follows that
\begin{align*}
\Gamma_{n, t}^{-}(s)=\left(u_n^{-1}X_{\lfloor nt\rfloor:\lfloor nt\rfloor-s\lfloor k_nt\rfloor }\right)^{-\alpha}.
\end{align*}
As a result, Vervaat's Lemma yields
\begin{align}\label{eq:vervaat}
&\sup\limits_{s\in [0, 1], t\in [t_0, 1]}\left|a_n\left(\left(u_n^{-1}X_{\lfloor nt\rfloor:\lfloor nt\rfloor-s\lfloor k_nt\rfloor}\right)^{-\alpha}-s\right)+\frac{1}{t}\xi(s^{-\frac{1}{\alpha}}, t)\right|
\longrightarrow 0 \  \ \text{almost surely}.
\end{align}
Setting $s_n=su_n^{-1}X_{\lfloor nt\rfloor:\lfloor nt\rfloor-\lfloor k_nt\rfloor}$, we arrive at
\begin{align*}
&\sup\limits_{s\in [1, \infty], t\in [t_0, 1]}\left|a_n\left(\frac{1}{n\overline{F}(u_n)}\frac{1}{t}\sum\limits_{j=1}^{\lfloor nt\rfloor}\1\ens{ X_j> sX_{\lfloor nt\rfloor:\lfloor nt\rfloor-\lfloor k_nt\rfloor} }-s^{-\alpha}\right)-\frac{1}{t}\left(\xi(s, t)-s^{-\alpha}\xi(1, t)\right)\right|\\
\leq
&\sup\limits_{s\in [1, \infty], t\in [t_0, 1]}\left|a_n\left(\frac{1}{n\overline{F}(u_n)}\frac{1}{t}\sum\limits_{j=1}^{\lfloor nt\rfloor}\1\ens{ X_j> u_ns }-s^{-\alpha}\right)-\frac{1}{t}\xi(s, t)\right|\\
&+\sup\limits_{s\in [1, \infty], t\in [t_0, 1]}\left|a_n\left(s_n^{-\alpha}-s^{-\alpha}\right)+s^{-\alpha}\frac{1}{t}\xi(1, t)\right|+\sup\limits_{s\in [1, \infty], t\in [t_0, 1]}\left|\frac{1}{t}\left(\xi(s_n, t)-\xi(s, t)\right)\right|.
\end{align*}
The first two summands on the right-hand side converge to $0$ almost surely according to
 \eqref{eq:Skorohod} and \eqref{eq:vervaat}.
Moreover, we have $s_n^{-\alpha}=s^{-\alpha}\left(1+o(1)\right)$ a.s. uniformly in $t$ and $s$ such that it follows by a continuity argument that the third summand converges to $0$ almost surely, as well.

Since
$k_n=n\overline{F}(u_n)$, we have
\begin{align*}
a_n\left(\widehat{T}_n(s, t)-T(s, 1)\right)
&=a_n\left(\frac{1}{\lfloor k_nt\rfloor}\sum\limits_{j=1}^{\lfloor nt\rfloor}\1\ens{ X_j>sX_{\lfloor nt\rfloor:\lfloor nt\rfloor-\lfloor k_nt\rfloor} }-s^{-\alpha}\right)\\
&=a_n\left(\frac{1}{n\overline{F}(u_n)}\frac{1}{t}\sum\limits_{j=1}^{\lfloor nt\rfloor}\1\ens{ X_j> sX_{\lfloor nt\rfloor:\lfloor nt\rfloor-\lfloor k_nt\rfloor} }-s^{-\alpha}\right)+o_P(1)\\
&\Rightarrow \frac{1}{t}\left(\xi(s, t)-s^{-\alpha}\xi(1, t)\right).
\end{align*}
Similar to the proof of Corollary \ref{cor:convergence_gamma_hat},  it follows that
\begin{align*}
a_nt\left(\widehat{\gamma}_{\rm Hill}(t)-\gamma\right)
=t\int_1^{\infty}s^{-1}a_n\left(\widehat{T}_n(s,t)-T(s, 1)\right)ds
\Rightarrow \int_{1}^\infty \xi \pr{s,t} ds-\alpha^{-1}\xi\pr{1,t}
\end{align*}
in $D[t_0, 1]$.
\end{proof}

\begin{proof}[Proof of Corollaries~\ref{cor:convergence_test_statistic} and \ref{cor:convergence_test_statistic_Hill}]
Since, due to Corollaries~\ref{cor:convergence_gamma_hat} and \ref{cor:convergence_gamma_Hill}, $\widehat{\gamma}_{n}$ and $\widehat{\gamma}_{\text{Hill}}(1)$ converge to $\gamma$ in probability,
 Corollaries~\ref{cor:convergence_test_statistic} and \ref{cor:convergence_test_statistic_Hill} follow from Slutsky's theorem and an application of the continuous mapping theorem to
the processes
\begin{align*}
a_nt\left(\widehat{\gamma}_{\pe{nt}}-\gamma\right)&\Rightarrow  \int_1^{\infty}s^{-1}\xi(s,t)ds-\alpha^{-1}\xi(1, t)
, \ t\in [t_0, 1],\\
a_nt\left(\widehat{\gamma}_{\text{Hill}}(t)-\gamma\right)&\Rightarrow  \int_1^{\infty}s^{-1}\xi(s,t)ds-\alpha^{-1}\xi(1, t)
, \ t\in [t_0, 1],
\end{align*}
and the function
\begin{align*}
f\mapsto \sup\limits_{t\in [t_0, 1]}\left|f(t)-tf(1)\right|.
\end{align*}
\end{proof}

\section{Appendix: Second-order regular variation}
The following lemmata
originate in \cite{kulik:soulier:2011} and are essential to the proof of Theorem \ref{thm:TEP}.
\begin{lemma}[Lemma~4.1 in \cite{kulik:soulier:2011}] \label{lem:control_ratio_Fz}
Assume that
$\overline{F}_{\varepsilon} \in \text{2RV}\pr{\alpha, \eta^*}$.
For positive $\varepsilon$ there exists a constant $C$ such that
\begin{equation}\label{eq:control_ratio_Fz}
\forall t\geq 1, \forall z>0, \quad
\abs{\frac{\overline{F}_{\varepsilon}\pr{zt}}{\overline{F}_{\varepsilon}\pr{t}} -z^{-\alpha}  }\leq
C\eta^*\pr{t}z^{-\alpha-\rho}\pr{\max\ens{z,z^{-1}}}^\varepsilon.
\end{equation}
\end{lemma}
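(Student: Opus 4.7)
The starting point is the Karamata representation that comes built into the $2\text{RV}$ assumption: $\overline{F}_\varepsilon(x) = x^{-\alpha}L(x)$ with $L(x)=c\exp\pr{\int_1^x \eta(u)/u\,du}$. This gives the exact identity
\begin{equation*}
\frac{\overline{F}_\varepsilon(zt)}{\overline{F}_\varepsilon(t)} \;=\; z^{-\alpha}\,\frac{L(zt)}{L(t)} \;=\; z^{-\alpha}\exp\pr{I(t,z)},\qquad I(t,z)\defeq\int_t^{zt}\frac{\eta(u)}{u}\,du,
\end{equation*}
with the integral understood as $-\int_{zt}^{t}$ when $z<1$. Consequently
\begin{equation*}
\abs{\frac{\overline{F}_\varepsilon(zt)}{\overline{F}_\varepsilon(t)}-z^{-\alpha}} \;=\; z^{-\alpha}\,\abs{e^{I(t,z)}-1},
\end{equation*}
so after factoring out $z^{-\alpha}$ it suffices to establish the one-sided estimate $\abs{e^{I(t,z)}-1}\leq C\eta^*(t)z^{-\rho}\pr{\max\ens{z,z^{-1}}}^\varepsilon$.

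The plan is then to bound $|I(t,z)|$ by $CJ(t,z)$, where $J(t,z)\defeq\abs{\int_{t\wedge zt}^{t\vee zt}\eta^*(u)/u\,du}$, using the hypothesis $|\eta(u)|\leq C\eta^*(u)$, and to use the elementary inequality $\abs{e^{x}-1}\leq |x|e^{|x|}$. After the change of variable $u=tv$,
\begin{equation*}
J(t,z) \;=\; \abs{\int_{1\wedge z}^{1\vee z}\frac{\eta^*(tv)}{v}\,dv}.
\end{equation*}
The Potter bound for the regularly varying function $\eta^*$ of index $-\rho$ provides, for every $\varepsilon>0$, a constant $C_\varepsilon$ with $\eta^*(tv)/\eta^*(t)\leq C_\varepsilon \max\ens{v^{-\rho+\varepsilon},v^{-\rho-\varepsilon}}$, uniformly for $t\geq 1$ and $tv\geq 1$; the remaining range $tv<1$ is absorbed using the boundedness of $\eta^*$ on $[0,\infty)$. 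Integrating the dominating power on $[1,z]$ (when $z\geq 1$, the $v^{-\rho+\varepsilon}$ branch) and on $[z,1]$ (when $z\leq 1$, the $v^{-\rho-\varepsilon}$ branch) separately yields, in both cases,
\begin{equation*}
J(t,z) \;\leq\; C\,\eta^*(t)\,z^{-\rho}\pr{\max\ens{z,z^{-1}}}^{\varepsilon}.
\end{equation*}

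To conclude, combine $\abs{e^{I}-1}\leq |I|e^{|I|}$ with the estimate on $J(t,z)$ in the regime where $|I|$ is $O(1)$. For the complementary regime (in which $|I|$ is large), observe that $L(zt)/L(t)$ is anyway uniformly bounded---this is immediate when $\rho=0$ from the definition of slow variation after absorbing constants, and for $\rho>0$ it follows because $L$ converges at infinity to a positive limit. Since $\eta^*$ is bounded, the large-$|I|$ regime forces $(\max\ens{z,z^{-1}})^\varepsilon$ to be large, so the claimed right-hand side already dominates an absolute constant there. Multiplying by $z^{-\alpha}$ gives the bound of the lemma.

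\emph{Main obstacle.} The most delicate point is making the Potter estimate uniform for all $t\geq 1$ (rather than merely for $t$ large, as in the textbook version of Potter): this forces one to split off the range $tv<1$ and exploit boundedness of $\eta^*$ by a constant on compacts. A secondary subtlety is that the inequality $|e^I-1|\leq|I|e^{|I|}$ only gives a useful estimate while $|I|$ stays bounded; the complementary case must be handled by pairing the uniform boundedness of $L(zt)/L(t)$ against the growth of $(\max\ens{z,z^{-1}})^\varepsilon$, which is why the statement requires the extra factor $(\max\ens{z,z^{-1}})^\varepsilon$ rather than an exact polynomial decay.
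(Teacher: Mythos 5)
A preliminary remark: the paper offers no proof of this statement at all --- it is imported verbatim as Lemma~4.1 of \cite{kulik:soulier:2011} --- so your attempt cannot be measured against an in-paper argument and has to be judged on its own merits. Your overall route (Karamata representation, factoring out $z^{-\alpha}$, reducing to an estimate of $\abs{\e^{I(t,z)}-1}$ via $\abs{\eta}\leq C\eta^*$ and Potter bounds for $\eta^*$) is the standard and correct one for bounds of this type. There is, however, a genuine gap at the central integration step, and it cannot be closed because the inequality exactly as printed fails in the corresponding regime. For $z\geq 1$ you assert $\int_1^z v^{-\rho+\varepsilon-1}\,dv\leq C z^{-\rho+\varepsilon}$; when $0<\varepsilon<\rho$ this is false, since the integral increases to the positive constant $(\rho-\varepsilon)^{-1}$ while $z^{-\rho+\varepsilon}\to 0$, so your bound on $J(t,z)$, hence on $\abs{\e^{I}-1}$, does not follow for large $z$. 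No alternative argument can supply it: take $\eta=\eta^*$ with $\eta^*(u)=\min\ens{1,u^{-\rho}}$ and $\rho>0$; then for $t,z\geq 1$ one computes $I(t,z)=\rho^{-1}\eta^*(t)\pr{1-z^{-\rho}}>0$, so the left-hand side of \eqref{eq:control_ratio_Fz} equals $z^{-\alpha}\pr{\e^{I(t,z)}-1}\geq \rho^{-1}\eta^*(t)z^{-\alpha}\pr{1-z^{-\rho}}$, whereas the right-hand side is $C\eta^*(t)z^{-\alpha-\rho+\varepsilon}=o\pr{\eta^*(t)z^{-\alpha}}$ as $z\to\infty$ when $\varepsilon<\rho$. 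What your computation actually delivers for $z\geq 1$ is $J(t,z)\leq C\eta^*(t)\max\ens{1,z^{\varepsilon-\rho}}$, i.e.\ the lemma with $z^{-\alpha-\rho}$ replaced by $\max\ens{z^{-\alpha},z^{-\alpha-\rho}}$; that corrected version is the one that is provable (and is all the applications of the lemma elsewhere in the paper require). For $\rho=0$, or whenever one is content to take $\varepsilon\geq\rho$, your argument is essentially sound on this point.

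Two further soft spots occur in your closing step. The claim that $L(zt)/L(t)$ is uniformly bounded over all $t\geq 1$, $z>0$ is not justified: for $\rho=0$ the uniform convergence theorem for slowly varying functions gives uniformity only for $z$ in compact subsets of $(0,\infty)$ (consider $L(x)=\log(\e+x)$, for which $L(zt)/L(t)$ is unbounded in $z$ for fixed $t$), and for $z\to 0$ the ratio can likewise diverge, since $\int_{zt}^{t}\eta^*(u)u^{-1}\,du$ may grow like $\eta^*(0)\log(1/z)$ and the exponential of this need not be dominated by $z^{-\rho-\varepsilon}$. Consequently the passage from ``$\abs{I}$ large'' to ``the right-hand side already dominates a constant'' requires a quantitative comparison of $\e^{\abs{I}}$ against $z^{-\rho}\pr{\max\ens{z,z^{-1}}}^{\varepsilon}$, not the qualitative boundedness claim you make.
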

This implies that
\begin{equation}\label{eq:2ndrv}
\sup_{s\geq s_0}\left|\frac{\overline{F}_{\varepsilon}(u_ns)}{\overline{F}_{\varepsilon}(u_n)}-s^{-\alpha}\right| =O(\eta^*{(u_n)}).
\end{equation}

Using boundedness of $\eta^*$, we get the following simplified version of inequality
\eqref{eq:control_ratio_Fz}:
\begin{equation}\label{eq:control_ratio_Fz_simplified}
\forall t\geq 1, \forall z>0, \quad
 \frac{\overline{F}_{\varepsilon}\pr{zt}}{\overline{F}_{\varepsilon}\pr{t}}  \leq z^{-\alpha} +
C_\varepsilon z^{-\alpha-\rho}\pr{\max\ens{z,z^{-1}}}^\varepsilon.
\end{equation}

We also need the following bound on the increments of $\overline{F}_{\varepsilon}$.

\begin{lemma}[Lemma~4.2 in \cite{kulik:soulier:2011}] \label{lem:control_ratio_difference_Fz}
Assume that
$\overline{F}_{\varepsilon} \in \text{2RV}\pr{\alpha, \eta^*}$.
For positive $\epsilon$ there exists a constant $C$ such that
for all $t\geq 1$ and $b>a>0$
\begin{equation}\label{eq:control_ratio_difference_Fz}
\abs{
\frac{\overline{F}_{\varepsilon}\pr{at}-\overline{F}_{\varepsilon}\pr{bt}   }{\overline{F}_{\varepsilon}\pr{t}}
-\pr{a^{-\alpha}-b^{-\alpha}}
}\leq
C\eta^*\pr{t}\pr{\min\ens{a, 1}}^{-\alpha-\rho-\epsilon}\pr{b-a}.
\end{equation}
\end{lemma}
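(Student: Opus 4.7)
The plan is to exploit the differentiability of $\overline{F}_\varepsilon$ (which follows from the exponential representation of $L$) to rewrite the left-hand side as an integral over $[a,b]$, and then to apply Lemma~\ref{lem:control_ratio_Fz} pointwise to the integrand before integrating. Since $L(x)=c\exp\pr{\int_1^x \eta(u)/u\,du}$, one has $L'(x)/L(x)=\eta(x)/x$ and hence $\overline{F}_\varepsilon'(x)=(\eta(x)-\alpha)\overline{F}_\varepsilon(x)/x$, so that $\overline{F}_\varepsilon$ is absolutely continuous. Combining $\overline{F}_\varepsilon(at)-\overline{F}_\varepsilon(bt)=-\int_a^b t\overline{F}_\varepsilon'(ut)du$ with $a^{-\alpha}-b^{-\alpha}=\int_a^b \alpha u^{-\alpha-1}du$, this yields
\begin{equation*}
\frac{\overline{F}_\varepsilon(at)-\overline{F}_\varepsilon(bt)}{\overline{F}_\varepsilon(t)}-\pr{a^{-\alpha}-b^{-\alpha}} = \int_a^b \frac{1}{u}\left[(\alpha-\eta(ut))\frac{\overline{F}_\varepsilon(ut)}{\overline{F}_\varepsilon(t)}-\alpha u^{-\alpha}\right]du.
\end{equation*}
I would then split the bracket into a ``correction piece'' $\alpha u^{-1}\pr{\overline{F}_\varepsilon(ut)/\overline{F}_\varepsilon(t)-u^{-\alpha}}$ and an ``$\eta$-piece'' $-\eta(ut)u^{-1}\overline{F}_\varepsilon(ut)/\overline{F}_\varepsilon(t)$.

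For the correction piece, Lemma~\ref{lem:control_ratio_Fz} immediately delivers, for any small $\epsilon'>0$,
\begin{equation*}
\abs{\frac{\alpha}{u}\pr{\frac{\overline{F}_\varepsilon(ut)}{\overline{F}_\varepsilon(t)}-u^{-\alpha}}}\leq C\eta^*(t)u^{-\alpha-\rho-1}\pr{\max\ens{u,u^{-1}}}^{\epsilon'}.
\end{equation*}
For the $\eta$-piece, I would combine $\abs{\eta(ut)}\leq C\eta^*(ut)$ with a Potter bound for $\eta^*$ (regularly varying with index $-\rho$), namely $\eta^*(ut)\leq C\eta^*(t)u^{-\rho}\pr{\max\ens{u,u^{-1}}}^{\epsilon'}$, and with the Potter bound $\overline{F}_\varepsilon(ut)/\overline{F}_\varepsilon(t)\leq Cu^{-\alpha}\pr{\max\ens{u,u^{-1}}}^{\epsilon'}$ implied by \eqref{eq:control_ratio_Fz_simplified}. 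This gives exactly the same dominating expression, so the full integrand is bounded by $C\eta^*(t)u^{-\alpha-\rho-1}\pr{\max\ens{u,u^{-1}}}^{\epsilon'}$ uniformly on $[a,b]$.

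It then remains to integrate this pointwise bound over $[a,b]$, splitting the range at $u=1$ if $a\leq 1\leq b$. On the portion $u\leq 1$, the integrand is monotone and dominated by its value at $u=a$, so that its integral is controlled by $C\eta^*(t)(\min\ens{a,1})^{-\alpha-\rho-\epsilon}(b-a)$ after absorbing the $\epsilon'$-loss into $\epsilon$; on the portion $u\geq 1$ (only present when $b>1$), the expression $u^{-\alpha-\rho-1+\epsilon'}$ is at most $1$ for $\epsilon'$ small, contributing at most $(b-a)$, which is then absorbed by the claimed bound since $\min\ens{a,1}=1$ in that regime.

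The main obstacle I expect is the bookkeeping around the various Potter-type losses: one must choose $\epsilon'$ small enough so that, after combining exponents coming from $\eta^*$ and $\overline{F}_\varepsilon$, a single parameter $\epsilon$ governs the final estimate. Verifying that the $\eta$-piece is controlled by the same expression as the correction piece also requires being careful that the products of Potter-type inequalities still yield a $\pr{\max\ens{u,u^{-1}}}^{\epsilon'}$ factor with exponent arbitrarily small. Once this uniform pointwise bound is in hand, the integration is routine, and the key analytic inputs are only Lemma~\ref{lem:control_ratio_Fz} and Potter's theorem applied to the regularly varying function $\eta^*$.
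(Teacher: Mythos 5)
The paper does not actually prove this lemma: it is imported verbatim as Lemma~4.2 of \cite{kulik:soulier:2011}, so there is no in-paper argument to compare yours against. Your strategy --- differentiate $\overline{F}_{\varepsilon}$ via the Karamata representation, rewrite the left-hand side as $\int_a^b u^{-1}\bigl[(\alpha-\eta(ut))\overline{F}_{\varepsilon}(ut)/\overline{F}_{\varepsilon}(t)-\alpha u^{-\alpha}\bigr]du$, bound the integrand pointwise via Lemma~\ref{lem:control_ratio_Fz}, and integrate --- is the natural route, and the integral identity and the bound on the correction piece are correct.

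There is, however, a genuine gap at the final integration step. Your uniform pointwise bound on the integrand is $C\eta^*(t)u^{-\alpha-\rho-1}\pr{\max\ens{u,u^{-1}}}^{\epsilon'}$; on $[a,b]\cap(0,1]$ its supremum sits at $u=a$, so the integral is controlled by $C\eta^*(t)\pr{\min\ens{a,1}}^{-\alpha-\rho-1-\epsilon'}(b-a)$. The exponent is $-\alpha-\rho-1-\epsilon'$, not $-\alpha-\rho-\epsilon$: the extra $-1$ comes from the Jacobian $u^{-1}$ in the integral representation and cannot be ``absorbed into $\epsilon$'' unless $\epsilon>1$. This is not an artifact of your method. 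Taking $\eta(u)=u^{-\rho}$, so that $\overline{F}_{\varepsilon}(x)=c'x^{-\alpha}\pr{1-\rho^{-1}x^{-\rho}+O(x^{-2\rho})}$ and $\eta^*(t)=t^{-\rho}$, and choosing $b=2a$ with $a\to 0$, $at\to\infty$, the left-hand side of \eqref{eq:control_ratio_difference_Fz} is of exact order $\eta^*(t)a^{-\alpha-\rho}=\eta^*(t)a^{-\alpha-\rho-1}(b-a)$, which exceeds $C\eta^*(t)a^{-\alpha-\rho-\epsilon}(b-a)$ by a factor $a^{\epsilon-1}\to\infty$ whenever $\epsilon<1$. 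So what your argument, correctly completed, proves is the inequality with exponent $-\alpha-\rho-1-\epsilon$ --- equivalently, the displayed statement for $\epsilon\geq 1$, which happens to be the only value the paper ever invokes (it applies the lemma with $\epsilon=1$ in the bounds for $R_{n,1,i}$ and $A_1(n,k,i)$). A secondary, fixable issue concerns the $\eta$-piece: multiplying $\eta^*(ut)\leq C\eta^*(t)u^{-\rho}\pr{\max\ens{u,u^{-1}}}^{\epsilon'}$ by the simplified bound \eqref{eq:control_ratio_Fz_simplified} produces a term of order $u^{-\alpha-2\rho-1}\pr{\max\ens{u,u^{-1}}}^{2\epsilon'}$, which for $u<1$ and $\rho>0$ is not dominated by your claimed common majorant. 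You should instead use the plain Potter bound $\overline{F}_{\varepsilon}(ut)/\overline{F}_{\varepsilon}(t)\leq Cu^{-\alpha}\pr{\max\ens{u,u^{-1}}}^{\epsilon'}$, valid for all $t\geq 1$ and $u>0$ once the range where $ut$ falls below a fixed threshold is handled separately; with that substitution the $\eta$-piece is indeed controlled by the same expression as the correction piece.
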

Using again boundedness of $\eta^*$, we get the following simplified version of inequality
\eqref{eq:control_ratio_difference_Fz}:
\begin{equation}\label{eq:control_ratio_difference_Fz_simplified}
\frac{\overline{F}_{\varepsilon}\pr{at}-\overline{F}_{\varepsilon}\pr{bt}   }{\overline{F}_{\varepsilon}\pr{t}}
\leq a^{-\alpha}-b^{-\alpha} +
C_\epsilon\pr{\min\ens{a,1}}^{-\alpha-\rho-\epsilon}\pr{b-a}.
\end{equation}

\bibliographystyle{apalike}
\bibliography{change_point_tests_for_the_tail_parameter_of_LMSV_time_series}

\end{document}